\definecolor{darkred}{rgb}{0.7,0,0} 
\newcommand{\defn}[1]{{\color{darkred}\emph{#1}}} 
\newcommand{\red}[1]{{\color{red}#1}}
\newcommand{\ppp}[2][]{\xi_{#2}\ifthenelse{\equal{#1}{}}{}{^{#1}}}
\newcommand{\abs}[1]{\lvert #1 \rvert}
\newcommand{\ZZ}{\mathbb{Z}}
\newcommand{\Poi}{\mathrm{Poi}}
\newcommand{\Unif}{\mathrm{Unif}}
\renewcommand{\P}{\mathbf{P}}
\newcommand{\E}{\mathbf{E}}
\newcommand{\cov}{\mathop{\mathbf{Cov}}\nolimits}
\newcommand{\pppp}[1][]{\xi\ifthenelse{\equal{#1}{}}{}{^{#1}}}
\newcommand{\I}{\mathbf{I}}
\newcommand{\Ii}{\mathcal{I}}
\newcommand{\omitted}{\bullet}
\newcommand{\dtv}{d_{TV}}
\newcommand{\Z}{\mathbf{Z}}
\newcommand{\Y}{\mathbf{Y}}
\newcommand{\pp}[1]{\xi^{(n)}_{#1}}
\newcommand{\ppr}[1]{\overline{\xi}^{(n)}_{#1}}
\newcommand{\step}[2]{\par\medskip\par\noindent \textbf{Step #1}.
  \textit{#2}}
\newcommand{\1}{\mathbf{1}}
\newcommand{\toL}{\,{\buildrel d \over \longrightarrow}\,}
\newcommand{\Geo}{\mathrm{Geo}}
\newcommand{\Exp}{\mathrm{Exp}}
\newcommand{\B}{\mathcal{B}}
\begin{document}


  \section{Introduction}

\subsection{Fixed point forest}

Consider a deck of $n$ cards labeled $1,2,\ldots, n$ given in an arbitrary order. Take the top card and reinsert it
into the pile at the position of its value. This gives rise to a partial sorting algorithm, where the algorithm stops
when card 1 is on the top. This can be formulated in terms of the set $\mathfrak{S}_n$ of permutations of size
$n$. Each permutation $\pi$ represents an ordered deck of cards and can be written in one-line notation as
$\pi(1) \pi(2) \ldots \pi(n)$, where $\pi(i)$ is the value of the $i$-th card in the deck with $\pi(1)$ being the topmost
card. Then $\pi(1)$ is removed and reinserted into the position $\pi(1)$. For example, the permutation $3142
\in \mathfrak{S}_4$ goes to $1432$. This defines a graph with vertices being all permutations in $\mathfrak{S}_n$
and vertex $\pi$ being connected by an edge to vertex $\pi'$, if $\pi'$ is obtained from $\pi$ by moving the first entry
$\pi(1)$ to position $\pi(1)$. It was shown in~\cite{McKinley} that the graph is in fact a rooted forest, which we call $F_n$.
 A \defn{rooted forest} is a union of \defn{rooted trees}, where a tree is a graph that does not contain
any closed loops involving distinct vertices. Since later on we will consider trees with random permutations as roots,
we call each root of $F_n$ a \defn{base}. The bases of the forest are the permutations with $\pi(1)=1$,
and the leaves are the derangements, that is, permutations without fixed points. The partial sorting algorithm describes
a path from a permutation to a base.
Viewing the process in the opposite direction towards the leaves (i.e. vertices without incoming edge), one picks a fixed
point $\pi(i)=i$ of the permutation and moves it to the beginning, which we call \defn{bumping} the fixed point~$i$. For this reason, we call this forest the \defn{fixed point forest}.
Examples for $n=3$ and $n=4$ are given in Figures~\ref{figure.forest3} and~\ref{figure.forest4}, respectively.

\begin{figure}[t]
  \centering
\scalebox{0.8}{
\begin{tikzpicture}[>=latex,line join=bevel,]
\node (node_5) at (72.0bp,117.0bp) [draw,draw=none] {$321$};
  \node (node_4) at (18.0bp,63.0bp) [draw,draw=none] {$312$};
  \node (node_3) at (72.0bp,171.0bp) [draw,draw=none] {$231$};
  \node (node_2) at (72.0bp,63.0bp) [draw,draw=none] {$213$};
  \node (node_1) at (0.0bp,9.0bp) [draw,draw=none] {$132$};
  \node (node_0) at (45.0bp,9.0bp) [draw,draw=none] {$123$};
  \definecolor{strokecolor}{rgb}{0.0,1.0,0.0};
  \draw [strokecolor,->] (node_4) ..controls (26.154bp,46.691bp) and (31.523bp,35.953bp)  .. (node_0);
  \definecolor{strokecolor}{rgb}{0.0,1.0,0.0};
  \draw [strokecolor,->] (node_5) ..controls (72.0bp,101.01bp) and (72.0bp,90.861bp)  .. (node_2);
  \definecolor{strokecolor}{rgb}{0.0,0.0,1.0};
  \draw [strokecolor,->] (node_2) ..controls (63.846bp,46.691bp) and (58.477bp,35.953bp)  .. (node_0);
  \definecolor{strokecolor}{rgb}{0.0,0.0,1.0};
  \draw [strokecolor,->] (node_3) ..controls (72.0bp,155.01bp) and (72.0bp,144.86bp)  .. (node_5);
\end{tikzpicture}}
\caption{Fixed point forest $F_3$. \label{figure.forest3}}
\end{figure}
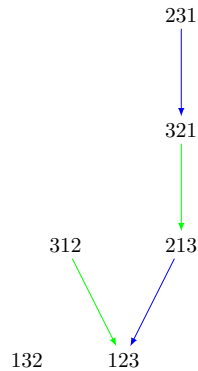

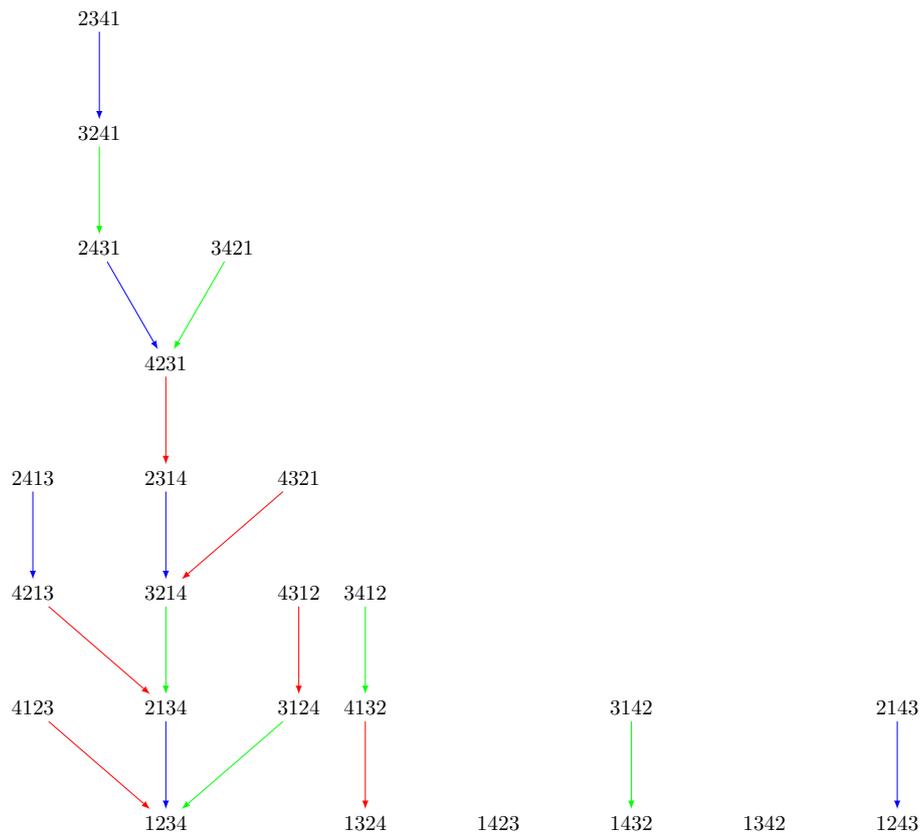
\begin{figure}[t]
\centering\scalebox{0.8}{
\begin{tikzpicture}[>=latex,line join=bevel,]
\node (node_22) at (146.0bp,117.0bp) [draw,draw=none] {$4312$};
  \node (node_23) at (146.0bp,171.0bp) [draw,draw=none] {$4321$};
  \node (node_20) at (22.0bp,117.0bp) [draw,draw=none] {$4213$};
  \node (node_21) at (84.0bp,225.0bp) [draw,draw=none] {$4231$};
  \node (node_9) at (53.0bp,387.0bp) [draw,draw=none] {$2341$};
  \node (node_8) at (84.0bp,171.0bp) [draw,draw=none] {$2314$};
  \node (node_7) at (425.0bp,63.0bp) [draw,draw=none] {$2143$};
  \node (node_6) at (84.0bp,63.0bp) [draw,draw=none] {$2134$};
  \node (node_5) at (301.0bp,9.0bp) [draw,draw=none] {$1432$};
  \node (node_4) at (239.0bp,9.0bp) [draw,draw=none] {$1423$};
  \node (node_3) at (363.0bp,9.0bp) [draw,draw=none] {$1342$};
  \node (node_2) at (177.0bp,9.0bp) [draw,draw=none] {$1324$};
  \node (node_1) at (425.0bp,9.0bp) [draw,draw=none] {$1243$};
  \node (node_0) at (84.0bp,9.0bp) [draw,draw=none] {$1234$};
  \node (node_19) at (177.0bp,63.0bp) [draw,draw=none] {$4132$};
  \node (node_18) at (22.0bp,63.0bp) [draw,draw=none] {$4123$};
  \node (node_17) at (115.0bp,279.0bp) [draw,draw=none] {$3421$};
  \node (node_16) at (177.0bp,117.0bp) [draw,draw=none] {$3412$};
  \node (node_15) at (53.0bp,333.0bp) [draw,draw=none] {$3241$};
  \node (node_14) at (84.0bp,117.0bp) [draw,draw=none] {$3214$};
  \node (node_13) at (301.0bp,63.0bp) [draw,draw=none] {$3142$};
  \node (node_12) at (146.0bp,63.0bp) [draw,draw=none] {$3124$};
  \node (node_11) at (53.0bp,279.0bp) [draw,draw=none] {$2431$};
  \node (node_10) at (22.0bp,171.0bp) [draw,draw=none] {$2413$};
  \definecolor{strokecolor}{rgb}{1.0,0.0,0.0};
  \draw [strokecolor,->] (node_22) ..controls (146.0bp,101.01bp) and (146.0bp,90.861bp)  .. (node_12);
  \definecolor{strokecolor}{rgb}{0.0,0.0,1.0};
  \draw [strokecolor,->] (node_7) ..controls (425.0bp,47.006bp) and (425.0bp,36.861bp)  .. (node_1);
  \definecolor{strokecolor}{rgb}{0.0,0.0,1.0};
  \draw [strokecolor,->] (node_10) ..controls (22.0bp,155.01bp) and (22.0bp,144.86bp)  .. (node_20);
  \definecolor{strokecolor}{rgb}{0.0,0.0,1.0};
  \draw [strokecolor,->] (node_9) ..controls (53.0bp,371.01bp) and (53.0bp,360.86bp)  .. (node_15);
  \definecolor{strokecolor}{rgb}{1.0,0.0,0.0};
  \draw [strokecolor,->] (node_18) ..controls (41.127bp,46.341bp) and (54.937bp,34.313bp)  .. (node_0);
  \definecolor{strokecolor}{rgb}{0.0,1.0,0.0};
  \draw [strokecolor,->] (node_17) ..controls (105.59bp,262.61bp) and (99.342bp,251.72bp)  .. (node_21);
  \definecolor{strokecolor}{rgb}{0.0,0.0,1.0};
  \draw [strokecolor,->] (node_6) ..controls (84.0bp,47.006bp) and (84.0bp,36.861bp)  .. (node_0);
  \definecolor{strokecolor}{rgb}{0.0,0.0,1.0};
  \draw [strokecolor,->] (node_11) ..controls (62.408bp,262.61bp) and (68.658bp,251.72bp)  .. (node_21);
  \definecolor{strokecolor}{rgb}{0.0,1.0,0.0};
  \draw [strokecolor,->] (node_12) ..controls (126.87bp,46.341bp) and (113.06bp,34.313bp)  .. (node_0);
  \definecolor{strokecolor}{rgb}{1.0,0.0,0.0};
  \draw [strokecolor,->] (node_23) ..controls (126.87bp,154.34bp) and (113.06bp,142.31bp)  .. (node_14);
  \definecolor{strokecolor}{rgb}{1.0,0.0,0.0};
  \draw [strokecolor,->] (node_20) ..controls (41.127bp,100.34bp) and (54.937bp,88.313bp)  .. (node_6);
  \definecolor{strokecolor}{rgb}{0.0,1.0,0.0};
  \draw [strokecolor,->] (node_15) ..controls (53.0bp,317.01bp) and (53.0bp,306.86bp)  .. (node_11);
  \definecolor{strokecolor}{rgb}{0.0,1.0,0.0};
  \draw [strokecolor,->] (node_14) ..controls (84.0bp,101.01bp) and (84.0bp,90.861bp)  .. (node_6);
  \definecolor{strokecolor}{rgb}{1.0,0.0,0.0};
  \draw [strokecolor,->] (node_19) ..controls (177.0bp,47.006bp) and (177.0bp,54.86bp)  .. (node_2);
  \definecolor{strokecolor}{rgb}{0.0,0.0,1.0};
  \draw [strokecolor,->] (node_8) ..controls (84.0bp,155.01bp) and (84.0bp,144.86bp)  .. (node_14);
  \definecolor{strokecolor}{rgb}{0.0,1.0,0.0};
  \draw [strokecolor,->] (node_13) ..controls (301.0bp,47.006bp) and (301.0bp,36.861bp)  .. (node_5);
  \definecolor{strokecolor}{rgb}{0.0,1.0,0.0};
  \draw [strokecolor,->] (node_16) ..controls (177.0bp,101.01bp) and (177.0bp,90.861bp)  .. (node_19);
  \definecolor{strokecolor}{rgb}{1.0,0.0,0.0};
  \draw [strokecolor,->] (node_21) ..controls (84.0bp,209.01bp) and (84.0bp,198.86bp)  .. (node_8);
\end{tikzpicture}}
\caption{Fixed point forest $F_4$. \label{figure.forest4}}
\end{figure}

The goal of this paper is to understand the local structure of this forest as $n\to\infty$. Put differently,
we would like to describe the neighborhood of a ``typical'' permutation in the forest for large $n$.
We carry this out in Theorem~\ref{thm:lwc}, where we find the limit of the forest as $n\to\infty$
in the sense of \defn{local weak convergence},
as defined in Section~\ref{sec:lwc.def} (see \cite{AS} for more background). The limit is a random tree
that we construct from an infinite collection of independent Poisson point processes. The
proof relies on Stein's method for Poisson approximation.
The limit tree seems to have interesting properties; see
Section~\ref{sec:comparison} for comparisons to other processes. As a corollary of this local
weak convergence,
we show that the distributions of the distance from a random permutation to the farthest and nearest
leaves descending from it converge, respectively, to a geometric distribution with
mean $e-1$ and a Poisson distribution with mean $1$.
With some additional work, we prove that the higher moments
are bounded and hence the expectations converge as well (see Theorem~\ref{thm:limits}).

We were first made aware of the fixed point forest by Gwen McKinley who studied it in her undergraduate
thesis~\cite{McKinley}, which she wrote under the guidance of the second author. McKinley was introduced to the
problem at the Missouri State University summer REU program by Les Reid to whom the process was suggested
by Gerhardt Hinkle. In her thesis, McKinley investigated several global properties of the forest, finding
for instance that the longest path to the base is of length $2^{n-1}-1$.
Despite the simplicity of the description of the fixed point forest, many basic questions about it
seem difficult and remain unanswered. We discuss some of them in more detail in Section~\ref{sec:further}.
Both locally and globally, the structure of the fixed point forest seems quite rich.

In the remainder of the introduction, we provide a non-technical discussion of the local structure of the forest $F_n$
in the limit $n\to \infty$. Let $\pi_n$ be a uniformly random permutation in $F_n$.
The essential information for determining the neighborhood of $\pi_n$ is the location
of its fixed and near-fixed points. Our idea for understanding the limiting structure of $F_n$
is to construct a sort of limit of the fixed and near-fixed points of $\pi_n$. By rescaling by
a factor of $1/n$, these are represented as Poisson point processes on $[0,1]$.
Then, we define a tree from these point processes, which will turn out to be the local limit of $F_n$
(see Section~\ref{sec:local.weak.convergence}).

  \subsection{Moving towards leaves in the tree}
  Suppose that $\pi$ is a permutation and that we would like to enumerate its descendants up to three
  levels in the forest; that is, we want to determine all permutations obtained from $\pi$ by bumping
  fixed points to the beginning no more than three times.
  What information about $\pi$ do we need?

  The answer is that we must know all $i$ such that $\pi(i)=i$, $\pi(i)=i+1$, or $\pi(i)=i+2$.
  This is best seen by example. To unify our terminology, say that the letter~$\pi(i)$ or the position~$i$
  is \defn{$k$-separated} if $\pi(i)=i+k$. A $0$-separated letter is simply a fixed point.

  \begin{example}\label{ex:1}
    Suppose that $\pi$ has $0$-separated letters at positions~$7$ and~$27$, that it has
    a $1$-separated letter at $18$, and that it has a $2$-separated letter at $13$.
    Then $\pi$ has two children in the forest, given by bumping the letters in either position~$7$ or~$27$.

    If the letter in position~$7$ is bumped, then the resulting permutation still has a $0$-separated letter
    at $27$, a $1$-separated letter at $18$, and a $2$-separated letter at $13$. From here, one child
    is given by bumping the letter in position~$27$. Then, this turns the $1$-separated letter at position $18$ into a
    $0$-separated letter at $19$, and it turns the $2$-separated letter at $13$ into a $1$-separated
    letter at $14$. From here, there is a child given by bumping the letter in position~$19$ (as well as another
    child after that).

    If the letter in position~$27$ is bumped in the first step, then the $0$-separated letter at $7$ is destroyed,
    the $1$-separated letter at $18$ becomes a $0$-separated letter at $19$, and the $2$-separated
    letter becomes a $1$-separated letter at $14$. Now there is a child given by bumping the letter in position~$19$,
    which turns the $1$-separated letter at $14$ into a $0$-separated letter at $15$, which can then be bumped.
  \end{example}

  This example suggests a way to encode a permutation according to its $k$-separated letters
  for $k=0,\ldots,K$. We create a word containing a $k$ for each $k$-separated position in the permutation,
  in the order that they appear.
  For example, with $K=2$, the word corresponding to $\pi$ in Example~\ref{ex:1} is
  $0210$. When we bump the $0$-separated letter at a given position, we remove the $0$ from this
  position, subtract $1$ from all letters previous to the bumped position, and we leave alone all letters after the
  bumped position. We write a $-k$ letter as $\red{\bar{k}}$, that is, barred and colored red.
  These negative letters are irrelevant for determining the descendants
  of a permutation, but we leave them in the word for consistency with the next section.
  Thus bumping the first $0$ from $0210$ yields $210$,
  and bumping the second $0$ yields $\red{\bar{1}}10$. This gives us the compact
  depiction of the descendants of $\pi$ in Figure~\ref{fig:ex1}.

  We warn the reader that this picture is incomplete in one way. When a $0$-separated letter at position~$i$
  is bumped, it creates a new $(i-1)$-separated letter at position~$1$.
  In Example~\ref{ex:1}, this makes no difference, but in the following example it does.
  \begin{figure}
    \begin{center}
      \begin{tikzpicture}[yscale=0.9]
        \path (0,0) node (pi) {$0210$}
              (-1,1) node (c1) {$210$}
              (1,1) node (c2) {$\red{\bar{1}}10$}
              (-1,2) node (c11) {$10$}
              (-1,3) node (c111) {$0$}
              (1,2) node(c21) {$\red{\bar{2}}0$}
              (1,3) node (c211) {$\red{\bar{3}}$}
            ;
            \draw (pi.north)+(-.38,0)--(c1) (pi.north)+(.4,0)--(c2)
                  (c1.north)+(.13,0)--(c11) (c11.north)+(.07,0)--(c111) (c2.north)+(.13,0)--(c21)
                  (c21.north)+(0.07,0)--(c211)
              ;
      \end{tikzpicture}
    \end{center}
    \caption{The descendants of $\pi$ from Example~\ref{ex:1},
    encoded by their words. Only three levels are given here, since we are using
    the information only from the $0$-, $1$-, and $2$-separated letters.
    To construct a tree like this given only the encoding of $\pi$ by its word,
    when we bump the $0$-separated letter at a given position,
    we remove the $0$ from this position and subtract $1$ from all
    letters previous to the bumped position. A value of $-k$ is indicated here
    by $\red{\bar{k}}$, that is, barred and colored red.}\label{fig:ex1}
  \end{figure}
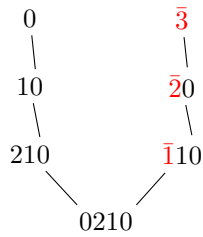

  \begin{example}
    Suppose that $\pi$ is the permutation $42135$, in one-line notation.
    Then $\pi$ has $0$-separated letters at locations $2$ and $5$, and it has no $1$- or $2$-separated
    letters. If $5$ is bumped, the resulting permutation
    has no $0$-separated letters and hence no children.
    If $2$ is bumped, then we still have a $0$-separated letter at $5$.
    We also have a new $1$-separated letter at position~$1$, namely the $2$ that was just bumped.
    Thus, if $5$ is bumped, we can then bump the $2$ again.

    If we start with the word $00$ corresponding to $42135$ and then follow the rules laid out
    before this example for manipulating these words, we miss this last descendant
    (see Figure~\ref{fig:ex2}).
  \end{example}
  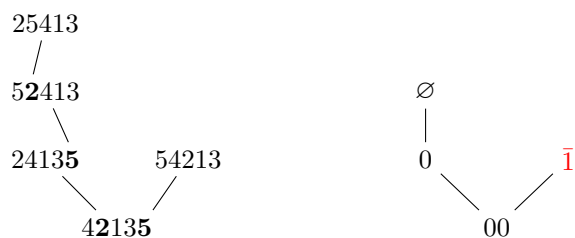
\begin{figure}
    \begin{center}
      \begin{tikzpicture}[xscale=1.25,yscale=0.9]
        \begin{scope}
          \path (0,0) node (pi) {$4\mathbf{2}13\mathbf{5}$}
              (-0.75,1) node (c1) {$2413\mathbf{5}$}
              (0.75,1) node (c2) {$54213$}
              (-0.75,2) node (c11) {$5\mathbf{2}413$}
              (-0.75,3) node (c111) {$25413$}
            ;
          \draw (pi.north)+(-.22,0)--(c1) (pi.north)+(.38,0)--(c2)
                  (c1.north)+(.24,0)--(c11) (c11.north)+(-.13,0)--(c111);
        \end{scope}
        \begin{scope}[shift={(4,0)}]
          \path (0,0) node (pi) {$00$}
              (-0.75,1) node (c1) {$0$}
              (0.75,1) node (c2) {$\red{\bar{1}}$}
              (-0.75,2) node (c11) {$\varnothing$}
            ;
            \draw (pi)--(c1) (pi)--(c2)
                  (c1)--(c11);
        \end{scope}

      \end{tikzpicture}
    \end{center}

    \caption{On the left, we depict the descendants of the permutation $42135$ in the forest.
    Fixed points are in bold.
    On the right, we show the apparent forest computed only using the words giving the order
    of the $k$-separated letters. This misses a descendant created by the ``reentry'' of $2$ as a fixed
    point.}\label{fig:ex2}
  \end{figure}

  \label{page:reentry}
  This possible ``reentry''  of a bumped letter as a fixed point complicates the picture.
  When we determine the descendants of a permutation~$\pi$ up to level~$K$, this reentry can only occur
  if there is a $0$-separated letter at one of positions~$1,\ldots,K-1$. For fixed~$K$, this is vanishingly
  unlikely as $n\to\infty$, and so in constructing the limit tree, we ignore it.
  The essential idea to this construction is to take a random word on the alphabet $0,\ldots,K$
  and make a tree by the procedure described in
  Figure~\ref{fig:ex1}. In constructing the limit tree, one complication is that we want $K$ to
  be infinite. To address this, we represent the locations of $k$-separated letters for each $k$
  in an abstracted permutation~$\pi$ as a point process $\ppp[\pi]{k}$,
  representing a set of locations on the interval~$[0,1]$.
  These will be \defn{independent Poisson point processes} with intensity one in the limit.
  For any fixed~$K$, we can then obtain a string by writing a $k$ for each point of $\ppp[\pi]{k}$
  for $0\leq k< K$, sorted by the positions of the points in $[0,1]$. This string then determines
  a tree up to level~$K$ following the procedure sketched out after Example~\ref{ex:1}.
  \begin{example}
    Suppose $\ppp[\pi]{0}$, $\ppp[\pi]{1}$, and $\ppp[\pi]{2}$ contain points as depicted in
    Figure~\ref{fig:ex3}. Then the word associated with the abstracted permutation~$\pi$ for $K=2$
    is $0120$, and the tree generated by it up to three levels is the same one as in Example~\ref{ex:1}
    and Figure~\ref{fig:ex1}.
  \end{example}
  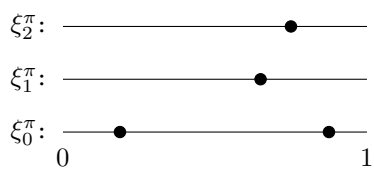
\begin{figure}
    \begin{center}
      \begin{tikzpicture}[vert/.style={circle,fill,inner sep=0,
            minimum size=0.15cm,draw},yscale=.7]
        \draw[{[-]}] (0,0)--(4,0);
        \draw[{[-]}] (0,1)--(4,1);
        \draw[{[-]}] (0,2)--(4,2);
        \path (.75,0) node[vert] {}
              (3.5, 0) node[vert] {}
              (2.6, 1) node[vert] {}
              (3, 2) node[vert] {};
        \path (0,0) node [label=left:{$\ppp[\pi]{0}$:},label=below:$0$] {}
              (4,0) node[label=below:$1$] {}
              (0,1) node [label=left:{$\ppp[\pi]{1}$:}] {}
              (0,2) node [label=left:{$\ppp[\pi]{2}$:}] {};
      \end{tikzpicture}
    \end{center}
    \caption{The point processes $\ppp[\pi]{0}$, $\ppp[\pi]{1}$, and $\ppp[\pi]{2}$ on the interval
    $[0,1]$, representing the $0$-, $1$-, and $2$-separated letters in the abstracted permutation~$\pi$.
    The word associated with this permutation for $K=2$ is $0120$, and the tree of its descendants
    up to three levels is the same as in Figure~\ref{fig:ex1}.}
    \label{fig:ex3}
  \end{figure}

  One can also construct the tree directly from the point processes, without the intermediate step
  of converting to a word. The abstracted permutation~$\pi$ with associated point processes
  $\ppp[\pi]{0},\ppp[\pi]{1},\ldots$ has $\abs{\ppp[\pi]{0}}$ children, with $\abs{\ppp[\pi]{0}}$ denoting
  the number of points in $\ppp[\pi]{0}$. The point processes obtained by bumping a point $x\in[0,1]$
  of $\ppp[\pi]{0}$ are given
  by removing the point $x$ from $\ppp[\pi]{0}$, and by ``shifting down'' each point process
  on $[0,x)$ as in Figure~\ref{fig:down.shift}.\looseness=1
  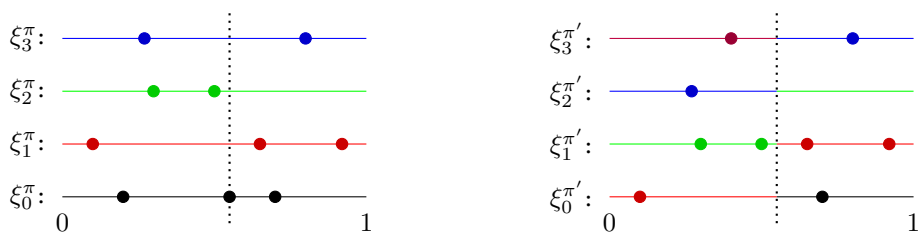
\begin{figure}\centering
      \begin{tikzpicture}[vert/.style={circle,fill,inner sep=0,
            minimum size=0.15cm,draw},yscale=.7,xscale=4]
        \begin{scope}
          \draw[{[-]}] (0,0)--(1,0);
          \draw[{[-]}, red] (0,1)--(1,1);
          \draw[{[-]}, green] (0,2)--(1,2);
          \draw[{[-]},blue] (0,3)--(1,3);
          \draw[dotted,thick] (.55,-.5) -- (.55,3.5);
          \path (.2,0) node[vert] {} (.55,0) node[vert] {};
          \path (.8,3) node[vert,blue!80!black] {} (.92,1) node[vert,red!80!black] {}
                (.7,0) node[vert] {} (.65,1) node[vert,red!80!black] {};
          \path (.1,1) node[vert,red!80!black] {} (.3,2) node[vert,green!80!black] {}
                (.5,2) node[vert,green!80!black] {}  (.27,3) node[vert,blue!80!black] {};
          \path (0,0) node [label=left:{$\ppp[\pi]{0}$:},label=below:$0$] {}
                (1,0) node[label=below:$1$] {}
                (0,1) node [label=left:{$\ppp[\pi]{1}$:}] {}
                (0,2) node [label=left:{$\ppp[\pi]{2}$:}] {}
                (0,3) node [label=left:{$\ppp[\pi]{3}$:}] {};
        \end{scope}
        \begin{scope}[shift={(1.8,0)}]
          \draw[{[-}, red] (0,0)--(.55,0);
          \draw[{-]}] (.55,0)--(1,0);
          \draw[{[-}, green] (0,1)--(.55,1);
          \draw[{-]}, red] (.55,1)--(1,1);
          \draw[{[-},blue] (0,2)--(.55,2);
          \draw[{-]}, green] (.55,2)--(1,2);
          \draw[{[-},purple] (0,3)--(.55,3);
          \draw[{-]},blue ] (.55,3)--(1,3);
          \draw[dotted,thick] (.55,-.5) -- (.55,3.5);
          \path (.1,0) node[vert,red!80!black] {} (.3,1) node[vert,green!80!black] {}
                (.5,1) node[vert,green!80!black] {}  (.27,2) node[vert,blue!80!black] {};
          \path (.4,3) node[vert,purple!80!black] {};
          \path (.8,3) node[vert,blue!80!black] {} (.92,1) node[vert,red!80!black] {}
                (.7,0) node[vert] {} (.65,1) node[vert,red!80!black] {};
          \path (0,0) node [label=left:{$\ppp[\pi']{0}$:},label=below:$0$] {}
                (1,0) node[label=below:$1$] {}
                (0,1) node [label=left:{$\ppp[\pi']{1}$:}] {}
                (0,2) node [label=left:{$\ppp[\pi']{2}$:}] {}
                (0,3) node [label=left:{$\ppp[\pi']{3}$:}] {};
        \end{scope}
      \end{tikzpicture}
    \caption{If $\pi$ is an abstracted permutation and $\pi'$ is its child given by bumping
    the middle point $x$ in $\ppp[\pi]{0}$, then the point processes $\ppp[\pi']{k}$ is equal
    to $\ppp[\pi]{k+1}$ on $[0,x)$ and is equal to $\ppp[\pi]{k}$ on $(x,1]$, as depicted
    above.}\label{fig:down.shift}
  \end{figure}

  \subsection{Moving towards the base in the tree}
  In the previous section, we gave a loose account of how to define the descendants of an abstracted
  permutation~$\pi$ in the limit tree. We need to define the entire tree, however, which includes
  the ancestors of $\pi$ and their descendants.

  Suppose $\pi$ is a (non-abstracted) permutation, and we would like to determine
  both how many children and how many siblings it has in the forest. Again, we ask the question
  of what information about $\pi$ we need to find this out.

  As before, we need to know the locations of $0$-separated letters in $\pi$. We also need to know
  the locations of $-1$-separated letters, which can become $0$-separated letters in the parent of $\pi$.
  Finally, we need to know the value of $\pi(1)$, as this determines the ancestor of $\pi$ in the forest.

  \begin{example}\label{ex:4}
    Let $\pi$ be the permutation from Example~\ref{ex:1}, which has
    $0$-separated letters at positions~$7$ and $27$. Suppose that
    it has $-1$-separated letters at $15$ and $36$, and suppose that
    $\pi(1)=20$.

    As before, $\pi$ has two children in the forest, given by bumping positions~$7$ and $27$.
    When we move towards the base in the tree to the parent of $\pi$, the $0$-separated letter at position~$7$
    becomes a $1$-separated letter at position~$6$, and the $-1$-separated letter at position~$15$ becomes
    a $0$-separated letter at position~$14$, while the separated letters after position~$20$ remain
    the same. The permutation also has a new $0$-separated letter at position~$20$, which if bumped
    leads to $\pi$.  Thus the parent of $\pi$ has three  children total, and $\pi$ has two siblings.
  \end{example}

  Again, we can view this in terms of words encoding the $k$-separated letters.
  We can view the permutation~$\pi$ of Example~\ref{ex:4} as the word $02\red{\bar{1}}1|0\red{\bar{1}}$,
  with the $|$ symbol specifying the value of $\pi(1)$. When moving towards the base in the tree,
  a $0$ is inserted in the position of the $|$ symbol, and all values to the left of the $|$ are incremented.
  See Figure~\ref{fig:ex4} for a depiction of Example~\ref{ex:4} in these terms. As before, this picture is incomplete:
  if the first character in the word corresponds to a separated letter at position~$1$, this character is
  deleted rather than incremented when moving towards the base in the tree. This will be irrelevant
  in the limit, since for a fixed~$K$ and $r$, a random permutation is vanishingly unlikely
  to have a $k$-separated letter with $\abs{k}\leqslant K$ occurring in the first $r$
  positions.\looseness=1

  The extra ingredient in moving towards the base in the tree rather than towards the leaves is knowledge of $\pi(1)$.
  Looking back at Figure~\ref{fig:down.shift}, to go backward from $\pi'$ to $\pi$ in the limit
  tree, we need the location of the dotted line, which cannot be determined from
  $\ppp[\pi']{k}$. In the limit case, these locations will be uniform over $[0,1]$ and independent
  of the point processes.

  \begin{figure}
    \begin{center}
      \begin{tikzpicture}
        \path (0,0) node (pi) {$02\red{\bar{1}}1|0\red{\bar{1}}$}
              (-.75,1) node (c1) {$|2\red{\bar{1}}10\red{\bar{1}}$}
              (.75,1) node (c2) {$\red{\bar{1}}1\red{\bar{2}}0|\red{\bar{1}}$}
              (0,-1) node (parent) {$130200\red{\bar{1}}$}
              (1.5,0) node (sib2) {$02\red{\bar{1}}1\red{\bar{1}}|\red{\bar{1}}$}
              (-1.5,0) node(sib1) {$02|200\red{\bar{1}}$}
            ;
            \draw (parent.north)+(.45,0)--(sib2)
                  (parent.north)+(.15,0)--(pi)
                  (parent.north)+(-.27,0)--(sib1)
                  (pi.north)+(-.55,0)--(c1)
                  (pi.north)+(.37,0)--(c2)
              ;
      \end{tikzpicture}
    \end{center}
    \caption{The children, parent, and siblings of the permutation~$\pi$
    given in Example~\ref{ex:4}.}\label{fig:ex4}
  \end{figure}
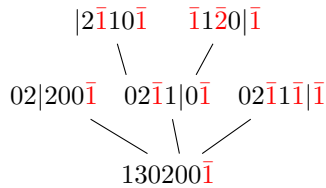

  \subsection{Comparison to other known processes}  \label{sec:comparison}

  It is a natural question to compare the fixed point forest or its subtrees to other known random trees, such
  as the Galton--Watson tree~\cite{GaltonWatson}. The tree component of the fixed point forest containing the
  identity permutation has approximately $(n-1)!$ vertices. As we discussed earlier, the height of this
  component is $2^{n-1}-1$ as shown in~\cite{McKinley}. The Galton--Watson tree on the other hand has
  height $\sqrt{N}$ if the tree has $N$ vertices~\cite{Aldous.1991}, which is much larger than $2^{n-1}$ when
  $N=(n-1)!$. This is consistent with the fact that the fixed point tree has offspring sizes that are correlated across
  generations, whereas the Galton--Watson tree has independent siblings.
  The fixed point forest locally also is quite different from a Galton--Watson tree. For example,
  no leaf in the fixed point forest has a sibling that is also a leaf.

  The process of picking a fixed point at random and moving it to the front (which corresponds to a walk to
  a leaf) has some resemblance to the Tsetlin library~\cite{Tsetlin, Hendricks.1972, Hendricks.1973}, which is a
  model for the evolution of an arrangement of books in a library shelf over time. It is a Markov chain on
  permutations, where the entry in the $i$-th position is moved to the front with probability $p_i$. However,
  in the fixed point forest this process eventually stops when a derangement is reached, whereas in the Tsetlin
  library the process can go on arbitrarily long.

  \section{The limiting objects}

  In this section, we provide the precise definition of the limiting tree.

  \subsection{Local weak convergence in general}\label{sec:lwc.def}
  The main result of this paper is the \defn{local weak convergence} of the fixed point forest to
  a certain limiting tree. This mode of convergence is sometimes called \defn{Benjamini--Schramm convergence}
  after the paper \cite{BS}. We will give a short introduction to local weak convergence now,
  but see \cite[Section~2]{AS} for a more in depth discussion.

  Let $G, G_1,G_2,\ldots$ be a sequence of random rooted graphs.
  For any rooted graph $H$, $H(r)$ denotes the $r$-ball around the root of $H$; that is,
  $H(r)$ is the subgraph of $H$ induced by all vertices at distance~$r$ or less from
  the root. We write $H\cong H'$ to signify that
  $H$ and $H'$ are isomorphic as rooted graphs.
  We say that $G$ is the \defn{local weak limit} of $G_n$ if for every $r\geq 0$ and every finite
  graph $H$,
  \begin{align*}
    \P[G_n(r) \cong H] \to \P[G(r) \cong H]
  \end{align*}
  as $n\to\infty$. Roughly speaking, this says that the view from the root of $G_n$ resembles the view from
  the root of $G$ in distribution more and more as $n\to\infty$.
  Frequently, $G_n$ is a finite, deterministic graph with its root chosen uniformly at random,
  as will be the case in this paper.

  \subsection{Construction of the limit tree}

  The ingredients of our construction are a collection of Poisson point processes
  $(\ppp[\rho]{k})_{k\in\ZZ}$  of unit intensity on $[0,1]$ and a sequence $U_1,U_2,\ldots$ of
  independent $\Unif[0,1]$ random variables.
  Formally, a \defn{point process} is an integer-valued random measure on the Borel sets of $\mathbb{R}$.
  One should think of it as a random collection of points, represented as the atoms
  of the measure.
  A \defn{Poisson point process} $\xi$ of unit intensity on $[0,1]$ is characterized by two
  properties: First, for any interval of length~$x$, the number of points of $\xi$ in
  the interval is distributed as $\Poi(x)$. Second, the numbers of points of $\xi$ in
  disjoint intervals are independent.
  We use the terminology \defn{point process configuration} to mean a deterministic
  collection of points, also represented formally as a measure.

  The point process $\ppp[\rho]{k}$ represents the
  $k$-separated letters in the abstracted permutation~$\rho$.
  Let $\rho_1$ be the parent of $\rho_0=\rho$, let $\rho_2$ be the parent of $\rho_1$, and so on.
  The random variable $U_i$ represents
  the $0$-separated letter in $\rho_i$ that was bumped to create $\rho_{i-1}$.

  To construct the tree, we first define maps corresponding to moving forwards and backwards
  from a given vertex.
  \begin{definition}[The forward map $f$]\label{def:forward.map}
    Let $\xi=(\xi_k)_{k\in\ZZ}$ be a collection of point process configurations on $[0,1]$.
    For any atom~$x$ of $\xi_0$, let $f(\xi,x)=(\xi'_k)_{k\in\ZZ}$, where
    \begin{align*}
      \xi_k' &= \xi_{k+1}\big\rvert_{[0,x)} + \xi_k\big\rvert_{(x,1]}.
    \end{align*}
    This is the down-shift operation depicted in Figure~\ref{fig:down.shift}, corresponding
    to moving forwards towards a leaf in the tree from a permutation to one of its children by bumping
    the abstracted fixed point~$x$.
  \end{definition}
  \begin{definition}[The backward map $b$]\label{def:backward.map}
    Let $\xi=(\xi_k)_{k\in\ZZ}$ be a collection of point process configurations on $[0,1]$.
    For any $u\in[0,1]$, let $b(\xi,u)=(\xi'_k)_{k\in\ZZ}$, where
    \begin{align*}
      \xi'_0 &=
       \xi_{-1}\bigr\rvert_{[0,u)} + \xi_0\bigr\rvert_{(u,1]}
                           + \delta_{u},\\
      \xi'_k&= \xi_{k-1}\bigr\rvert_{[0,u)} + \xi_{k}\bigr\rvert_{(u,1]}\qquad \qquad\text{for $k\neq 0$}.
      \end{align*}
    This is the reverse of the forward map, in the following sense:
    if $x$ is an atom of $\xi_0$ and $f(\xi,x)=\xi'$, then $b(\xi',x)=\xi$.
  \end{definition}

  Next, we define a tree by applying $f$ and $b$ to map out the abstracted permutations.

  \begin{definition}\label{def:varphi}
    Given point process configurations $\pppp[\rho]=(\ppp[\rho]{k})_{k\in\ZZ}$ and a sequence
    $u=(u_1,u_2,\ldots)$ of elements of $[0,1]$, we construct a rooted tree
    $\varphi(u,\pppp[\rho])$ as follows.
    We think of each vertex~$v$ of this tree as an abstracted permutation, represented
    by a collection of point processes $\pppp[v]=(\ppp[v]{k})_{k\in\ZZ}$.
    Let $\rho=\rho_0$ be the root of the tree.
    First, we give $\rho_0$ an infinite chain of ancestors $\rho_1,\rho_2,\ldots$.
    Starting with $\pppp[\rho_0]$, which is given to us, we inductively define
    \begin{align*}
      \pppp[\rho_{i+1}] = b\bigl(\pppp[\rho_i],\, u_{i+1}\bigr).
    \end{align*}

    Next, we construct descendants of each $\rho_i$.
    For every atom~$x$ in $\ppp[\rho_i]{0}$ other than $u_i$, give $\rho_i$ a child~$\rho_i(x)$ and
    define
    \begin{align*}
      \pppp[\rho_i(x)] &= f\bigl(\pppp[\rho_i],\,x\bigr).
    \end{align*}
    for all $k$.
    (We avoid doing this with $x=u_i$ since this would just recreate $\rho_{i-1}$.)
    From here on, we proceed inductively, continuing to extend the tree forwards.
    Suppose that $\pppp[v]$ has already been defined.
    For each atom~$x$ in $\ppp[v]{0}$, extend the tree by creating a child~$v(x)$ of $v$, and define
    \begin{align*}
      \pppp[v(x)] &= f\bigl(\pppp[v],\,x\bigr).
    \end{align*}

    We define $\varphi(u,\pppp[\rho])$ as the resulting tree.
    Also, observing that the $r$-neighborhood of the root of the tree depends only
    on $\bigl(\ppp[\rho]{-r+1},\ldots,\ppp[\rho]{r-1}\bigr)$ and on $u_1,\ldots,u_r$, define
    the map $\varphi_r$ by setting
    $\varphi_r\bigl(u_1,\ldots,u_r; \ppp[\rho]{-r+1},\ldots,\ppp[\rho]{r-1}\bigr)$ as the $r$-neighborhood
    of the root of $\varphi(u,\pppp[\rho])$.
  \end{definition}

  Finally, we construct the limit tree $T$.
  \begin{definition}\label{def:T}
    Define $T = \varphi(U,\pppp[\rho])$,
    where $U=(U_1,U_2,\ldots)$ consists of independent $\Unif[0,1]$
    random variables, and
    $\pppp[\rho]=(\ppp[\rho]{k})_{k\in\ZZ}$ consists of independent Poisson point processes
    on $[0,1]$ with unit intensity, and $\pppp[\rho]$ and $U$ are independent of each other.
  \end{definition}

  In the following section, we will prove that $T$ is the local limit of $F_n$ as $n\to\infty$.

  \section{Local weak convergence of the fixed point tree}
  \label{sec:local.weak.convergence}
  Recall that~$i$ is a $k$-separated position in a permutation~$\pi$ if $\pi(i)=i+k$.
  The main idea in this section is that the neighborhood of a permutation $\pi$ in $F_n$ up to distance~$r$
  can typically be reassembled from two pieces of information: the $k$-separated positions
  in $\pi$ for $-r< k< r$, and the values of $\pi(1),\ldots,\pi(r)$. The first piece lets us
  work out the tree forwards (towards leaves) from $\pi$. When we rescale by $1/n$, these locations
  converge to independent Poisson point processes on $[0,1]$.
  The second piece of information lets us move backwards in the tree. With the same rescaling, these random
  variables converge to independent points sampled uniformly from $[0,1]$.
  The two pieces of information converge jointly, as shown in Proposition~\ref{prop:joint_convergence},
  and the weak local convergence of the fixed point tree
  $F_n$ follows easily from this in Theorem~\ref{thm:lwc}.

  While this convergence is what one would expect from well known Poisson approximations
  of fixed points of random permutations (see \cite[Theorem~11]{ChatterjeeDiaconisMeckes.2005},
  for example), it will take some technical work to prove our precise statement. We will use Stein's method
  via size-bias couplings using the framework from \cite{BHJ}, which we introduce now.
  See also \cite[Section~4.3]{Ross} for a more detailed introduction to size-bias couplings.
  The general idea for our purposes is that we have a collection of 0-1 random
  variables, and we would like to show that they are well approximated by independent Poisson
  random variables. If there exist certain couplings described below,
  Stein's method gives a quantitative version of this approximation.
  The bound is given in terms of the covariances of the random variables and does not
  depend on the couplings, once they are shown to exist.

  \begin{condition}
    Let $\I=(I_\alpha)_{\alpha\in\Ii}$ be a collection of 0-1 random variables.
    For each $\alpha\in\Ii$, there is a random vector
    $J_{\omitted\alpha}=(J_{\beta\alpha})_{\beta\in\Ii}$ coupled with $\I$ such that
    \begin{itemize}
      \item $J_{\omitted\alpha}$ is distributed as $\I$ conditioned on $I_\alpha=1$;
      \item we can partition $\Ii$ into disjoint sets
        \begin{align*}
          \Ii = \Ii_\alpha^+\cup \Ii_\alpha^-\cup\{\alpha\}
        \end{align*}
        such that with probability one,
        \begin{align}
          J_{\beta\alpha}&\leqslant I_\beta\quad\text{if $\beta\in\Ii_\alpha^-,$}\label{eq:monotone-}\\
          J_{\beta\alpha}&\geqslant I_\beta\quad\text{if $\beta\in\Ii_\alpha^+$.}\label{eq:monotone+}
        \end{align}
    \end{itemize}
  \end{condition}
  When this condition holds,
  one can estimate the distance between $\I$ and a vector of independent
  Poisson random variables in terms of the covariances of the components of $\I$, with no
  mention of the coupling.
  Here,
  as usual, the covariance between two random variables $X$ and $Y$ is
  \[
     \cov(X,Y) = \E[XY] - \E[X]\E[Y].
  \]
  The bound is on the \defn{total variation distance} between the laws of the random vectors.
  For random variables $X$ and $Y$ taking values in some space~$\mathcal{S}$, this distance is defined as
  \begin{align*}
    \dtv(X,Y) = \sup_{S\subseteq\mathcal{S}}\abs{\P[X\in S] - \P[Y\in S]}.
  \end{align*}
  Another characterization of the total variation distance between the laws of $X$ and $Y$
  is as the minimum of $\P[X\neq Y]$ over all couplings of $X$ and $Y$; see \cite[Section~A.1]{BHJ}.

  \begin{proposition}[Corollary~10.J.1 in \cite{BHJ}]\label{prop:BHJ}
  Assume the coupling condition.
  Let $\Y=(Y_\alpha)_{\alpha\in\Ii}$ be a vector of independent
  Poisson random variables with $\E Y_\alpha=\E I_\alpha$.  Then
  \begin{align}
    \dtv(\I, \Y)&\leqslant \sum_{\alpha\in\Ii}(\E I_\alpha)^2
      +\sum_{\alpha\in\Ii}\sum_{\smash{\beta\in\Ii_\alpha^-}}
      \abs{\cov(I_\alpha,I_\beta)}
      +\sum_{\alpha\in\Ii}\sum_{\smash{\beta\in\Ii_\alpha^+}}\cov(I_\alpha,I_\beta).
      \label{eq:BHJ}
  \end{align}
\end{proposition}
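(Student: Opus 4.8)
The plan is to prove this by Stein's method for Poisson approximation, in the generator (semigroup) formulation of Barbour. Encode $\I$ and $\Y$ as point configurations $\Xi=\sum_{\alpha\in\Ii}I_\alpha\delta_\alpha$ and $\Z=\sum_{\alpha\in\Ii}Y_\alpha\delta_\alpha$; since subsets of $\ZZ^{\Ii}$ with nonnegative coordinates correspond bijectively to sets of configurations, it suffices to bound $\abs{\P[\Xi\in S]-\P[\Z\in S]}$ uniformly over all sets $S$ of configurations. Fix such an $S$. Let $\mathcal A$ be the generator of the multivariate immigration--death process with independent coordinates, immigration rate $\lambda_\alpha:=\E I_\alpha$ at site $\alpha$, and unit per-capita death rate,
\[
  \mathcal A g(\mathbf x)=\sum_{\alpha\in\Ii}\lambda_\alpha\bigl(g(\mathbf x+e_\alpha)-g(\mathbf x)\bigr)+\sum_{\alpha\in\Ii}x_\alpha\bigl(g(\mathbf x-e_\alpha)-g(\mathbf x)\bigr),
\]
whose stationary law is that of $\Z$, and let $g=g_S$ be the solution of the Stein equation $\mathcal A g_S=\1_S-\P[\Z\in S]$ obtained by integrating $\1_S-\P[\Z\in S]$ along the semigroup of this process. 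Evaluating at $\Xi$ and taking expectations gives $\P[\Xi\in S]-\P[\Z\in S]=\E[\mathcal A g_S(\Xi)]$, so the task reduces to bounding $\E[\mathcal A g_S(\Xi)]$.

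The second step is the decoupling, which is where the coupling condition enters. Since each $I_\alpha$ is $\{0,1\}$-valued, the death term of $\E[\mathcal A g_S(\Xi)]$ can be rewritten as $-\sum_\alpha\E\bigl[I_\alpha\,\Delta_\alpha g_S(\Xi-e_\alpha)\bigr]$ with $\Delta_\alpha g(\mathbf x):=g(\mathbf x+e_\alpha)-g(\mathbf x)$; conditioning on $\{I_\alpha=1\}$ and using that $J_{\omitted\alpha}$ is distributed as $\I$ given $I_\alpha=1$ (so $J_{\alpha\alpha}=1$), this becomes $-\sum_\alpha\lambda_\alpha\,\E\bigl[\Delta_\alpha g_S(J_{\omitted\alpha}-e_\alpha)\bigr]$, and therefore
\[
  \E[\mathcal A g_S(\Xi)]=\sum_{\alpha\in\Ii}\lambda_\alpha\,\E\bigl[\Delta_\alpha g_S(\Xi)-\Delta_\alpha g_S(J_{\omitted\alpha}-e_\alpha)\bigr].
\]
Next I would transform the configuration $J_{\omitted\alpha}-e_\alpha=\sum_{\beta\neq\alpha}J_{\beta\alpha}\delta_\beta$ into $\Xi=\sum_\beta I_\beta\delta_\beta$ one point at a time --- $I_\alpha$ insertions at $\alpha$, and $\abs{I_\beta-J_{\beta\alpha}}$ insertions or deletions at each $\beta\neq\alpha$ --- noting that a single-point modification changes $\Delta_\alpha g_S$ by a second difference of $g_S$, of modulus at most $c:=\sup\bigl\lvert g_S(\mathbf x+e_\alpha+e_\beta)-g_S(\mathbf x+e_\alpha)-g_S(\mathbf x+e_\beta)+g_S(\mathbf x)\bigr\rvert$, the supremum being over $\mathbf x$, $\alpha$, $\beta$ and $S$. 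This yields
\[
  \bigl\lvert\Delta_\alpha g_S(\Xi)-\Delta_\alpha g_S(J_{\omitted\alpha}-e_\alpha)\bigr\rvert\leqslant c\Bigl(I_\alpha+\sum_{\beta\neq\alpha}\abs{I_\beta-J_{\beta\alpha}}\Bigr).
\]

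Finally I would take expectations and feed in the partition $\Ii=\Ii_\alpha^+\cup\Ii_\alpha^-\cup\{\alpha\}$ together with \eqref{eq:monotone-} and \eqref{eq:monotone+}. The $\alpha$-term gives $\lambda_\alpha\E I_\alpha=\lambda_\alpha^2$. For $\beta\in\Ii_\alpha^-$ the difference $I_\beta-J_{\beta\alpha}$ is almost surely nonnegative, so using $\E J_{\beta\alpha}=\E[I_\alpha I_\beta]/\lambda_\alpha$ we get $\lambda_\alpha\E\abs{I_\beta-J_{\beta\alpha}}=\lambda_\alpha\lambda_\beta-\E[I_\alpha I_\beta]=-\cov(I_\alpha,I_\beta)=\abs{\cov(I_\alpha,I_\beta)}$; symmetrically, this term equals $\cov(I_\alpha,I_\beta)$ when $\beta\in\Ii_\alpha^+$. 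Summing over $\alpha$ and $\beta$ and taking the supremum over $S$ bounds $\dtv(\I,\Y)$ by $c$ times the right-hand side of \eqref{eq:BHJ}, so the result follows once we show $c\leqslant 1$. That bound --- uniform control of the second differences of the solution of the multivariate Poisson Stein equation over all test sets --- is the analytic core, and I expect it to be the main obstacle. I would derive it from the representation $g_S(\mathbf x)=-\int_0^\infty\bigl(\P[\Z^{\mathbf x}_t\in S]-\P[\Z\in S]\bigr)\,dt$, where $\Z^{\mathbf x}_t$ is the immigration--death process started from $\mathbf x$: the double difference of the integrand is controlled by coupling the processes started from $\mathbf x$, $\mathbf x+e_\alpha$, $\mathbf x+e_\beta$ and $\mathbf x+e_\alpha+e_\beta$ so that the extra particles die (and the processes coalesce) as quickly as possible, which, by independence of the coordinates, reduces the estimate to the one-dimensional immigration--death chains started from $j$ and $j+1$; the exponential decay this produces, integrated in $t$, pushes $c$ below $1$. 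With $c\leqslant 1$ in hand, the displayed inequalities give \eqref{eq:BHJ}.
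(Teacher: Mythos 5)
The paper does not prove this proposition at all: it is quoted verbatim as Corollary~10.J.1 of Barbour, Holst, and Janson~\cite{BHJ}, and the reader is simply referred there. Your sketch is a correct reconstruction of the argument in that source --- Barbour's generator form of Stein's method for the immigration--death process, the size-bias decoupling of the death term, and the reduction of each summand to $\lambda_\alpha$ times the expected distance in single-point modifications between $\Xi$ and $J_{\omitted\alpha}-e_\alpha$ are all exactly the steps used in BHJ, Chapter~10. The Stein factor $c\leqslant 1$ that you flag as the remaining gap does hold, and your proposed coupling delivers it cleanly: writing $p_t(\mathbf y)=\P[\Z_t^{\mathbf y}\in S]$ and using the same two independent $\Exp(1)$ clocks $T_\alpha,T_\beta$ for the extra particles across all four coupled processes, the double difference $p_t(\mathbf x+e_\alpha+e_\beta)-p_t(\mathbf x+e_\alpha)-p_t(\mathbf x+e_\beta)+p_t(\mathbf x)$ vanishes on $\{T_\alpha\leqslant t\}\cup\{T_\beta\leqslant t\}$ and has modulus at most~$2$ otherwise, so $\bigl\lvert\Delta_\alpha\Delta_\beta g_S(\mathbf x)\bigr\rvert\leqslant\int_0^\infty 2e^{-2t}\,dt=1$.
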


  We will also need the following technical lemma.
  \begin{lemma}\label{lem:poisson_compare}
    Let $\Y=(Y_\alpha)_{\alpha\in\Ii}$ and let $\Z=(Z_\alpha)_{\alpha\in\Ii}$ be vectors of independent
    Poisson random variables. Then
    \begin{align*}
      \dtv(\Y,\Z) &\leqslant \sum_{\alpha\in\Ii} \bigl\lvert \E Y_\alpha - \E Z_\alpha \bigr\rvert.
    \end{align*}
  \end{lemma}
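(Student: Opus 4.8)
The plan is to reduce the multivariate comparison to the one-dimensional comparison of two Poisson laws, using the standard fact that total variation distance tensorizes under independent coordinates. If the right-hand side $\sum_{\alpha\in\Ii}\abs{\E Y_\alpha-\E Z_\alpha}$ is infinite there is nothing to prove, so I would assume it is finite.

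First I would prove the one-dimensional estimate: for any $\lambda,\mu\geqslant 0$,
\[
\dtv\bigl(\Poi(\lambda),\Poi(\mu)\bigr)\leqslant\abs{\lambda-\mu}.
\]
Assume without loss of generality that $\lambda\leqslant\mu$, and let $A\sim\Poi(\lambda)$ and $B\sim\Poi(\mu-\lambda)$ be independent, so that $(A,A+B)$ is a coupling of $\Poi(\lambda)$ and $\Poi(\mu)$. Using the characterization of $\dtv$ as the minimum of $\P[X\neq Y]$ over all couplings of $X$ and $Y$,
\[
\dtv\bigl(\Poi(\lambda),\Poi(\mu)\bigr)\leqslant\P[A\neq A+B]=\P[B\geqslant 1]=1-e^{-(\mu-\lambda)}\leqslant\mu-\lambda.
\]

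Next I would tensorize. For each $\alpha\in\Ii$ pick a coupling $(\widetilde Y_\alpha,\widetilde Z_\alpha)$ of the laws of $Y_\alpha$ and $Z_\alpha$ with $\P[\widetilde Y_\alpha\neq\widetilde Z_\alpha]=\dtv(Y_\alpha,Z_\alpha)$, and take all of these couplings mutually independent. Then $(\widetilde Y_\alpha)_{\alpha\in\Ii}$ and $(\widetilde Z_\alpha)_{\alpha\in\Ii}$ have the laws of $\Y$ and $\Z$ respectively, since in each family the coordinates are independent with the right marginals, so the pair $\bigl((\widetilde Y_\alpha)_{\alpha},(\widetilde Z_\alpha)_{\alpha}\bigr)$ is a coupling of $\Y$ and $\Z$. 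A union bound then gives
\[
\dtv(\Y,\Z)\leqslant\P\bigl[(\widetilde Y_\alpha)_{\alpha}\neq(\widetilde Z_\alpha)_{\alpha}\bigr]\leqslant\sum_{\alpha\in\Ii}\P[\widetilde Y_\alpha\neq\widetilde Z_\alpha]=\sum_{\alpha\in\Ii}\dtv(Y_\alpha,Z_\alpha),
\]
and substituting the one-dimensional bound, with $\E Y_\alpha$ and $\E Z_\alpha$ being the Poisson parameters of $Y_\alpha$ and $Z_\alpha$, yields the claim. The argument is entirely routine; the only point needing a little care is the tensorization step when $\Ii$ is infinite, where one must check that the independent product of the per-coordinate optimal couplings really is a coupling of the infinite vectors $\Y$ and $\Z$ and that the union bound is legitimate — both are standard, and in any case the assumed finiteness of the right-hand side means only countably many terms are nonzero.
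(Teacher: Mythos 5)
Your proof is correct and takes essentially the same approach as the paper: couple each pair $(Y_\alpha,Z_\alpha)$ additively by an independent Poisson increment, bound the per-coordinate disagreement probability by $\abs{\E Y_\alpha-\E Z_\alpha}$, and sum. The paper phrases the last one-dimensional step via Markov's inequality rather than $1-e^{-x}\leqslant x$, but that is a cosmetic difference, and the paper leaves the tensorization-over-$\Ii$ step implicit where you spell it out.
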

  \begin{proof}
    Suppose that $U$ and $V$ are Poisson with means $a\leqslant b$. Then $U$ and $V$ can be coupled
    by setting $V=U+W$ where $W\sim\Poi(b-a)$ and is independent of $U$.
    By Markov's inequality,
    \begin{align*}
      \P[U\neq V] = \P[W\geqslant 1]\leqslant b-a.
    \end{align*}
    Applying this coupling to $Y_\alpha$ and $Z_\alpha$ for each $\alpha$, we obtain a coupling
    of $\Y$ and $\Z$ in which they differ with probability at most
    $\sum_{\alpha\in\Ii} \bigl\lvert \E Y_\alpha - \E Z_\alpha \bigr\rvert$.
  \end{proof}

  Next, we apply Proposition~\ref{prop:BHJ}
  to some indicators derived from a random permutation $\pi$ on $[n]:=\{1,\ldots,n\}$
  with distribution to be specified.
  Let $I(i,k)$ be an indicator on position~$i$ being $k$-separated in $\pi$.
  Fix $r$ and $n$, and let
  \begin{align*}
    \Ii = \Bigl\{(i,k)\;\bigl\vert\; \text{$k\in\{-r+1,\ldots,r-1\}$,\, $i\in\{r+1,\ldots,n\}$,
                and $i+k\in\{1,\ldots,n\}$} \Bigr\},
  \end{align*}
  which is the set of $(i,k)$ such that $i>r$ and $i$ might possibly be $k$-separated in $\pi$,
  for $\abs{k}\leqslant r-1$.

  \begin{proposition}\label{prop:discrete_poisson}
    Let $\pi$ be a uniformly random permutation on $[n]$ conditioned on
    \begin{align}
      \pi(1) &= a_1,\;\ldots,\;\pi(r)=a_r\label{eq:conditioning}
    \end{align}
    for some set of $r$ distinct values $A=\{a_1,\ldots,a_r\}\subseteq [n]$.
    Let $\I = \bigl(I(i,k)\bigr)_{(i,k)\in\Ii}$ be the vector of indicators defined above.
    Let $\Z = \bigl(Z(i,k)\bigr)_{(i,k)\in\Ii}$ where
    the components of $\Z$ are drawn independently from $\Poi(1/n)$.
    Then
    \begin{align*}
      d_{TV}(\I,\Z) &\leqslant \frac{16r^2 +2r}{n-r-1}.
    \end{align*}
  \end{proposition}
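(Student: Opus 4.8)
The plan is to apply Proposition~\ref{prop:BHJ} with $\Y = (Y(i,k))$ a vector of independent $\Poi(\E I(i,k))$ random variables, and then use Lemma~\ref{lem:poisson_compare} to swap $\Y$ for $\Z$, whose components are $\Poi(1/n)$. By the triangle inequality, $\dtv(\I,\Z) \leqslant \dtv(\I,\Y) + \dtv(\Y,\Z)$, and it suffices to bound each term by something of order $r^2/n$.

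First I would set up the size-bias coupling verifying the coupling condition. Given $(i,k)\in\Ii$, conditioning on $I(i,k)=1$ means conditioning the permutation~$\pi$ (already conditioned on $\pi(1)=a_1,\ldots,\pi(r)=a_r$) additionally on $\pi(i)=i+k$. The natural coupling: if $\pi(i)=i+k$ already, set $J_{\bullet(i,k)}=\I$; otherwise, locate the position~$j$ with $\pi(j)=i+k$ and the value $\pi(i)$ currently sitting in position~$i$, and swap them (i.e. transpose the values in positions~$i$ and~$j$). This produces a permutation with the correct conditional law. I would then check the monotonicity: the swap changes the separation status of at most positions~$i$ and~$j$, so I can take $\Ii_{(i,k)}^+ = \{(i,k')\colon k' \text{ admissible}\}\setminus\{(i,k)\}$ together with $\{(j,k')\}$-type indices that could only be turned on, and put everything else in $\Ii_{(i,k)}^-$ — actually the cleaner choice is $\Ii_{(i,k)}^+$ = the single index $(i,k)$'s "row" minus itself, all forced to $0$ after the swap hence $\leqslant$, so in fact most indices go into $\Ii^-_{(i,k)}$ with $J\leqslant I$ trivially (they're unchanged) except the handful touched by the swap. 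I expect this bookkeeping — correctly partitioning $\Ii$ and confirming \eqref{eq:monotone-}–\eqref{eq:monotone+} hold with probability one under the chosen coupling — to be the main obstacle; one must be careful that the position~$j$ being swapped, which is random, doesn't lie in $\{1,\ldots,r\}$ or cause a non-monotone change, and the conditioning on $\pi(1),\ldots,\pi(r)$ interacts with which values are available.

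Then I would estimate the three sums in \eqref{eq:BHJ}. Each $\E I(i,k) = \P[\pi(i)=i+k]$ is either $0$ or $\frac{1}{n-r}$ depending on whether $i+k\in A$, so $\sum_\alpha (\E I_\alpha)^2 \leqslant \abs{\Ii}\cdot\frac{1}{(n-r)^2} \leqslant \frac{(2r-1)n}{(n-r)^2}$, which is $O(r/n)$. For the covariance sums, $\cov(I(i,k),I(i',k'))$ is a simple computation from the conditional permutation: it is $0$ when the events are "independent enough" and otherwise of size $O(1/n^2)$, and the number of pairs $(\alpha,\beta)$ with $\beta$ in $\Ii^\pm_\alpha$ that contribute nonzero covariance is $O(r^2 n)$ — roughly, for each of the $\approx 2rn$ choices of $\alpha$, only $O(r)$ values of $\beta$ share a position with the swap and thus have nonzero covariance, while same-position pairs $(i,k),(i,k')$ contribute negative covariance $-\frac{1}{(n-r)(n-r-1)}$ handled in the $\Ii^-$ sum. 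Collecting, each double sum is $O(r^2/n)$. Finally, $\dtv(\Y,\Z) \leqslant \sum_\alpha \abs{\E I_\alpha - 1/n}$ by Lemma~\ref{lem:poisson_compare}; since $\E I_\alpha \in \{0, \frac{1}{n-r}\}$ and there are $\leqslant (2r-1)n$ terms with the nonzero value (each contributing $\frac{1}{n-r}-\frac1n = \frac{r}{n(n-r)}$) plus $\leqslant (2r-1)(r-1)$ terms equal to $0$ (each contributing $\frac1n$), this is again $O(r^2/n)$. Adding everything and bounding crudely by $\frac{1}{n-r-1}$ in the denominator yields the stated $\frac{16r^2+2r}{n-r-1}$; tracking the constants carefully through the covariance count is what produces the "$16$".
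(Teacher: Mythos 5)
Your strategy is the same as the paper's: the value-swap coupling $\pi'=\tau\circ\pi$ with $\tau$ the transposition of $\pi(i_0)$ and $i_0+k_0$, the application of Proposition~\ref{prop:BHJ}, and the two-step bound through an intermediate $\Y$ with $\E Y_\alpha=\E I_\alpha$ followed by Lemma~\ref{lem:poisson_compare}. Two details, however, are genuinely confused and would need to be repaired before this becomes a proof. First, the partition: your final stated choice (``most indices go into $\Ii^-$ ... except the handful touched by the swap'') is the reverse of what works. The indices where the swap can only decrease $J$ below $I$ --- namely $(i_0,k)$ with $k\neq k_0$, and $(i_0+k_0-k,k)$ with $k\neq k_0$, where $J$ is deterministically $0$ because position~$i_0$ already maps to $i_0+k_0$ in $\pi'$ --- must go into $\Ii^-$, and \emph{all} remaining indices go into $\Ii^+$. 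The one nontrivial verification is that if $I(i,k)=1$ and $J(i,k)=0$, then $(i,k)$ falls in this explicit $\Ii^-$; that is what makes \eqref{eq:monotone+} hold on $\Ii^+$. Your earlier sentence placing the forced-to-$0$ row $(i_0,k')$ in $\Ii^+$ is backwards, since $\Ii^+$ requires $J\geqslant I$ and a deterministic $J=0$ cannot dominate.

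Second, your covariance model: it is not true that $\cov(I(i,k),I(i',k'))=0$ for generic pairs. For a uniform permutation conditioned on $r$ positions, $\E[I(i',k')\mid I(i,k)=1]=\frac{1}{n-r-1}>\frac{1}{n-r}=\E I(i',k')$ whenever $i\neq i'$ and $i+k\neq i'+k'$, so every one of the $\Theta(r^2n^2)$ pairs $(\alpha,\beta)$ with $\beta\in\Ii^+_\alpha$ has covariance exactly $\frac{1}{(n-r)^2(n-r-1)}$. The $\Ii^+$ sum comes out to $O(r^2/n)$ via $\Theta(r^2n^2)\cdot\Theta(1/n^3)$, not via ``only $O(r)$ values of $\beta$ have nonzero covariance.'' It is the $\Ii^-$ sum that has at most $4r$ terms per $\alpha$, each equal to $-\frac{1}{(n-r)^2}$ (not $-\frac{1}{(n-r)(n-r-1)}$ as you wrote, since for $\beta\in\Ii^-_\alpha$ the indicators are mutually exclusive). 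Finally, you should remove the degenerate indices $(i,k)$ with $i+k\in A$ (where $I(i,k)\equiv 0$ and the swap $\tau$ is ill-defined) from $\Ii$ before constructing the coupling, and restore them only in the $\Y$-to-$\Z$ comparison, as the paper does by working over $\Ii'$ in Steps~1 and~2.
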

  \begin{proof}
    The proof proceeds in three steps: First, we construct a coupling satisfying
    the coupling condition. Next, we bound the expression on the right hand side
    of \eqref{eq:BHJ} to obtain a total variation bound between $\I$ and a random vector
    $\Y = \bigl(Y(i,k)\bigr)_{(i,k)\in\Ii}$ whose components are independent Poisson random
    variables with $\E Y(i,k) = \E I(i,k)$. Last, we bound the total variation distance between
    $\Y$ and $\Z$.

    One thing to observe before we start is that if $i+k\in A$, then
    $I(i,k)=0$ deterministically for $(i,k)\in\Ii$.
    The pairs $(i,k)$ where this holds are irrelevant when we bound the
    distance between $\I$ and $\Y$, as the corresponding term in $\Y$ is also deterministically zero.
    Thus we can ignore these terms in Steps~1 and~2 by removing them from $\I$ and $\Y$.
    Let $\Ii'=\{(i,k)\in\Ii\mid i+k\notin A\}$. In a slight abuse of notation, we take $\I$ and $\Y$
    to be indexed by $\Ii'$ rather than by $\Ii$ in Steps~1 and~2 only.

    \step{1}{Constructing the coupling.}

    \noindent
    Fix some $(i_0,k_0)\in\Ii'$.
    Our goal is to construct $(J(i,k))_{(i,k)\in\Ii'}$
    distributed as $\I$ conditioned on $I(i_0,k_0)=1$ and to partition $\Ii$
    so that \eqref{eq:monotone-}--\eqref{eq:monotone+} hold.
    (In the notation used in the coupling condition, $J(i,k)$ would be written $J_{(i,k)(i_0,k_0)}$.
    We omit mention of $(i_0,k_0)$ to simplify notation.)

    Let $\tau$ be the random swap $(\pi(i_0),i_0+k_0)$, and let
    $\pi'=\tau\circ \pi$. This forces $\pi'$ to map $i_0$ to $i_0+k_0$, making
    $i_0$ a $k_0$-separated point for $\pi'$. As $\pi(i_0)$ cannot be an element of $A$ (because $i_0>r$)
    and $i_0+k_0$ is not in $A$ by definition of $\Ii'$, the permutation $\pi'$ also satisfies
    \eqref{eq:conditioning}.

    We show now that $\pi'$ is distributed as $\pi$ conditioned
    on mapping $i_0$ to $i_0+k_0$. Let $\Pi$ be the set of permutations on $n$ elements
    satisfying the conditions specified in \eqref{eq:conditioning}, and let $\Pi'\subseteq\Pi$
    be the set of permutations that also map $i_0$ to $i_0+k_0$. One can easily check that for
    any $\sigma'\in\Pi'$, there are exactly $n-r$ permutations $\sigma\in\Pi$ such that
    swapping $\sigma(i_0)$ and $i_0+k_0$ yields $\sigma'$. As $\pi$ is distributed uniformly
    over $\Pi$, this implies that $\pi'$ is distributed uniformly over $\Pi'$, which
    shows that $\pi'$ is distributed as $\pi$ conditioned on mapping $i_0$ to $i_0+k_0$.

    Now, for $(i,k)\in \Ii'$ we define
    \begin{align*}
      J(i,k) &= \1\{\text{$i$ is $k$-separated in $\pi'$}\},
    \end{align*}
    and the distribution of $\bigl(J(i,k)\bigr)_{(i,k)\in\Ii'}$ is as we wanted.
    To partition $\Ii'$, we define $\Ii^-_{i_0,k_0}$ as all pairs $(i,k)\in\Ii'$ where either
    \begin{enumerate}[label = (\roman*)]
      \item $i=i_0$ and $k\neq k_0$; or \label{item:wrongtarget}
      \item $i=i_0+k_0-k$ and $k\neq k_0$. \label{item:wrongimage}
    \end{enumerate}
    Define $\Ii^+_{i_0,k_0}$ to be the rest of $\Ii'$ except for $(i_0,k_0)$.
    For any $(i,k)\in\Ii^-_{i_0,k_0}$, it is impossible that $\pi'(i) = i+k$:
    If \ref{item:wrongtarget} holds, then we already know that $\pi'(i) = i+k_0$, since we have
    conditioned $\pi'$ to make this so.
    If \ref{item:wrongimage} holds, then we already know that $\pi'(i_0)=i_0+k_0$, and so it cannot be
    that $\pi'(i)=i+k=i_0+k_0$. Thus, for $(i,k)\in\Ii^-_{i_0,k_0}$, we have $J(i,k)=0$, and the coupling
    satisfies \eqref{eq:monotone-}.

    To see that \eqref{eq:monotone+} is satisfied, suppose that $J(i,k)=0$ and $I(i,k)=1$ for some
    $(i,k)\in\Ii'$.
    We will show that $(i,k)\in\Ii^-_{i_0,k_0}$. By our assumption, $\pi(i)=i+k$, and $\pi'(i)\neq i+k$.
    Thus $\tau$ swaps $i+k$ with some other value. By the definition of $\tau$, either
    $i+k=\pi(i_0)$, or $i+k=i_0+k_0$. In the first case, we have $\pi(i_0)=\pi(i)$, implying
    that $i_0=i$; thus $(i,k)$ satisfies \ref{item:wrongtarget}. In the second case, we have
    $i=i_0+k_0-k$, satisfying \ref{item:wrongimage}. Thus our coupling satisfies \eqref{eq:monotone+}.

    \step{2}{Bounding $\dtv(\I,\Y)$.}

    \noindent
    The conditions of Proposition~\ref{prop:BHJ} are now satisfied, and we just need to bound the three terms
    on the right hand side of \eqref{eq:BHJ}.
    We start with the observation that $\pi$ can be thought of as a uniformly random bijection
    from $\{r+1,\ldots,n\}$ to $\{1,\ldots,n\}\setminus A$. Thus, for any $(i,k)\in\Ii'$,
    the probability that $\pi$ maps $i$ to $i+k$ is $1/(n-r)$. Equivalently,  $\E I(i,k)=1/(n-r)$.

    Now, we bound the first term in \eqref{eq:BHJ}.
    As $\abs{\Ii'}\leqslant 2r(n-r)$, we have
    \begin{align}
      \sum_{(i,k)\in\Ii'} (\E I(i,k))^2 &\leqslant \frac{2r}{n-r}. \label{eq:term1}
    \end{align}

    For the next term, observe that if $(i,k)\in\Ii^-_{i_0,k_0}$, then $I(i,k)$
    and $I(i_0,k_0)$ cannot simultaneously be $1$. Thus
    \begin{align*}
      \cov\bigl(I(i,k),\,I(i_0,k_0)\bigr) = -\E I(i,k) \E I(i_0,k_0)=-1/(n-r)^2.
    \end{align*}
    For any $(i_0,k_0)$, the number of pairs $(i,k)$ satisfying \ref{item:wrongtarget} is at most $2r$,
    and the number satisfying \ref{item:wrongimage} is at most $2r$. Thus
    $\bigl\lvert\Ii^-_{i_0,k_0} \bigr\rvert\leqslant 4r$, and
    \begin{align}
      \sum_{(i_0,k_0)\in\Ii'}\sum_{(i,k)\in\Ii^-_{i_0,k_0}} \bigl\lvert\cov\bigl(I(i,k),\,I(i_0,k_0)\bigr)\bigr\rvert
        &\leqslant \frac{2r(n-r)(4r)}{(n-r)^2}=\frac{8r^2}{n-r}.\label{eq:term2}
    \end{align}

    For the final term, suppose that $(i,k)\in\Ii^+_{i_0,k_0}$.
    As $\pi$ conditioned on $I_{i_0,k_0}=1$ is a uniformly random element of the set $\Pi'$ from
    Step~1, we have $\E[I_{i,k}\mid I_{i_0,k_0}=1]=1/(n-r-1)$. Thus
    \begin{align*}
      \cov\bigl(I(i,k),\,I(i_0,k_0)\bigr) &= \E\bigl[I(i,k)I(i_0,k_0)\bigr] - \frac1{(n-r)^2}\\
        &=  \frac{1}{n-r}\E\bigl[I_{i,k}\mid I_{i_0,k_0}=1\bigr]-\frac1{(n-r)^2}\\
        &=\frac{1}{(n-r)(n-r-1)}-\frac{1}{(n-r)^2}=
        \frac{1}{(n-r)^2(n-r-1)}.
    \end{align*}
    Using the bound $\bigl\lvert\Ii^+_{i_0,k_0}\bigr\rvert\leqslant\abs{\Ii'}\leqslant 2r(n-r)$,
    \begin{align}
      \sum_{(i_0,k_0)\in\Ii'}\sum_{(i,k)\in\Ii^+_{i_0,k_0}} \cov\bigl(I(i,k),\,I(i_0,k_0)\bigr)
        &\leqslant \frac{\bigl(2r(n-r)\bigr)^2}{(n-r)^2(n-r-1)} = \frac{4r^2}{n-r-1}.\label{eq:term3}
    \end{align}
    Summing \eqref{eq:term1}--\eqref{eq:term3} and applying Proposition~\ref{prop:BHJ}
    shows that
    \begin{align}
      \dtv(\I,\Y) &\leqslant \frac{12r^2 +2r}{n-r-1}.\label{eq:IYbound}
    \end{align}

    \step{3}{Bounding $\dtv(\Y,\Z)$.}

    \noindent
    If $(i,k)\in\Ii\setminus\Ii'$, then $\E Y(i,k)=0$. If $(i,k)\in\Ii'$, then
    $\E Y(i,k) = 1/(n-r)$. Applying Lemma~\ref{lem:poisson_compare} and using
    the bounds $\abs{\Ii\setminus\Ii'}\leqslant 2r^2$ and $\abs{\Ii'}\leqslant 2r(n-r)$,
    \begin{align*}
      \dtv(\Y,\Z) &\leqslant 2r^2\biggl(\frac1n -0 \biggr) + 2r(n-r)\biggl(\frac{1}{n-r}-\frac{1}{n}\biggr)
        = \frac{4r^2}{n}.
    \end{align*}
    Summing this bound and the one in \eqref{eq:IYbound} proves the theorem.
  \end{proof}

  \begin{proposition}\label{prop:joint_convergence}
    Let $\pi_n$ be a uniform permutation on $n$ elements.
    Let $\pp{k}$ be the point process on $[0,1]$ with atoms at $i/n$ for every position~$i$
    that is $k$-separated in $\pi_n$, and let $\bigl(\ppp{i}\bigr)_{i\in\ZZ}$ be
    independent Poisson point processes on $[0,1]$ with unit intensity.
    Let $U_1,U_2,\ldots$ be distributed uniformly on $[0,1]$, and let them be independent
    of each other and of $\bigl(\ppp{i}\bigr)_{i\in\ZZ}$.
    For any fixed $r$,
    \begin{align}
      \biggl(\frac{\pi_n(1)}{n},\ldots,\,\frac{\pi_n(r)}{n},\,\pp{-r+1},\ldots,\pp{r-1}\biggr)
       \toL \bigl(U_1,\ldots,U_r,\,\ppp{-r+1},\ldots,\ppp{r-1}\bigr)\label{eq:joint_convergence}
    \end{align}
    as $n\to\infty$.
  \end{proposition}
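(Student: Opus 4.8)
The plan is to condition on the first $r$ values $\pi_n(1),\dots,\pi_n(r)$ and combine three ingredients: that these values, rescaled by $1/n$, converge jointly to i.i.d.\ uniforms; that, \emph{given} them, Proposition~\ref{prop:discrete_poisson} controls the $k$-separated positions with a total variation bound that does not depend on the conditioning; and that a superposition of independent $\Poi(1/n)$ point masses at $1/n,2/n,\dots,n/n$ converges weakly to a unit-intensity Poisson point process on $[0,1]$. Chaining these together yields \eqref{eq:joint_convergence}.

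First I would dispose of the values: for disjoint subintervals $J_1,\dots,J_r$ of $[0,1]$, the probability that $\pi_n(j)/n\in J_j$ for every $j$ equals $\bigl(\prod_j\lfloor n\abs{J_j}\rfloor\bigr)\big/\bigl(n(n-1)\cdots(n-r+1)\bigr)\to\prod_j\abs{J_j}$, so $\bigl(\pi_n(1)/n,\dots,\pi_n(r)/n\bigr)\toL(U_1,\dots,U_r)$. Next I would replace the point-process part $(\pp{-r+1},\dots,\pp{r-1})$ by the vector $\Xi_n'$ obtained from it by deleting every atom at a position $i\leqslant r$; since each $i$ is $k$-separated for at most one $k$, there are at most $r$ such atoms, each present with probability $O(1/n)$, so this replacement costs an event of probability at most $r(2r-1)/n\to0$ and it suffices to prove \eqref{eq:joint_convergence} with $\Xi_n'$ in place of $(\pp{-r+1},\dots,\pp{r-1})$. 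The point of this reduction is that $\Xi_n'$ is a single deterministic function $\Psi$ of the indicator vector $\I=(I(i,k))_{(i,k)\in\Ii}$, namely $\pp{k}\big|_{i>r}=\sum_{(i,k)\in\Ii}I(i,k)\,\delta_{i/n}$, while $\pi_n(1),\dots,\pi_n(r)$ is itself a function of $\mathbf V_n:=(\pi_n(1)/n,\dots,\pi_n(r)/n)$.

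Now I would invoke Proposition~\ref{prop:discrete_poisson}: conditioned on $\pi_n(1)=a_1,\dots,\pi_n(r)=a_r$ it gives $\dtv(\I,\Z)\leqslant(16r^2+2r)/(n-r-1)=:\delta_n$ for $\Z$ a vector of independent $\Poi(1/n)$ variables, and --- crucially --- $\delta_n$ and the law of $\Psi(\Z)$ do not depend on $a_1,\dots,a_r$; since total variation does not increase under the deterministic map $\Psi$, the conditional law of $\Xi_n'$ given $\pi_n(1),\dots,\pi_n(r)$ lies within $\delta_n$ of the fixed law of $\Psi(\Z)$, almost surely. Separately I would check that $\Psi(\Z)\toL(\ppp{-r+1},\dots,\ppp{r-1})$: the coordinates of $\Psi(\Z)$ are built from disjoint families of independent $\Poi(1/n)$ variables and so are independent, and for a fixed $k$ and disjoint intervals the corresponding counts are independent and Poisson with means $\#\{i:i/n\in(a,b],\,(i,k)\in\Ii\}/n\to b-a$; convergence of these finite-dimensional distributions over intervals, together with the absence of fixed atoms in a unit-intensity Poisson process, gives the claim. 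Finally, for bounded continuous $G$ on $[0,1]^r$ and bounded continuous $h$ on the space of point-measure vectors, conditioning on $\pi_n(1),\dots,\pi_n(r)$ gives
\[
  \E\bigl[G(\mathbf V_n)\,h(\Xi_n')\bigr]=\E\Bigl[G(\mathbf V_n)\,\E\bigl[h(\Xi_n')\mid\pi_n(1),\dots,\pi_n(r)\bigr]\Bigr],
\]
and replacing the inner conditional expectation by the constant $\E[h(\Psi(\Z))]$ at a cost of $O(\delta_n)$ reduces this to $\E[G(\mathbf V_n)]\,\E[h(\Psi(\Z))]+o(1)\to\E[G(U_1,\dots,U_r)]\,\E[h(\ppp{-r+1},\dots,\ppp{r-1})]$, which equals $\E\bigl[G(U_1,\dots,U_r)\,h(\ppp{-r+1},\dots,\ppp{r-1})\bigr]$ because $(U_1,\dots,U_r)$ and $(\ppp{-r+1},\dots,\ppp{r-1})$ are independent. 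As products $G\otimes h$ are convergence-determining for the product topology, and the reduction to $\Xi_n'$ was up to an event of vanishing probability, \eqref{eq:joint_convergence} follows.

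The ideas above are all short; the work is in the bookkeeping. The delicate point is the last one: transferring the discrete total variation estimate of Proposition~\ref{prop:discrete_poisson}, which is stated for indicator vectors indexed by $\Ii$, into a statement about point processes on $[0,1]$, and then fusing it with the convergence of $\mathbf V_n$ by using the uniformity of that estimate in the conditioning. One should also confirm that the minor mismatches --- the gap between $\E I(i,k)=1/(n-r)$ and $\E Z(i,k)=1/n$, and the (vanishingly rare) multiple points of $\Psi(\Z)$ --- contribute only $o(1)$ and so do not affect the limit; in fact both are already absorbed into the statement of Proposition~\ref{prop:discrete_poisson}.
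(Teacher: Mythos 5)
Your proposal is correct and follows essentially the same route as the paper: restrict the point processes to positions beyond $r$ at a cost of $O(r^2/n)$, apply Proposition~\ref{prop:discrete_poisson} conditionally on $\pi_n(1),\ldots,\pi_n(r)$ together with the fact that total variation does not increase under the map to point processes, show the discretized Poisson configuration converges to a continuum Poisson process, and combine with the marginal convergence of $(\pi_n(1)/n,\ldots,\pi_n(r)/n)$. The one place you are more explicit than the paper is the final chaining step with test functions $G\otimes h$ --- the paper simply asserts that conditional convergence of the point-process vector plus marginal convergence of the rescaled first $r$ values gives the joint limit, and your argument spells out why (note, for completeness, that the product class $G\otimes h$ is convergence-determining here because the two marginal sequences are individually tight, which follows from their weak convergence on Polish spaces).
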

  \begin{proof}
    As a first step, let $\ppr{k}$ be the point process obtained by restricting
    $\pp{k}$ to the interval $\bigl((r+1)/n, 1\bigr]$. For any $i$,
    \begin{align*}
      \P\bigl[\pi_n(i)\in\{i-r+1,\ldots,i+r-1\}\bigr] &\leqslant \frac{2r}{n}.
    \end{align*}
    By a union bound, the probability that any of $1,\ldots,r$ is $k$-separated in $\pi_n$ for some $-r<k<r$
    is at most $2r^2/n$. Thus
    \begin{align*}
      \biggl(\frac{\pi_n(1)}{n},\ldots,\,\frac{\pi_n(r)}{n},\,\pp{-r+1},\ldots,\pp{r-1}\biggr)
        &= \biggl(\frac{\pi_n(1)}{n},\ldots,\,\frac{\pi_n(r)}{n},\,\ppr{-r+1},\ldots,\ppr{r-1}\biggr)
    \end{align*}
    except with probability at most $2r^2/n$.
    Since this probability vanishes as $n\to\infty$, it suffices to show the convergence in distribution
    of the right hand side of the above equation to the limit in~\eqref{eq:joint_convergence}.

    Now, observe that
    \begin{align}
      \biggl(\frac{\pi_n(1)}{n},\ldots,\,\frac{\pi_n(r)}{n}\biggr) \toL
        \bigl(U_1,\ldots,U_r\bigr)
    \end{align}
    by directly calculating
    \begin{align*}
      \lim_{n\to\infty}\P\biggl[\biggl(\frac{\pi_n(1)}{n},\ldots,\,\frac{\pi_n(r)}{n}\biggr)\in
      E_1\times\dots\times E_r\biggr]
    \end{align*}
    for any intervals $E_1,\ldots,E_n$.
    To finish off the theorem, we will show that the law of
    $\bigl(\ppr{-r+1},\ldots,\ppr{r-1}\bigr)$ conditional on
    $\pi_n(1),\ldots,\pi_n(r)$ converges weakly to the law of $(\ppp{-r+1},\ldots,\ppp{r-1})$.

    Observe that $\bigl(\ppr{-r+1},\ldots,\ppr{r-1}\bigr)$ is a function of $\I$, with its entries given by
    \begin{align*}
      \ppr{k} &= \sum_{i=r+1}^n I(i,k)\delta_{i/n}.
    \end{align*}
    Let $\zeta_k^{(n)}=\sum_{i=r+1}^n Z(i,k)\delta_{i/n}$, where $\bigl(Z(i,k)\bigr)_{(i,k)\in\Ii}$ has
    entries independent and distributed as $\Poi(1/n)$.
    Since $\dtv(f(X),f(Y))\leqslant \dtv(X,Y)$ for any random variables $X$ and $Y$ and measurable map~$f$,
    Proposition~\ref{prop:discrete_poisson} implies that the total variation distance between
    $\bigl(\ppr{-r+1},\ldots,\ppr{r-1}\bigr)$ conditional on $\pi_n(1),\ldots,\pi_n(r)$
    and $\bigl(\zeta^{(n)}_{-r+1},\ldots,\zeta^{(n)}_{r-1}\bigr)$
    vanishes as $n\to\infty$.

    Thus we only need to show that
    $\bigl(\zeta^{(n)}_{-r+1},\ldots,\zeta^{(n)}_{r-1}\bigr)$, a collection of discretized
    Poisson point processes,  converges in distribution to the continuous
    Poisson point processes $\bigl(\ppp{-r+1},\ldots,\ppp{r-1}\bigr)$.
    By the independence of entries of
    both of these vectors, we only need to show that $\zeta^{(n)}_k\toL\ppp{k}$ as $n\to\infty$.
    To prove this, by Theorem~16.16 and Chapter~16, Exercise~11 in \cite{Kallenberg},
    it suffices to show that $\zeta^{(n)}_k(B)\toL \ppp{k}(B)$ for any finite union of intervals $B$.
    Both $\zeta^{(n)}_k(B)$ and $\ppp{k}(B)$ are Poisson, and so the convergence holds as
    the portion of points of the form $i/n$ with $i\geqslant r+1$ contained in $B$ approaches the length
    of $B$.
  \end{proof}

  Let us consider the shift operation from the perspective of the point processes
  $\bigl(\pp{k}\bigr)_{k\in\ZZ}$ of Proposition~\ref{prop:joint_convergence}.
  Suppose that $\pi_n$ has a fixed point at position~$i$ and we
  bump it to the beginning of the permutation, moving forward in the tree.
  At times after $i/n$, all point processes $\pp{k}$ are unchanged. At times before $i/n$,
  every point in $\pp{k}$ becomes a point in $\pp{k-1}$ shifted to the right by $1/n$.
  Additionally, $\pp{0}$ loses its point at $i/n$, and $\pp{i-1}$ gains a point at $1/n$.
  This completely describes how the point processes change when this fixed point is shifted.

  \begin{theorem}\label{thm:lwc}
    As $n\to\infty$, the randomly rooted graph $F_n$ converges to $T$ in the local
    weak sense.
  \end{theorem}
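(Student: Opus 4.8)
The plan is to deduce Theorem~\ref{thm:lwc} from the joint convergence in Proposition~\ref{prop:joint_convergence} together with the continuity of the map $\varphi_r$ that builds the $r$-neighborhood of the root from the relevant point processes and uniform variables. Concretely, fix $r\geqslant 0$ and a finite rooted graph $H$. We want
\begin{align*}
  \P[F_n(r)\cong H] \to \P[T(r)\cong H].
\end{align*}
First I would observe that if $\pi_n$ is a uniformly random permutation (equivalently, the randomly chosen root of $F_n$), then the $r$-neighborhood $F_n(r)$ of $\pi_n$ in the forest is, \emph{with probability tending to $1$}, determined by the data $\bigl(\pi_n(1)/n,\ldots,\pi_n(r)/n,\pp{-r+1},\ldots,\pp{r-1}\bigr)$ via essentially the same combinatorial recipe as the map $\varphi_r$ of Definition~\ref{def:varphi}. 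This is the content of the running examples in the introduction: to list the descendants of $\pi_n$ up to level $r$ one needs the $k$-separated positions for $0\leqslant k< r$, to list the ancestors up to level $r$ and \emph{their} descendants one also needs the $k$-separated positions for $-r< k< 0$ and the values $\pi_n(1),\ldots,\pi_n(r)$ (which pin down the ancestral chain), and the ``shift by $1/n$'' dynamics of the $\pp{k}$ under bumping, spelled out right before the theorem statement, is exactly the discrete analogue of the forward/backward maps $f$ and $b$.

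The one discrepancy — and the step I expect to be the main obstacle — is the ``reentry'' phenomenon discussed on page~\pageref{page:reentry} and in Example~2: when a $0$-separated letter at position $j$ is bumped it creates a new $(j-1)$-separated letter at position~$1$, and likewise moving towards the base, a $k$-separated letter in the first position behaves anomalously. These effects are invisible to the point-process encoding. So I would argue: the recipe above computes $F_n(r)$ correctly \emph{unless} $\pi_n$ has a $k$-separated letter with $\abs{k}< r$ occurring in one of the first $r+1$ positions, or more generally unless one of the at most (bounded in $n$) permutations reached within distance $r$ has such a letter. Each such event has probability $O(r^2/n)\to 0$ — the same union-bound estimate already used in the proof of Proposition~\ref{prop:joint_convergence} — and there are only finitely many permutations within distance $r$ whose first few entries we must control, so the total bad probability still vanishes. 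Hence $\P\bigl[F_n(r)\neq \varphi_r\bigl(\pi_n(1)/n,\ldots,\pi_n(r)/n;\pp{-r+1},\ldots,\pp{r-1}\bigr)\bigr]\to 0$, where here I identify $\varphi_r$'s output with the corresponding isomorphism class of rooted graph.

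Granting that, the conclusion is immediate. The map sending $\bigl(u_1,\ldots,u_r;\xi_{-r+1},\ldots,\xi_{r-1}\bigr)$ to the isomorphism class of the finite rooted graph $\varphi_r(u_1,\ldots,u_r;\xi_{-r+1},\ldots,\xi_{r-1})$ is measurable, and it is a.s.\ continuous at a configuration drawn from the limit law $\bigl(U_1,\ldots,U_r,\ppp{-r+1},\ldots,\ppp{r-1}\bigr)$: with probability one the $U_i$ avoid each other and avoid the (countably many) atoms of the $\ppp{k}$'s, and the finitely many atoms in play are all distinct, so small perturbations of the $u_i$ and of the atom locations do not change the combinatorial structure $\varphi_r$ produces. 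Therefore by Proposition~\ref{prop:joint_convergence} and the continuous mapping theorem,
\begin{align*}
  \varphi_r\Bigl(\tfrac{\pi_n(1)}{n},\ldots,\tfrac{\pi_n(r)}{n};\,\pp{-r+1},\ldots,\pp{r-1}\Bigr)
   \toL \varphi_r\bigl(U_1,\ldots,U_r;\,\ppp{-r+1},\ldots,\ppp{r-1}\bigr) = T(r)
\end{align*}
as random elements of the (countable, discrete) space of finite rooted graphs up to isomorphism; convergence in distribution there is precisely $\P[\,\cdot\cong H]\to\P[\,\cdot\cong H]$ for every $H$. Combining this with the vanishing probability of the exceptional event gives $\P[F_n(r)\cong H]\to\P[T(r)\cong H]$ for every $r$ and $H$, which is the definition of local weak convergence. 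I would close by remarking that the only genuinely nontrivial input beyond Proposition~\ref{prop:joint_convergence} is the bookkeeping that the reentry corrections affect only an $o(1)$-probability event; everything else is the continuous mapping theorem plus the observation that $\varphi_r$ is the continuum limit of the bumping dynamics on point processes described just above the theorem.
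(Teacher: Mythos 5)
Your proposal is correct and follows essentially the same route as the paper: show that the $r$-ball around $\pi_n$ agrees with $\varphi_r\bigl(\pi_n(1)/n,\ldots,\pi_n(r)/n;\pp{-r+1},\ldots,\pp{r-1}\bigr)$ outside an event of vanishing probability, then combine Proposition~\ref{prop:joint_convergence} with the a.s.\ continuity of $\varphi_r$ and the continuous mapping theorem. The paper isolates the exceptional event slightly more cleanly (no atoms of $\pp{-r+1},\ldots,\pp{r-1}$ in $[0,r/n]$, a condition on $\pi_n$ alone, which already rules out all reentry within the $r$-ball) and separately notes that the $1/n$ shift is harmless because $\varphi_r$ depends only on the order of atoms; your parenthetical that the number of permutations within distance~$r$ is bounded in $n$ is not literally correct, but it is also not needed given your primary condition.
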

  \begin{proof}
    Fix some integer $r\geqslant 1$, and assume throughout that $n>r$.
    We need to show that the $r$-ball around $\pi_n$
    in $F_n$ converges in distribution to the $r$-ball around the root in $T$, which is
    \begin{align*}
      \varphi_r\bigl(\,U_1,\ldots,U_r;\, \ppp{-r+1},\ldots,\ppp{r-1}\bigr).
    \end{align*}
    Let
    \begin{align*}
      T_n&=
      \varphi_r\biggl(\frac{\pi_n(1)}{n},\ldots,\,\frac{\pi_n(r)}{n};\, \pp{-r+1},\ldots,\pp{r-1}\biggr).
    \end{align*}
    The idea will be to show that
    this tree $T_n$ is almost the same thing as the $r$-neighborhood
    of $\pi_n$ in $F_n$, and then to apply Proposition~\ref{prop:joint_convergence} to
    show that $T_n$ converges in distribution to the $r$-ball around the root in $T$.
    \begin{claim}
      If $\pp{-r+1},\ldots,\pp{r-1}$ contain no points in the interval $[0,r/n]$, then $T_n$ is identical to
      to the $r$-ball around $\pi_n$ in $F_n$.
    \end{claim}
    \begin{proof}
      The algorithm defining $T_n$ differs from the true $r$-ball around $\pi_n$ in $F_n$ in two ways.
      First, when moving forward in the tree by bumping a fixed point at position~$i$ to position~$1$, no
      point at $1/n$ is inserted into the point process $\pp{i-1}$.
      For this to cause $T_n$ to lack a
      vertex in the $r$-ball around $\pi_n$,
      it must occur that after $s\geqslant 0$ steps backward in the tree from $\pi_n$,
      there is a fixed point at $i$, and then after bumping it one can move forward in the tree
      another $i-1$ times
      to return the fixed point to $i$, and then one can bump the fixed point again, all
      while remaining within the $r$-ball around $\pi_n$. Thus, it is necessary that $s+i< r$.
      Under the conditions of this claim, $\pi_n(1),\ldots,\pi_n(r)>r$,
      since otherwise $\pi_n$ would have a $k$-separated point at position~$i$ for
      some $-r+1\leq k\leq r-1$ and $1\leq i\leq r$. Hence,
      one must move backwards from $\pi_n$ at least $r-i+1$ times to create a fixed point at $i$,
      showing that this circumstance can never occur.

      The second difference in the algorithm is that points in the point processes are not shifted by
      $1/n$ at each step. One could equally well define the algorithm giving $T_n$ only in terms of
      the order of the points in $\pp{-r+1},\ldots,\pp{r-1}$, and doing this one sees that the shifting
      of points by $1/n$ would not change $T_n$.
    \end{proof}

    As $n\to\infty$, the condition in this claim holds with probability approaching $1$, as we showed
    with a union bound in the proof of Proposition~\ref{prop:joint_convergence}. Thus
    we only need to show that $T_n$ converges in distribution to the $r$-ball around the root in $T$, that is,
    that
    \begin{align*}
      \varphi_r\biggl(\frac{\pi_n(1)}{n},\ldots,\,\frac{\pi_n(r)}{n};\, \pp{-r+1},\ldots,\pp{r-1}\biggr)
      \toL \varphi_r\bigl(\,U_1,\ldots,U_r;\, \ppp{-r+1},\ldots,\ppp{r-1}\bigr).
    \end{align*}
    Once we show that $\varphi_r$ is continuous on a set that almost surely contains
    \begin{align*}
      \bigl(U_1,\ldots,U_r;\, \ppp{-r+1},\ldots,\ppp{r-1}\bigr),
    \end{align*}
    Proposition~\ref{prop:joint_convergence} and the
    continuous mapping theorem~\cite[Theorem~2.7]{Billingsley} immediately give us this result.

    We claim that $\varphi$ is continuous at any point
    $\bigl(u_1,\ldots,u_r;\zeta_{-r+1},\ldots,\zeta_{r-1}\bigr)$
    where all atoms of the point process configurations $\zeta_{-r+1},\ldots,\zeta_{r-1}$
    are distinct, all $u_1,\ldots,u_r\in[0,1]$ are distinct from each other and
    any points in the configurations.
    In fact, $\varphi_r$ is constant on a neighborhood of
    $\bigl(u_1,\ldots,u_r;\,\zeta_{-r+1},\ldots,\zeta_{r-1}\bigr)$,
    as a slight perturbation that does not change the order of any of the points
    $u_1,\ldots,u_r$ or the points in $\zeta_{-r+1},\ldots,\zeta_{r-1}$
    does not change the resulting tree.
    As $\bigl(U_1,\ldots,U_r;\, \ppp{-r+1},\ldots,\ppp{r-1}\bigr)$
    has these properties almost surely, this proves the continuity property of $\varphi_r$
    that we needed.
  \end{proof}
  As a consequence of Theorem~\ref{thm:lwc}, any statistic of $\pi_n$ determined by its
  $r$-neighborhood in $F_n$ converges in distribution to the corresponding statistic
  of the limit tree $T$. This excludes many statistics like the distance from $\pi_n$
  to the nearest leaf of $F_n$, which is nearly local but
  cannot be deduced from the $r$-neighborhood of $\pi_n$ for
  any fixed value of $r$. The following corollary addresses this by a truncation argument.
  \begin{corollary}\label{cor:truncation}
    Let $f$ be a function defined on rooted trees, and suppose that $\min(f(T_0), r)$ is determined by
    the $r$-neighborhood of the root of $T_0$, for any rooted tree $T_0$ and any $r\geqslant 0$.
    Then $f(F_n)\toL f(T)$.
  \end{corollary}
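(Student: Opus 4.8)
The plan is to deduce this from Theorem~\ref{thm:lwc} together with an elementary truncation argument. The key observation is that since $\min(f(T_0),r)$ depends only on the $r$-neighborhood of the root of $T_0$, it is a statistic of the kind to which Theorem~\ref{thm:lwc} directly applies: writing $f_r(T_0) = \min(f(T_0),r)$, we have $f_r(F_n) \toL f_r(T)$ for every fixed $r$, because $f_r(F_n)$ is a function of the $r$-ball around $\pi_n$ in $F_n$ and $f_r(T)$ is the same function of the $r$-ball around the root of $T$. So for each fixed $r$ we get convergence in distribution of the truncated quantities.

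Next I would pass from the truncated statements to the untruncated one using a standard ``double limit'' argument for weak convergence. Concretely, fix a bounded continuous test function $g\colon\mathbb{R}\to\mathbb{R}$ (or, since these are $\ZZ_{\geq0}\cup\{\infty\}$-valued, it is cleaner to test against the events $\{f = m\}$ for each fixed $m\geqslant 0$). For any $r > m$, on the event $\{f(F_n)=m\}$ we have $f_r(F_n)=m$, and conversely $\{f_r(F_n)=m\}=\{f(F_n)=m\}$ whenever $r>m$; likewise for $T$. Hence for $r>m$,
\begin{align*}
  \P[f(F_n)=m] = \P[f_r(F_n)=m]\to \P[f_r(T)=m] = \P[f(T)=m]
\end{align*}
as $n\to\infty$. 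This gives $\P[f(F_n)=m]\to\P[f(T)=m]$ for every finite $m$, which is precisely convergence in distribution of the (possibly extended-real-valued) random variables $f(F_n)$ to $f(T)$, since convergence of the point masses at every finite value, for $\ZZ_{\geq 0}\cup\{\infty\}$-valued variables, is equivalent to convergence in distribution (any escaping mass is accounted for by the atom at $\infty$).

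The main subtlety — and the only place any care is needed — is the treatment of the value $+\infty$: if $f(T)=\infty$ with positive probability, then $f(F_n)\toL f(T)$ must be interpreted as convergence in the one-point compactification $\ZZ_{\geq0}\cup\{\infty\}$, and the argument above is exactly what makes this work, since we never need to control $\P[f(F_n)=\infty]$ directly — it is forced by the convergence of all the finite atoms. (In the applications in this paper, $f$ will be the distance to the nearest or farthest leaf, so $f(T)$ is a.s.\ finite and no compactification is needed, but the corollary as stated covers the general case.) I do not expect any genuine obstacle here; the content of the corollary is entirely in Theorem~\ref{thm:lwc}, and this step is just the observation that a ``nearly local'' statistic is a monotone limit of local ones.
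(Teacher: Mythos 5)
Your argument is correct and follows essentially the same route as the paper: apply Theorem~\ref{thm:lwc} (via the continuous mapping theorem) to the truncated statistic $\min(f(\cdot),r)$, then observe that for $r$ large enough the truncated and untruncated statistics agree on the event in question, so the distributional limit is inherited. The paper phrases the second step through the CDF at continuity points rather than through point masses, and does not dwell on the possibility that $f(T)=\infty$, but these are cosmetic differences.
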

  \begin{proof}
    By assumption, we have $\min(f(F_n),r)=\gamma(F_n(r))$, where $F_n(r)$ denotes the $r$-ball
    around the root of $F_n$ and $\gamma$ is some deterministic function on rooted trees.
    By Theorem~\ref{thm:lwc}, as $n\to\infty$ we have $F_n(r)\toL T(r)$ with respect to the discrete
    topology on rooted trees. Hence, by the
    continuous mapping theorem, $\gamma(F_n(r))\toL \gamma(T(r))$,
    and in our original notation, $\min(f(F_n),r)\toL \min(f(T),r)$.
    As $\P[X\leqslant x] = \P[\min(X,r)\leqslant x]$ for $x<r$, it holds
    for any $x$ where the distribution function
    $\P[f(T)\leqslant x]$ is continuous that
    \begin{align*}
      \lim_{n\to\infty}P[f(F_n)\leqslant x] &=
      \lim_{n\to\infty}\P[\min(f(F_n),\,x+1)\leqslant x]\\ &= \P[\min(f(T),\,x+1)\leqslant x]
         = \P[f(T)\leqslant x].\qedhere
    \end{align*}
  \end{proof}

  \section{Combinatorics of paths to leaves}
  \label{sec:combinatorics distributions}

  In the next section, we will be interested in the limiting distributions of the shortest and longest distance of a
  random permutation to a leaf. In preparation for this, we consider the combinatorics related to the
  shortest and longest path to a leaf in this section.

  For a permutation $\pi \in \mathfrak{S}_n$, let $T(\pi)$ be the fixed point tree with $\pi$ as root.
  We call $i$ a \defn{true fixed point} of $\pi$ if $\pi(i)=i$ and $i\neq 1$.

  \begin{proposition}
  \label{proposition.shortest}
  Given $\pi \in \mathfrak{S}_n$, a shortest path from $\pi$ to a leaf in $T(\pi)$ is obtained by bumping the
  rightmost true fixed point at each step.
  \end{proposition}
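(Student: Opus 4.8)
Write $\ell(\pi)$ for the length of a shortest path from $\pi$ to a leaf in $T(\pi)$. The plan is to prove the sharper identity $\ell(\pi) = 1 + \ell(\pi^*)$, where $\pi^*$ is the result of bumping the rightmost true fixed point $i^*$ of $\pi$ (if $\pi$ has no true fixed point it is already a leaf and there is nothing to show). Given this identity, the proposition follows by induction on $\ell(\pi)$: the greedy path first makes the move $\pi \to \pi^*$ and then, by the inductive hypothesis, continues along a shortest path out of $\pi^*$, so its length is $1 + \ell(\pi^*) = \ell(\pi)$. First I would record the bookkeeping for a single bump: bumping a true fixed point at position $i$ changes only positions $1,\dots,i$, preserves every true fixed point at a position $>i$, destroys those at positions $<i$, and converts each $1$-separated position $<i$ into a true fixed point one position to the right; in particular the rightmost true fixed point moves strictly leftward when bumped, so the greedy procedure reaches a derangement (a leaf) after finitely many steps.

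The bound $\ell(\pi) \le 1 + \ell(\pi^*)$ is clear. For the reverse bound the crucial observation is that the fixed point at $i^*$ is unavoidable: having nothing to its right, it can only be removed by bumping position $i^*$ itself, and it survives every bump at a position $<i^*$. Thus along any path $\pi = \pi^{(0)} \to \cdots \to \pi^{(\ell)}$ to a leaf there is a first step $t^*$ that bumps position $i^*$, and all earlier bumps are at positions $<i^*$. If $t^* = 1$ then $\pi^{(1)} = \pi^*$ and the remaining moves show $\ell(\pi^*) \le \ell-1$. If $t^*\ge 2$, I would exchange steps $t^*-1$ and $t^*$: with $\sigma := \pi^{(t^*-2)}$ and $c<i^*$ the position bumped at step $t^*-1$, a direct computation shows that bumping $c$ and then $i^*$ in $\sigma$ gives the same permutation as bumping $i^*$ in $\sigma$ and then moving the entry now sitting in position $c+1$ into position $2$. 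Invoking the monotonicity lemma below, this last move does not increase the distance to a leaf, so the result $\rho$ of bumping $i^*$ in $\sigma$ satisfies $\ell(\rho) \le \ell(\pi^{(t^*)}) = \ell - t^*$; following $\pi^{(0)} \to \cdots \to \pi^{(t^*-2)} = \sigma \to \rho$ and then a shortest path out of $\rho$ gives a path of length $(t^*-1)+(\ell - t^*) = \ell-1$, contradicting minimality. Hence $t^*=1$ and $\ell(\pi^*) = \ell(\pi)-1$.

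The remaining ingredient, which I expect to be the main obstacle, is the monotonicity lemma: if $\tau'$ is obtained from $\tau$ by removing the entry in position $2$ and reinserting it in a position $p>2$ at which it is $(-1)$-separated (its value being $p-1$), then $\ell(\tau') \le \ell(\tau)$ --- intuitively, burying that entry as an inert negatively-separated entry cannot make reaching a leaf harder. I would prove it by an exchange argument of the same flavour, transforming a shortest path out of $\tau$ into one out of $\tau'$ of no greater length, with the induction organized on the position of the rightmost true fixed point (which strictly decreases once that fixed point is bumped, in the configurations that arise). The delicate point throughout is to keep exact track of the two caveats highlighted in the introduction --- the reentry of a bumped value as a (possibly positively) separated entry in position $1$, and the presence of negatively separated entries, which are inert but still shift under bumps. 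These are asymptotically negligible but genuinely present for a fixed finite permutation, and the argument must dispose of them honestly.
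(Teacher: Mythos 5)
Your high-level plan (reduce to $\ell(\pi)=1+\ell(\pi^*)$, observe that the rightmost true fixed point $i^*$ is unavoidable, locate the first step $t^*$ that bumps $i^*$, and exchange steps when $t^*\geqslant 2$) is a genuinely different route from the paper, which instead uses the reformulation in Remark~\ref{remark.bumping short} --- the length of the always-rightmost path can be read off as the greedy $0,1,2,\ldots$ subsequence of the separation word, scanned right to left --- and then compares that invariant for $v_k'$ and $v_{k+1}$ directly. Your exchange computation (bumping $c$ then $i^*$ in $\sigma$ equals bumping $i^*$ in $\sigma$ and then moving the entry from position $c+1$ to position $2$) is also correct.

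The problem is that the monotonicity lemma your argument hinges on is false. Take $\sigma=23145$, so $c=4$, $i^*=5$. Bumping $4$ then $5$ gives $\sigma_{12}=54231$, a derangement, so $\ell(\sigma_{12})=0$; bumping $5$ alone gives $\rho=52314$, which has fixed points at positions $2$ and $3$, and $\ell(\rho)=1$. With $\tau=\sigma_{12}$, $\tau'=\rho$, $p=c+1=5$, this is exactly the configuration of your lemma, yet $\ell(\tau')=1>0=\ell(\tau)$. The intuition that burying the entry as a negatively separated one ``cannot make reaching a leaf harder'' fails because moving it from position $2$ to position $p$ shifts each entry at positions $3,\ldots,p$ one place to the left, incrementing its separation by one; this can promote $(-1)$-separated entries to fresh fixed points (here at positions $3$ and $4$ of $\sigma_{12}$, which become positions $2$ and $3$ of $\rho$). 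Consequently the bound $\ell(\rho)\leqslant\ell(\pi^{(t^*)})=\ell-t^*$ does not hold, the exchanged path $\pi^{(0)}\to\cdots\to\sigma\to\rho\to\cdots$ has length $\ell$ rather than $\ell-1$, and no contradiction with minimality is obtained --- consistent with the fact that both $23145\to42315\to54231$ and $23145\to52314\to35214$ are shortest paths. To rescue this route you would need at least the weaker bound $\ell(\tau')\leqslant\ell(\tau)+1$ (itself not obvious, and requiring careful treatment of reentry) together with replacing ``contradiction with minimality'' by ``choose a shortest path with $t^*$ minimal.'' The paper's use of the separation-word characterization sidesteps all of this, because it lets one compare the two candidate continuations from $v_k$ in a single step by inspecting one separation word against the other.
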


  \begin{remark} \label{remark.bumping short}
  Note that the procedure of picking the rightmost true fixed point in $\pi \in \mathfrak{S}_n$ and then bumping
  the letter can be reformulated in the following way. Replace each letter in the one-line notation for $\pi$
  by its $k$-separation. Then scan the word from right to left and successively pick the first $k=0,1,2,\ldots$ until
  no further subsequent bigger $k$ can be picked to the left. Picking $0$ in position 1 is not allowed.
  The letters in these positions in the original $\pi$ are the letters that are bumped.

  For example, take $\pi=32415$. The $k$-separation word is given by $201\red{\bar{3}}0$ (where as in
  Figure~\ref{fig:ex1} a negative separation is indicated in red). The selected positions are underlined
  $\underline{2}0\underline{1}\red{\bar{3}}\underline{0}$, meaning that the letters $5,4,3$ are
  bumped in this order:
  \begin{equation}
  \label{equation.shortest}
    32415 \to 53241 \to 45321 \to 34521.
  \end{equation}
  \end{remark}

  \begin{proof}[Proof of Proposition~\ref{proposition.shortest}]
  Let $v=(v_0, v_1, \ldots, v_\ell)$ with $v_0=\pi$ and $v_\ell$ a leaf be a sequence of vertices in which at some step,
  the rightmost true fixed point is not bumped. Let $v_k$ be the last vertex (meaning $k$ is maximal) where
  anything other than the rightmost true fixed point is bumped in the step $v_k\to v_{k+1}$.
  Denote by $v'=(v_k=v'_k,v'_{k+1},\ldots,v'_{\ell'})$ the path in which the rightmost true fixed point is always bumped.
  We will show that $\ell'\leqslant \ell$. Then moving from right to left replacing all
  non-rightmost bumps produces a (weakly) shorter path than $v$, which shows that the path where the rightmost true
  fixed point is always bumped is a shortest path.

  Let $f_1,\ldots, f_a$ be the true fixed points in $v_k=v'_k$ ordered from left to right and suppose that $f_i$ with
  $1\leqslant i <a$ is the fixed point bumped to reach $v_{k+1}$.
  Since $k$ is maximal such that $v_k\to v_{k+1}$ does not bump the rightmost true fixed point, both
  $(v_{k+1},\ldots,v_\ell)$ and $(v'_k,\ldots,v'_{\ell'})$ follow the algorithm outlined in Remark~\ref{remark.bumping short}
  starting at $v_{k+1}$ and $v'_k$, respectively. Note that by Remark~\ref{remark.bumping short}, the length of the path
  formed by always choosing the rightmost true fixed point can be read off directly from the first vertex (in our case $v_{k+1}$
  and $v'_k$, respectively) by determining the longest consecutive sequence $0,1,2,\ldots$ from right to left in the
  corresponding separation word. Now the separation word of $v_{k+1}$ does not differ from that of $v'_k$ in the positions
  to the right of the position of $f_i$. In the positions to the left of the position of $f_i$, the separations of $v_{k+1}$
  are one less than those of $v'_k$ and shifted one position to the right. In addition there is a new separation
  of value $i-1$ in position 1. Hence the chosen $0,1,2,\ldots$ sequence in $v_{k+1}$ is at most one shorter
  than that in $v'_k$. This proves that $\ell'\leqslant \ell$.
  \end{proof}

  \begin{proposition}
  \label{proposition.longest}
  Given $\pi \in \mathfrak{S}_n$, the longest path from $\pi$ to a leaf in $T(\pi)$ is obtained by bumping the
  leftmost true fixed point at each step.
  \end{proposition}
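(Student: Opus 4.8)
The plan is to imitate the proof of Proposition~\ref{proposition.shortest}: show that any path to a leaf which at some step bumps a true fixed point other than the leftmost one can be replaced by a path that is at least as long. The one feature of the shortest-path proof that does not carry over is the closed formula of Remark~\ref{remark.bumping short} for the length of the canonical path; in its place I would run an induction.

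For a permutation $\sigma$ write $g(\sigma)$ for the length of the path obtained from $\sigma$ by repeatedly bumping the leftmost true fixed point (so $g(\sigma)=0$ if $\sigma$ has no true fixed point), and write $\sigma^{[f]}$ for the result of bumping a true fixed point $f$ of $\sigma$. The whole proposition follows from the single inequality
\[
  g\bigl(\sigma^{[f]}\bigr)\leqslant g(\sigma)-1\qquad\text{for every true fixed point $f$ of $\sigma$.}
\]
Indeed, given this, any path $v_0=\pi,v_1,\ldots,v_\ell$ ending at a leaf satisfies $g(\pi)=g(v_0)\geqslant g(v_1)+1\geqslant\cdots\geqslant g(v_\ell)+\ell=\ell$, so the leftmost-bumping path, of length $g(\pi)$, is at least as long as any other. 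Moreover, if $f_1<f_2<\cdots<f_a$ are the true fixed points of $\sigma$ then $g(\sigma)=1+g(\sigma^{[f_1]})$, so the displayed inequality is equivalent to $g(\sigma^{[f]})\leqslant g(\sigma^{[f_1]})$ for all $f$: among the children of $\sigma$, the one obtained by bumping the \emph{leftmost} true fixed point has the largest value of $g$.

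I would prove this (together with the proposition itself) by strong induction on $h(\sigma)$, the length of the longest path from $\sigma$ to a leaf, which is finite since $T(\sigma)$ is a finite tree. The case $h(\sigma)=0$ is trivial. In the inductive step, every child $\sigma^{[f_i]}$ has $h(\sigma^{[f_i]})<h(\sigma)$, so by the inductive hypothesis $g(\sigma^{[f_i]})=h(\sigma^{[f_i]})$ for each $i$; it therefore suffices to show $h(\sigma^{[f_1]})\geqslant h(\sigma^{[f]})$ for every true fixed point $f$. A direct computation shows that the siblings $\sigma^{[f_1]}$ and $\sigma^{[f]}$ agree at every position outside $\{1\}\cup\{f_1+1,\ldots,f\}$, that $\sigma^{[f_1]}$ still has a true fixed point at position $f$, and that bumping it produces a grandchild $\rho$ of $\sigma$ which agrees with $\sigma^{[f]}$ except on the contiguous block of positions $\{2,\ldots,f_1+1\}$, where $\rho$ and $\sigma^{[f]}$ differ only by the cyclic rotation carrying the letter $f_1$ (which occupies position $f_1+1$ of $\sigma^{[f]}$, with separation $-1$) to position $2$. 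Granting that such a local rotation cannot decrease the longest-path length by more than one, we get $h(\sigma^{[f_1]})\geqslant 1+h(\rho)\geqslant h(\sigma^{[f]})$, which closes the induction.

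The main obstacle is this final claim, that passing from $\sigma^{[f]}$ to $\rho$ lowers the longest-path length by at most one. I would establish it by running the leftmost-bumping processes on $\sigma^{[f]}$ and on $\rho$ in parallel and arguing that at each step the two processes either bump the same position or differ by a single extra bump on one side, keeping careful track of which positions are $k$-separated after each bump. This bookkeeping is the delicate part, particularly near degenerate configurations (for instance $f_1=2$, or $\sigma(1)=1$) and because of the ``reentry'' of a bumped letter near the front noted on page~\pageref{page:reentry}; the reduction steps above are routine by comparison.
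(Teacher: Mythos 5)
Your reduction is correct and clean up to the point you flag: an induction on $h(\sigma)$ collapses the whole statement to the single comparison $h(\sigma^{[f_1]})\geqslant h(\sigma^{[f]})$, and your computation that $\rho=(\sigma^{[f_1]})^{[f]}$ differs from $\sigma^{[f]}$ only by cycling the letter $f_1$ within the block of positions $\{2,\ldots,f_1+1\}$ checks out. But the proposal stops at exactly the place where a proof is required. The ``main claim'' --- that this particular rotation lowers the longest-path length by at most one --- is not proved; you only describe a plan (``running the leftmost-bumping processes... in parallel... keeping careful track of which positions are $k$-separated''), and you yourself identify that plan as ``the delicate part.'' That delicate part is essentially all of the content of the paper's argument, which does the bookkeeping explicitly: it swaps the offending bump of $f_i$ for a bump of the \emph{adjacent} fixed point $f_{i-1}$, then interleaves the remaining bumps, decomposing the tail as $R_1L_1R_2L_2\cdots$ and producing the strictly longer bumpable sequence $R_1'L_1r_1R_2'L_2r_2\cdots$. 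So the work that would make your proposal a proof has not been done.

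It is also worth flagging that the rotation claim is not obviously safe: passing from $\sigma^{[f]}$ to $\rho$ shifts each of $\sigma(1),\ldots,\sigma(f_1-1)$ one position to the right, \emph{decreasing} each of their separations by one, which in general can turn a bumpable letter into a dead one (separation dropping below zero). That none of this costs more than a single step is exactly what needs careful tracking, and the reentry phenomenon you mention makes the tracking genuinely fiddly. In short, the reduction is fine and the structural idea (induction on $h$, comparing the two relevant grandchildren) is a legitimately different route from the paper's direct path-lengthening construction, but as written it replaces one combinatorial lemma that you must prove with another of comparable difficulty and leaves that lemma unproven.

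A minor additional point: your argument, if completed, would show only that the leftmost-bumping path is \emph{a} longest path (via $g(\pi)\geqslant\ell$ for every path length $\ell$). The paper's construction yields a \emph{strictly} longer path whenever a non-leftmost bump occurs, which is what justifies the uniqueness claim made at the end of the paper's proof and used later (e.g.\ in Lemma~\ref{lem:right.to.left}, where membership in $\B$ is well defined because the longest path is unique). Your version would need an extra word to recover strictness.
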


  \begin{proof}
  Let $v=(v_0, v_1, \ldots, v_\ell)$ with $v_0=\pi$ and $v_\ell$ a leaf be a sequence of vertices in which at some step,
  the leftmost true fixed point is not bumped. We will show the existence of a strictly longer path to a leaf.
  Let $v_k$ be the last vertex such that from $v_k \to v_{k+1}$ anything but the leftmost true fixed point is bumped.
  Denote by $f_1,\ldots, f_a$ the true fixed points in $v_k$ ordered from left to right and suppose that $f_i$ with $i>1$
  is the fixed point bumped in the transition $v_k\to v_{k+1}$.

  We are going to construct a new path to a leaf $v'=(v_0',\ldots,v'_{\ell+1})$ as follows. First, $v_i'=v_i$ for all
  $0 \leqslant i \leqslant k$. In the step $v_k'\to v_{k+1}'$ the true fixed point $f_{i-1}$ is bumped. Except for the letters
  $f_{i-1}$ versus $f_i$, $v_{k+1}'$ and $v_{k+1}$ are the same on the first $f_{i-1}$ positions. Hence we can bump
  the same letters in $v$ and $v'$ until a letter beyond position $f_{i-1}$ is bumped in $v_h\to v_{h+1}$ (with $h>k$).
  At this point, the letter $f_i$ is bumped in $v'_h\to v'_{h+1}$, which is still a true fixed point (no letters to the right of
  position $f_{i-1}$ have changed position up to this point in the bumping process).
  The letters in positions weakly between $f_{i-1}+2$ and $f_i$ agree in $v_h$ and $v'_{h+1}$. The letters different
  from $f_{i-1}$ and $f_i$ weakly before position $f_{i-1}+1$ are shifted one position to the right in $v'_{h+1}$
  compared to $v_h$. Denote the sequence of bumped letters in $v_h\to v_{h+1}\to \cdots \to v_\ell$ by
  $R_1L_1R_2 L_2 \ldots$, where $L_i$ are letters in positions weakly before position $f_{i-1}+1$ and $R_i$ are
  letters after position $f_{i-1}+1$. Write $R_i=R_i'r_i$, where $r_i$ is a single letter. Then the sequence
  $R_1'L_1r_1 R_2' L_2 r_2 \ldots$ is a bumpable sequence on $v'_{h+1}$ due to the relative shift of letter in the left
  $f_{i-1}+1$ positions. This indeed shows that $v'$ is strictly longer than $v$.

  Hence the path given by always bumping the leftmost true fixed point is the unique longest path to a leaf.
  \end{proof}

  Note that unlike for the longest path to a leaf, the shortest path is not unique.

  \begin{example}
  \label{example.bumping}
  Take $\pi = 32415$. Then the longest path to a leaf is of length 9:
  \begin{multline*}
    32415 \to 23415 \to 52341 \to 25341 \to 32541 \to 23541 \to 42351 \\
    \to 24351 \to 32451 \to 23451.
  \end{multline*}
  The shortest path to a leaf given by always bumping the rightmost fixed point is given in~\eqref{equation.shortest}.
  Here is another shortest path:
  \[
    32415 \to 23415 \to 52341 \to 45231.
  \]
  \end{example}

   Denote by $\B:=\B(\pi)$ the set of \defn{bumped values} in the longest path from $\pi$ to a leaf.
   For example, $\B(32415) = \{2,3,4,5\}$ as can be seen from Example~\ref{example.bumping}.
   We now provide a characterization of the set $\B$ that enables us to determine if $\pi(i)\in\B$ given only knowledge
   of $\pi(i+1),\ldots,\pi(n)$.

  \begin{lemma}\label{lem:right.to.left}
    For $\pi \in \mathfrak{S}_n$ and $1\leqslant i\leqslant n$,
    \begin{equation}
    \label{equation.condition B}
      \pi(i)\in\B \quad \iff \quad \pi(i)\neq 1 \quad \text{and} \quad 0\leqslant \pi(i) - i \leqslant \#\{j> i\mid \pi(j)\in\B\}.
    \end{equation}
  \end{lemma}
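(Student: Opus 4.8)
The plan is to recast \eqref{equation.condition B} as an equality of two subsets of $\{1,\dots,n\}$ and prove it by induction along the longest path. Let $\widehat{\B}=\widehat{\B}(\pi)$ be the set of values produced by the following right-to-left sweep: processing $i=n,n-1,\dots,1$ and maintaining a running count $c_i=\#\{j>i : \pi(j)\in\widehat{\B}\}$, put $\pi(i)\in\widehat{\B}$ exactly when $\pi(i)\ne 1$ and $0\le\pi(i)-i\le c_i$. This is a legitimate definition because membership of $\pi(i)$ depends only on the already-processed positions $j>i$, and \eqref{equation.condition B} is precisely the assertion that $\B(\pi)=\widehat{\B}(\pi)$. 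So it suffices to prove $\B(\pi)=\widehat{\B}(\pi)$ for every $\pi\in\mathfrak{S}_n$.

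I would induct on the length $L$ of the longest path from $\pi$ to a leaf of $T(\pi)$, which is finite because $F_n$ is a finite forest. If $\pi$ has no true fixed point then $L=0$, $T(\pi)=\{\pi\}$, and $\B(\pi)=\emptyset$; here $\widehat{\B}(\pi)=\emptyset$ as well, since whenever the current count is $0$, adding $\pi(i)$ would force $\pi(i)=i$ and $i\ne 1$, i.e.\ a true fixed point, so the count never leaves $0$. If $\pi$ has a true fixed point, let $p$ be the leftmost one, so $\pi(p)=p$, $p\ge 2$, and $\pi(i)\ne i$ for $2\le i<p$. By Proposition~\ref{proposition.longest}, the longest path of $\pi$ bumps $p$ first, giving $\pi_1$ with $\pi_1(1)=p$, $\pi_1(j)=\pi(j-1)$ for $2\le j\le p$, and $\pi_1(j)=\pi(j)$ for $j>p$, and it then follows the longest path of $\pi_1$. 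Hence $\B(\pi)=\{p\}\cup\B(\pi_1)$, the longest path of $\pi_1$ has length $L-1$, and by the inductive hypothesis $\B(\pi_1)=\widehat{\B}(\pi_1)$; so it remains only to show
\[
  \widehat{\B}(\pi)=\{p\}\cup\widehat{\B}(\pi_1).
\]

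To prove this I would run the sweeps for $\pi$ and for $\pi_1$ in parallel. On positions $>p$ the two permutations agree, so the sweeps agree there and reach position $p$ with a common count $c^\ast$. In $\pi$, position $p$ carries the value $p$, which is added (as $p\ne 1$ and $0\le c^\ast$), raising the count to $c^\ast+1$; in $\pi_1$, position $p$ carries $\pi(p-1)$, which I match with position $p-1$ of $\pi$. The crux is that, under the shift $i\leftrightarrow i+1$ identifying positions $1,\dots,p-1$ of $\pi$ with positions $2,\dots,p$ of $\pi_1$ (which carry equal values $\pi(i)$), $\widehat{\B}$-membership agrees and the $\pi$-count at position $i$ stays exactly one larger than the $\pi_1$-count at position $i+1$. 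This follows by downward induction on $i$, with base offset $c^\ast+1$ versus $c^\ast$ as just noted: writing $c$ for the current $\pi$-count at $i$, the ``add $\pi(i)$'' condition reads ``$\pi(i)\ne 1$ and $i\le\pi(i)\le i+c$'' on the $\pi$ side and, at the matched position $i+1$ with count $c-1$, ``$\pi(i)\ne 1$ and $i+1\le\pi(i)\le i+c$'' on the $\pi_1$ side; the upper bounds coincide, so the conditions differ only when $\pi(i)=i$ with $i\ne 1$ — which is impossible for $1\le i\le p-1$ since $p$ is the \emph{leftmost} true fixed point and position $1$ is never a true fixed point. Thus the two sweeps add $\pi(i)$ together or not at all, the counts stay in sync, and positions $<p$ contribute the same values to $\widehat{\B}(\pi)$ and $\widehat{\B}(\pi_1)$. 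The only remaining value is $p$, which sits at position $1$ of $\pi_1$ (and may or may not be added there); collecting the contributions from positions $>p$, from position $p$, and from positions $<p$ then gives $\widehat{\B}(\pi)=\{p\}\cup\widehat{\B}(\pi_1)$, closing the induction.

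I expect the parallel-sweep comparison of the last paragraph to be the main obstacle: one must notice that the two discrepancies between $\pi$ and $\pi_1$ on the prefix — every prefix position is shifted one step to the right in $\pi_1$, and the prefix portion of the $\pi_1$-sweep starts with a count one smaller because $p$ has not yet been encountered — cancel exactly, and that this cancellation is clean only because the leftmost-fixed-point rule of Proposition~\ref{proposition.longest} rules out genuine fixed points among prefix positions $2,\dots,p-1$. Everything else (the base case, and the reduction $\B(\pi)=\{p\}\cup\B(\pi_1)$) is routine bookkeeping given that proposition.
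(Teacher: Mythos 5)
Your argument is correct, but it proceeds along a genuinely different route from the paper's. The paper proves the biconditional directly: for the forward direction it argues that a bumped letter can never have negative separation (since separations only decrease along the longest path) and that moving $\pi(i)$ right by $\pi(i)-i$ steps forces at least that many distinct bumps of letters originally to its right; for the backward direction it invokes Proposition~\ref{proposition.longest} to conclude that any letter that ever reaches separation $0$ is eventually bumped, and then argues that the hypothesized count of right-bumps drives $\pi(i)$ down to separation $0$. You instead recast the lemma as the set identity $\B(\pi)=\widehat{\B}(\pi)$, where $\widehat{\B}$ is defined by a right-to-left sweep, and prove it by induction on the length of the longest path, reducing $\pi$ to its leftmost-bump child $\pi_1$ via $\B(\pi)=\{p\}\cup\B(\pi_1)$ and then matching the two sweeps position by position. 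Both arguments lean on Proposition~\ref{proposition.longest} at the same juncture. The paper's proof is shorter and more conceptual but leaves the reader to verify a few dynamical assertions (that once a letter is $0$-separated it stays $0$-separated under the leftmost rule until bumped, and that the relevant right-bumps involve distinct letters originally at positions $>i$); your parallel-sweep bookkeeping makes those cancellations fully explicit at the cost of extra length, and cleanly isolates the one place where the two sweeps could disagree (the lower bound $\pi(i)\ge i$ versus $\pi(i)\ge i+1$) and why the leftmost-fixed-point hypothesis rules it out.
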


  \begin{proof}
  Let $\pi(i)\in \B$. First we show that then the conditions on the right of~\eqref{equation.condition B} hold.
  Since only true fixed points can be bumped, we certainly have $\pi(i)\neq 1$.
  Note that $\pi(i)-i$ is the $i$-separation of $\pi(i)$ at position $i$. In the longest path to a leaf, a letter in one-line notation
  either stays in its position or moves to the right unless it is bumped. Hence the separation of a letter either remains
  the same or becomes smaller (unless it is bumped). This implies that a letter with a negative separation
  can never be bumped, which shows that $0 \leqslant \pi(i)-i$. Now suppose that $\pi(i)-i=k$, meaning that $\pi(i)$
  is $k$-separated. Since $\pi(i)\in \B$ and hence must be bumped in the path to the leaf, there must be at least $k$ letters to
  the right of $\pi(i)$ that are bumped to move $\pi(i)$ $k$ positions to the right and make it $0$-separated.
  This is precisely the condition $\pi(i) - i \leqslant \#\{j> i\mid \pi(j)\in\B\}$.

  Conversely, suppose that $\pi(i)$ satisfies the conditions on the right of~\eqref{equation.condition B}
  and set $\pi(i)-i=k$. By Proposition~\ref{proposition.longest}, the longest path is obtained by always bumping the
  leftmost true fixed point, and hence any point that becomes $0$-separated will eventually be bumped. Since by
  assumption $0\leqslant k \leqslant \#\{j> i\mid \pi(j)\in\B\}$, there are at least $k$
  points to the right of $\pi(i)$ that get bumped, which makes $\pi(i)$ eventually $0$-separated, and hence $\pi(i)\in \B$.
  \end{proof}

  Using the above ideas, we can give a bound on the length of the longest path to a leaf, which will be useful
  in Section~\ref{sec:distributions}. For $\pi \in \mathfrak{S}_n$, we denote by $\ell(\pi)$ the length of the longest
  path from $\pi$ to a descendent leaf.

  \begin{lemma}\label{LUB}
  Fix $\pi \in \mathfrak{S}_n$ and let $\B:=\B(\pi) = \{b_1,b_2,\ldots, b_k\}$, where $k = |\B|$ and
  $2\leqslant b_1<b_2<\cdots < b_k\leqslant n$. Then
  \begin{equation}
  \label{equation.LUB}
      \ell(\pi) \leqslant 1 + \sum_{m=1}^{k-1}\prod_{i=1}^m\biggl(1+ \frac{1}{b_i-i}\biggr)\leqslant
    k\prod_{i=1}^{k-1}\biggl(1+ \frac{1}{b_i-i}\biggr).
  \end{equation}
  \end{lemma}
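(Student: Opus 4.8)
The second inequality is immediate. Its left-hand side is a sum of exactly $k$ summands — the constant $1$ together with $\prod_{i=1}^{m}\bigl(1+\frac1{b_i-i}\bigr)$ for $m=1,\dots,k-1$ — and since $b_1<\dots<b_k$ forces $b_i\geqslant i+1$, every factor $1+\frac1{b_i-i}$ lies in $[1,2]$; hence each summand is at most $\prod_{i=1}^{k-1}\bigl(1+\frac1{b_i-i}\bigr)$.

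For the first inequality the plan is to work along the unique longest path from $\pi$, which by Proposition~\ref{proposition.longest} is obtained by repeatedly bumping the leftmost true fixed point. Write $c_j$ for the number of times the value $b_j$ is bumped along this path, so $\ell(\pi)=\sum_{j=1}^{k}c_j$, and put $S_j=\sum_{i=j}^{k}c_i$, so that $S_1=\ell(\pi)$ and $S_{k+1}=0$. Everything then reduces to the single estimate
\begin{equation}\label{eq:lub-key}
  (c_j-1)(b_j-j)\leqslant S_{j+1}\qquad\text{for all }1\leqslant j\leqslant k,
\end{equation}
which in particular (taking $j=k$) forces $c_k=1$. Granting \eqref{eq:lub-key}, rewrite it as $S_j=c_j+S_{j+1}\leqslant 1+\bigl(1+\frac1{b_j-j}\bigr)S_{j+1}$ and unwind downward from $S_{k+1}=0$; a short induction gives $S_1\leqslant\sum_{m=0}^{k-1}\prod_{i=1}^{m}\bigl(1+\frac1{b_i-i}\bigr)=1+\sum_{m=1}^{k-1}\prod_{i=1}^{m}\bigl(1+\frac1{b_i-i}\bigr)$, which is the claimed bound.

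To establish \eqref{eq:lub-key} I would first prove the structural fact that, all along the longest path, no true fixed point ever occupies a position strictly to the left of the current position of $b_1$. The key sub-observation is that on the longest path every true fixed point that ever appears is eventually bumped: while a value $v$ is a true fixed point the leftmost true fixed point sits at a position no larger than that of $v$, so the next bump either bumps $v$ itself or occurs to its left and leaves $v$ untouched; since the path ends at a leaf, which has no fixed points, $v$ must be bumped. Hence a true fixed point strictly left of $b_1$ would have value equal to its position, which is at most $b_1$ and (by a simultaneous induction showing $b_1$ is never to the right of position $b_1$) strictly less than $b_1=\min\B$, so it would belong to $\B$ — impossible. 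Consequently every bump other than a bump of $b_1$ occurs strictly to the right of $b_1$ and advances it one position, so between two consecutive bumps of $b_1$ the value $b_1$ travels from position $1$ to position $b_1$ and exactly $b_1-1$ other values are bumped; summing over the $c_1-1$ such intervals gives \eqref{eq:lub-key} for $j=1$. The analogous analysis for $b_j$ shows that the only true fixed points that can ever lie to the left of $b_j$ are among $b_1,\dots,b_{j-1}$, and one then argues that between consecutive bumps of $b_j$ at least $b_j-j$ of the strictly larger values $b_{j+1},\dots,b_k$ get bumped; summing the $c_j-1$ intervals yields \eqref{eq:lub-key} in general.

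The hard part is that last count. The easy version only produces $b_j-b_{j-1}$ in place of $b_j-j$: once $b_j$ has advanced past position $b_{j-1}$ no smaller bumped value can be to its right, so the remaining $b_j-b_{j-1}$ advances must come from $b_{j+1},\dots,b_k$; but $b_{j-1}$ may exceed $j$, so this is too weak. Getting the sharp denominator requires controlling how many advances of $b_j$ the smaller values $b_1,\dots,b_{j-1}$ can collectively supply while $b_j$ still lies among its first $b_{j-1}$ positions — heuristically, each of them can be to the right of $b_j$ only for a limited stretch, and their own bump-counts are constrained by the same mechanism — which I expect to be handled by an induction treating $b_j$ together with all the smaller bumped values simultaneously. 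This is where most of the work lies.
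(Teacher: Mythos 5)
Your decomposition and your key inequality $(c_j-1)(b_j-j)\leqslant S_{j+1}$ are exactly the ones used in the paper's proof (your $c_j,S_j$ are the paper's $N_j, M_j$), and the unwinding and the second inequality are handled the same way. But you explicitly leave the crucial step unproven: you reduce everything to the claim that between two consecutive bumps of $b_j$, the smaller letters $b_1,\dots,b_{j-1}$ collectively supply at most $j-1$ of the $b_j-1$ advances, and then acknowledge that your easy count only gives $b_j-b_{j-1}$ rather than $b_j-j$ and that ``this is where most of the work lies.'' A proof that stops with a sketch of a simultaneous induction you have not carried out is not a proof.

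The missing idea is in fact much simpler than the induction you outline. Each smaller value $b_i$ with $i<j$ can advance $b_j$ \emph{at most once} between two consecutive bumps of $b_j$. Indeed, when $b_i$ is bumped it lands at position~$1$, hence strictly to the left of $b_j$, and a single bump never reverses the left-to-right order of two letters that are not themselves bumped (bumping at position $r$ shifts positions $1,\dots,r-1$ rigidly to $2,\dots,r$ and leaves positions $>r$ fixed). Since any further bump of $b_i$ only returns it to position~$1$, the value $b_i$ stays strictly to the left of $b_j$ until $b_j$ itself is bumped, and so $b_i$ is never again a fixed point to $b_j$'s right in that interval. Consequently at most $j-1$ of the $b_j-1$ required advances come from smaller bumped values, and the remaining $\geqslant b_j-j$ come from bumps of $b_{j+1},\dots,b_k$, which is exactly $(c_j-1)(b_j-j)\leqslant S_{j+1}$. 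This one observation replaces the entire ``hard part'' of your write-up; the preparatory structural claims you develop (that on the longest path every emergent true fixed point is eventually bumped, that nothing sits left of $b_1$, etc.) are not needed.
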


  \begin{proof}
  Let $N_i$ denote the number of times $b_i$ is bumped in the longest path in $T(\pi)$ and let
  $M_i = \sum_{j\geqslant i} N_j$. With this notation $\ell(\pi) = M_1.$

  Every time the letter $b_j$ is bumped to the beginning, it needs to be moved to the right one position at a time
  by $b_j-1$ bumps of letters to the right of $b_j$. The position of $b_j$ can only be increased once by bumps
  of smaller letters $b_i<b_j$.  At least $b_j-j$ letters greater than $b_j$ must be bumped for the value $b_j$ to
  return to position $b_j$ and be eligible to be bumped again. Therefore $N_j\leqslant 1+\frac{1}{b_j-j}M_{j+1}$, and hence
  \begin{align*}
    M_j\leqslant 1 + \biggl(1+ \frac{1}{b_j-j}\biggr)M_{j+1}.
  \end{align*}
  By inductively applying this inequality starting from $M_k\leq 1$, we obtain
  \begin{align*}
    M_j &\leqslant 1 + \sum_{m=j}^{k-1}\prod_{i=j}^m\biggl(1+ \frac{1}{b_i-i}\biggr)
  \end{align*}
  for $j=1,\ldots,k$. Thus
  \begin{align*}
    \ell(\pi)=M_1&\leqslant 1 + \sum_{m=1}^{k-1}\prod_{i=1}^m\biggl(1+ \frac{1}{b_i-i}\biggr)\\
      &\leqslant 1 + (k-1)\prod_{i=1}^{k-1}\biggl(1+ \frac{1}{b_i-i}\biggr)
      \leqslant k\prod_{i=1}^{k-1}\biggl(1+ \frac{1}{b_i-i}\biggr).\qedhere
  \end{align*}
  \end{proof}

  The first bound in~\eqref{equation.LUB} is sharp, giving the correct length
  $2^{n-1}-1$ for the identity permutation of length~$n$.

  \section{Limiting distributions and higher moments}
  \label{sec:distributions}
  In this section we compute the limiting distributions of two statistics of the fixed point
  tree $F_n$ using the local weak limit we have constructed in Section~\ref{sec:local.weak.convergence}.
  As usual, $\pi_n$ is a uniformly random permutation of length~$n$. We study two statistics related to leaves:
  \begin{enumerate}
  \item the distance $M_n$ from $\pi_n$ to the nearest leaf descending from it;
  \item the distance $L_n$ from $\pi_n$ to the farthest leaf descending from it.
  \end{enumerate}
  We use $\Geo(q)$ to refer to the geometric distribution on $\{0,1,\ldots\}$ with parameter~$q$,
  the number of failures before the first success in independent trials that are successful
  with probability~$q$. This distribution places probability~$(1-q)^kq$ on $k$ and has mean~$(1-q)/q$.

  \begin{theorem}\label{thm:limits}\
    \begin{enumerate}[label = (\roman*)]
      \item Let $M\sim \Poi(1)$.
        As $n\to\infty$, we have $M_n\toL M$ and $\E M_n^p\to\E M^p$ for all $p>0$.\label{limits.nearest}
      \item Let $L\sim \Geo\bigl(e^{-1}\bigr)$. As $n\to\infty$, we have $L_n\toL L$
        and $\E L_n^p\to\E L^p$ for all $p>0$.\label{limits.farthest}
    \end{enumerate}
  \end{theorem}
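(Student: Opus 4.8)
The plan is to establish the two convergences in distribution via the local weak convergence of Theorem~\ref{thm:lwc} combined with the truncation corollary (Corollary~\ref{cor:truncation}), and then to upgrade each convergence in distribution to convergence of all moments by exhibiting a uniformly integrable dominating family. First I would observe that both $M_n$ and $L_n$ are of the type covered by Corollary~\ref{cor:truncation}: the quantity $\min(M(T_0),r)$ is determined by the $r$-neighborhood of the root (if there is a leaf within distance $r$, this is visible; if not, we report $r$), and similarly $\min(\ell(T_0),r)$ depends only on the $r$-ball, because the longest path among descendants within distance $r$ is itself visible within the $r$-ball. Hence $M_n\toL M$ and $L_n\toL L$, where $M$ and $L$ are the corresponding statistics computed on the limit tree $T$ of Definition~\ref{def:T}.

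The next step is to identify the laws of $M$ and $L$ on $T$. For the nearest leaf: a shortest path from the root $\rho$ of $T$ to a leaf corresponds, via the forward map $f$, to repeatedly bumping (the limiting analogue of) the rightmost true fixed point, as in Proposition~\ref{proposition.shortest} and Remark~\ref{remark.bumping short}; in terms of the point processes this amounts to reading the separation word from right to left and picking off a maximal run $0,1,2,\ldots$. In the limit the relevant combinatorial quantity is the length of the longest run $0,1,\dots,m-1$ realizable from the independent Poisson point processes $\ppp{0},\ppp{1},\dots$ on $[0,1]$: one needs a point of $\ppp{0}$, then a point of $\ppp{1}$ to its left, then a point of $\ppp{2}$ to the left of that, and so on. I would compute $\P[M\geqslant m]$ by a direct nested-integral calculation over the positions of these points, using that $\ppp{k}$ restricted to an interval of length $x$ has $\Poi(x)$ points independently across $k$; the probability that $\ppp{k}$ has at least one point in $[0,x]$ is $1-e^{-x}$, and integrating the resulting product over the ordered positions should yield $\P[M = m] = e^{-1}/m!$, i.e.\ $M\sim\Poi(1)$. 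For the farthest leaf: by Proposition~\ref{proposition.longest} and Lemma~\ref{lem:right.to.left}, membership in the bumped set $\B$ is determined scanning right to left, and in the limit tree the length $\ell$ of the longest path from $\rho$ decomposes recursively: each point $x$ of $\ppp{0}$ contributes $1$ plus the length of the longest path in the subtree obtained by the down-shift $f(\pppp[\rho],x)$, which by the Poisson structure is an independent copy of the same object on $[0,x]$, rescaled. Conditioning on the number $\abs{\ppp[\rho]{0}}\sim\Poi(1)$ of children and their positions, and using the self-similarity of Poisson processes under the down-shift, I expect the recursion to close to give $\P[L\geqslant \ell+1] = e^{-1}\P[L\geqslant \ell]$ for $\ell\geqslant 0$, i.e.\ $L\sim\Geo(e^{-1})$; the constant $e^{-1}$ should emerge because the expected number of ``productive'' children is the probability that a $\Poi(1)$ chain of separations lines up, which is $\sum 1/k! \cdot(\text{something})$ collapsing to the stated value. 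I would double-check the base case against Lemma~\ref{LUB} and the identity-permutation value $2^{n-1}-1$ as a sanity check on the recursion.

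It remains to prove the moment convergence $\E M_n^p\to\E M^p$ and $\E L_n^p\to\E L^p$. Since $M_n$ and $L_n$ are nonnegative and converge in distribution, it suffices to show that $\{M_n^p\}$ and $\{L_n^p\}$ are uniformly integrable for every $p>0$, for which it is enough to produce, for each $p$, a uniform bound $\sup_n\E M_n^{p+1}<\infty$ and $\sup_n\E L_n^{p+1}<\infty$ (or to bound all moments at once by exhibiting sub-exponential tails). For $M_n$ this is easy: bumping the rightmost true fixed point at each step reaches a leaf, and the number of steps is at most the length of the maximal right-to-left run $0,1,2,\dots$ in the separation word of $\pi_n$, whose tail $\P[M_n\geqslant m]$ is bounded above, uniformly in $n$, by roughly $n^m/(n^m m!)\le 1/m!$ via a crude union bound over increasing $m$-tuples of positions; this gives a uniform Poisson-type tail and hence all moments. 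For $L_n$ the obstacle is genuine and is the main difficulty of the theorem: $L_n=\ell(\pi_n)$ can be as large as $2^{n-1}-1$, so one cannot bound it deterministically. Here I would use Lemma~\ref{LUB}: with $\B(\pi_n)=\{b_1<\dots<b_k\}$ we have $\ell(\pi_n)\leqslant k\prod_{i=1}^{k-1}(1+\tfrac1{b_i-i})$, and the key point is that for a \emph{uniformly random} $\pi_n$ the values $b_i-i$ are typically large --- by Lemma~\ref{lem:right.to.left}, $\pi_n(i)\in\B$ forces $\pi_n(i)-i$ to be at most the number of later bumped values, which is itself stochastically small, so the small gaps $b_i-i$ that could make the product blow up occur only near the right end and only rarely. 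I would make this quantitative: bound $\E L_n^{p}$ by splitting on the event that $\B(\pi_n)$ has no small gaps $b_i - i < n^{1/2}$ (say) among the last $n^{1/2}$ positions, on which the product in Lemma~\ref{LUB} is $1+o(1)$ and $k$ has Poisson-type tails, and show the complementary event has probability decaying fast enough (polynomially or faster in $n$) to kill the $2^{n-1}$ worst-case bound. Getting the right trade-off between the cutoff scale, the tail bound on $k$, and the probability of small gaps --- so that $\sup_n \E L_n^{p}<\infty$ for every $p$ --- is the crux; once uniform integrability is in hand, the moment statements follow from the convergence in distribution.
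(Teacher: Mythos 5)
Your high-level structure matches the paper exactly: use Corollary~\ref{cor:truncation} to get $M_n\toL M$ and $L_n\toL L$, identify the laws of $M$ and $L$ on the limit tree $T$, and then upgrade to moment convergence via uniform integrability. That part is sound. But there are issues in the middle and a genuine gap at the end.

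On identifying the limit laws: your nested-integral computation for $M$ would work, though it is messier than the paper's observation that the right-to-left scan produces i.i.d.\ $\Exp(1)$ interarrival distances, so $M$ is just the number of arrivals of a single unit-rate Poisson process on $[0,1]$. For $L$, your recursion is misstated: you wrote $\P[L\geqslant \ell+1] = e^{-1}\P[L\geqslant \ell]$, which would give $\P[L\geqslant\ell]=e^{-\ell}$, i.e.\ $\Geo(1-e^{-1})$, not $\Geo(e^{-1})$; the correct step is $\P[L\geqslant \ell+1] = (1-e^{-1})\P[L\geqslant \ell]$. Also, your self-similarity picture of ``an independent copy on $[0,x]$, rescaled'' is not what is happening: after bumping the leftmost point $x$ of $\ppp[\rho]{0}$, the new fixed-point process $\ppp[\rho(x)]{0}=\ppp[\rho]{1}\rvert_{[0,x)}+\ppp[\rho]{0}\rvert_{(x,1]}$ is a fresh unit-intensity Poisson process on the \emph{full} interval $[0,1]$, independent of the past; there is no rescaling. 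The paper also gives a second argument (scanning right to left and building $\B$ as a Yule process), which is the one it later couples against in the finite-$n$ setting.

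The genuine gap is in your plan for $\sup_n\E L_n^p<\infty$. You propose splitting on a ``no small gaps in the last $n^{1/2}$ positions'' event and killing the complementary event by a probability ``decaying polynomially or faster in $n$.'' This cannot work: the worst case is $L_n\leqslant 2^{n-1}$, so the bad-event contribution to $\E L_n^p$ is $\P[\text{bad}]\cdot 2^{(n-1)p}$, which requires the bad-event probability to decay \emph{exponentially} in $n$ to stay bounded. A polynomial-in-$n$ bound leaves the expectation potentially diverging. Moreover, you have no mechanism to establish the needed tail bounds. The paper never splits into a good event with a deterministic estimate versus a bad event hit by the worst case; instead it proves, for a fixed constant threshold $x$ (not depending on $n$), the pathwise bound $L_n \leqslant 2^{\abs{\B_x}}\abs{\B}(1+1/x)^{\abs{\B}}$, and then controls the two random exponents separately: Proposition~\ref{prop:bsize} gives $\P[\abs{\B}\geqslant k]\leqslant C^k$ uniformly in $n$ (this is itself nontrivial and requires the discrete-to-Yule coupling of Lemmas~\ref{lem:discrete.yule} and~\ref{lem:yule.coupling}), and Lemma~\ref{bxsize} gives the \emph{superexponential} tail $\P[\abs{\B_x}\geqslant t]\leqslant (ex/t)^t$, which is what actually beats $2^{2pn}$ at the extreme. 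Cauchy--Schwarz then combines these with $x$ chosen so that $(1+1/x)^{2p}<C^{-1}$. You correctly identify that this is ``the crux,'' but the quantitative machinery --- especially the exponential tail on $\abs{\B}$ and the $t!$-type tail on $\abs{\B_x}$ --- is missing from your sketch, and without it the argument does not close.
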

  \begin{proof}
    We will prove that $M_n\toL M$ and $L_n\toL L$ in Propositions~\ref{theorem.shortest}
    and~\ref{theorem.longest}, respectively. In Proposition~\ref{longpathmoment},
    we will show that $\sup_n\E M_n^p<\infty$ and $\sup_n\E L_n^p<\infty$ for any $p>0$.
    It is a standard result that this proves
    the convergence of all moments (see \cite[Theorem~4.5.2]{Chung}, for instance).
  \end{proof}

  An interesting open problem is to determine the limiting distribution of
  the number of steps from $\pi_n$ to a leaf when moving randomly towards a leaf.

  Sections~\ref{subsection.shortest} and~\ref{subsection.farthest} are devoted to proving
  the convergence of $M_n$ and $L_n$ to their limiting distributions.
  Both $M_n$ and $L_n$ are functionals of $F_n$ satisfying the criteria of
  Corollary~\ref{cor:truncation}. This corollary then proves that
  $M_n$ and $L_n$ converge in distribution to the corresponding functionals of the limit tree.
  Thus, our task in these sections is to work out the distributions of the distances in $T$
  from the root to the nearest and farthest leaves descending from it.

  Section~\ref{subsection.higher moments} gives our proof that $M_n$ and $L_n$
  are bounded in $L^p$, establishing the convergence of moments.
  We emphasize that this result is not just a technicality.  The reentry phenomenon
  mentioned on Page~\pageref{page:reentry} can cause $L_n$ to be extremely large.
  For example, if $\pi_n$ begins with the string $1\cdots k$, then $L_n\geqslant 2^{k-1}$, with the same
  small letters bumped repeatedly. The probability of such reentry is vanishingly small, making
  it irrelevant to the distributional convergence of $L_n$. But these unlikely events nonetheless
  contribute to the moments of $L_n$, and a priori
  it is plausible that even the expectation of $L_n$ tends to infinity.
  To prove that this is not the case, we are forced to develop
  several bounds that should be useful in future work on the fixed point forest.

  \subsection{Shortest distance to a leaf}
  \label{subsection.shortest}
  Our first lemma is the limiting analogue of Proposition~\ref{proposition.shortest}.

  \begin{lemma}\label{lem:shortwalk}
  A shortest path from the root of $T$ to a descendant leaf is obtained by bumping the rightmost
  abstracted fixed point at each step.
  \end{lemma}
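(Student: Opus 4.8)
The plan is to mirror the argument of Proposition~\ref{proposition.shortest} in the continuum setting, using the description of the forward map $f$ from Definition~\ref{def:forward.map} in place of the combinatorial bumping rule. Recall that in the limit tree, bumping an abstracted fixed point at position $x\in[0,1]$ — i.e.\ an atom $x$ of $\ppp[\rho]{0}$ — shifts down all point processes on $[0,x)$, so that $\ppp[\rho]{k+1}\big\rvert_{[0,x)}$ becomes $\ppp[\rho]{k}\big\rvert_{[0,x)}$, while nothing to the right of $x$ changes. A leaf of $T$ is a vertex $v$ with $\ppp[v]{0}=0$, i.e.\ no abstracted fixed point. The length of the shortest path from the root to a descendant leaf can be read off from the point processes $\ppp[\rho]{0},\ppp[\rho]{1},\ldots$: following the rightmost-fixed-point strategy, one scans from right to left picking successive values $0,1,2,\ldots$ as far as one can, exactly as in Remark~\ref{remark.bumping short}, and this is an analogue of the "longest consecutive run $0,1,2,\dots$ from the right" statistic on the separation word.

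First I would set up the exchange argument exactly as in the discrete case. Take any path $v=(v_0,v_1,\ldots,v_\ell)$ from the root $v_0$ to a descendant leaf $v_\ell$ in which, at some step, the rightmost abstracted fixed point is not the one bumped; let $v_k\to v_{k+1}$ be the last such step. Let $v'=(v_k,v'_{k+1},\ldots,v'_{\ell'})$ be the path from $v_k$ obtained by always bumping the rightmost abstracted fixed point thereafter. Because $k$ is maximal, the tail $(v_{k+1},\ldots,v_\ell)$ also follows the rightmost-bumping rule after its first step; hence both $\ell-(k+1)$ and $\ell'-k$ are determined by the "rightmost consecutive run" statistic applied to $v_{k+1}$ and $v_k$ respectively. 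The key computation is then to compare these two runs. Suppose the atom bumped in $v_k\to v_{k+1}$ sits at position $x$, strictly to the left of the rightmost atom $x^*$ of $\ppp[v_k]{0}$. To the right of $x$, the point processes of $v_{k+1}$ and $v'_k$ agree (in the continuum there is no reentry, and by Definition~\ref{def:varphi} no new atom at $1/n$ is inserted); to the left of $x$, the processes of $v_{k+1}$ are those of $v'_k$ shifted down by one in the index $k$. Therefore the right-to-left greedy run of $0,1,2,\ldots$ in $v_{k+1}$ can be at most one longer than that in $v'_k$, so $\ell-(k+1)\leqslant \ell'-k$, giving $\ell'\leqslant\ell$. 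Replacing non-rightmost bumps one at a time from right to left thus never increases path length, which shows the rightmost-bumping path is shortest.

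The main obstacle is making precise the claim that in $T$ there is genuinely no reentry — that is, when an abstracted fixed point at position $x$ is bumped, the backward-to-forward asymmetry of Definition~\ref{def:varphi} means no atom is ever inserted at the extreme left, so the tail of the path is honestly governed only by the order statistics of the atoms of $(\ppp[v]{k})_{k\geqslant 0}$. This is built into the construction (the maps $f$ and $b$ do not create atoms at a fixed point of $[0,1]$, and in $T$ all atoms are almost surely distinct), so the discrete complication that appears in Example~2 and Figure~\ref{fig:ex2} simply does not arise; the run-comparison step then goes through cleanly. One should also note, as in the discrete case, that the shortest path need not be unique, which does not affect the statement. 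With these observations, the exchange argument above completes the proof.
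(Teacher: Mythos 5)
Your setup — mirroring the exchange argument of Proposition~\ref{proposition.shortest}, tracking the right-to-left greedy run of $0,1,2,\ldots$, and noting that the maps of Definition~\ref{def:varphi} create no reentry in $T$ — is exactly what the paper has in mind when it says the lemma is proved ``in an analogous fashion.'' However, the key comparison step is stated in the wrong direction, and as written the argument fails.

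You assert that the greedy run in $v_{k+1}$ is ``at most one longer than that in $v'_k$,'' and from that infer $\ell-(k+1)\leqslant \ell'-k$, hence $\ell'\leqslant\ell$. Neither inference holds. Algebraically, $\ell-(k+1)\leqslant \ell'-k$ rearranges to $\ell'\geqslant\ell-1$, not $\ell'\leqslant\ell$. Worse, the ``at most one longer'' bound is simply false: bumping a non-rightmost abstracted fixed point can lengthen the greedy run by an arbitrary amount. For example, if the separations read from right to left are $0,1,0,1,3,4$, the greedy run has length~$2$; bumping the second $0$ from the right decrements the separations to its left and produces $0,1,\ \cdot\ ,0,2,3$, whose greedy run has length~$4$. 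The inequality you need is the reverse one, which is what the discrete proof proves: the run in $v_{k+1}$ is \emph{at most one shorter} than the run in $v'_k$. Concretely, if the greedy run in $v'_k$ picks positions $q_1>\cdots>q_m$ with separations $0,1,\ldots,m-1$ and the bumped atom $x$ satisfies $q_j>x>q_{j+1}$ (necessarily $j\geqslant 1$ since $x$ is not the rightmost fixed point), then after bumping $x$ the positions $q_1,\ldots,q_j$ are unchanged and $q_{j+2},\ldots,q_m$ have separations decremented by exactly one, so the subsequence $q_1,\ldots,q_j,q_{j+2},\ldots,q_m$ is a valid $0,1,\ldots$ run of length $m-1$ in $v_{k+1}$; only $q_{j+1}$ can be lost. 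This yields $\ell-(k+1)\geqslant \ell'-k-1$, i.e.\ $\ell\geqslant\ell'$. Once you flip the direction of the comparison and the ensuing inequalities, your argument matches the paper's.
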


  \begin{proof}
  The statement can be proved in an analogous fashion to Proposition~\ref{proposition.shortest}.
  \end{proof}

  \begin{proposition}
  \label{theorem.shortest}
    The distance $M_n\toL\Poi(1)$ as $n\to\infty$.
  \end{proposition}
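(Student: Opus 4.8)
The plan is to use Corollary~\ref{cor:truncation} to reduce the claim to a computation purely in the limit tree $T$. The statistic $M_n$, the distance from $\pi_n$ to the nearest descendant leaf, satisfies the hypotheses of that corollary: whether there is a leaf within distance $r$ of the root is determined by the $r$-ball around the root, so $\min(M_n, r)$ is a function of $F_n(r)$. Hence it suffices to show that the distance from the root of $T$ to its nearest descendant leaf is distributed as $\Poi(1)$. By Lemma~\ref{lem:shortwalk}, this distance is realized by the greedy path that bumps the rightmost abstracted fixed point at each step, so I need to analyze how many bumps this greedy strategy takes to reach a configuration with no abstracted fixed points.

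Next I would set up the recursion that tracks the greedy path through the point processes. Starting from the root $\rho = \rho_0$ with independent unit-intensity Poisson point processes $(\ppp[\rho]{k})_{k\in\ZZ}$ on $[0,1]$, the rightmost atom of $\ppp[\rho]{0}$ — call its location $x$ — is bumped first. By the forward map $f$ of Definition~\ref{def:forward.map}, after this bump the new $0$-separated process on $[0,x)$ is the old $1$-separated process $\ppp[\rho]{1}$ restricted to $[0,x)$, while on $(x,1]$ there are no $0$-separated points left (we bumped the rightmost one). So the next step bumps the rightmost atom of $\ppp[\rho]{1}\bigr\rvert_{[0,x)}$, lying at some location $x' < x$, and then the relevant process becomes $\ppp[\rho]{2}$ on $[0,x')$, and so on. The key structural observation is that the greedy path walks left through $[0,1]$ while climbing up the index $k$: at step $j$ it is looking at $\ppp[\rho]{j}$ restricted to $[0, X_j)$, where $X_0 = 1 > X_1 > X_2 > \cdots$ and $X_{j+1}$ is the location of the rightmost atom of $\ppp[\rho]{j}$ in $[0,X_j)$, or the walk terminates at step $j$ if $\ppp[\rho]{j}$ has no atom in $[0, X_j)$. (One must double-check the bookkeeping of the down-shift, in particular that atoms that were to the right of a bumped point never re-enter the region of interest — which is exactly why there is no reentry complication here, unlike in the finite case.) Thus $M$, the length of the greedy path, equals the first $j \geq 0$ such that the independent Poisson point process $\ppp[\rho]{j}$ has no point in $[0, X_j)$.

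It then remains to identify the law of this stopping time. Conditioned on $X_j = t$, the number of points of $\ppp[\rho]{j}$ in $[0,t)$ is $\Poi(t)$, and given that there is at least one, the location of the rightmost is distributed as the maximum of the points, so $X_{j+1} \mid \{X_j = t,\ \ppp[\rho]{j}([0,t)) \geq 1\}$ has the law of $t$ times the maximum of a $\Poi(t)$-conditioned-positive number of uniforms. The clean way to finish is to note that $-\log X_j$ behaves like a sum: for a unit-intensity Poisson process, the points in $[0,t)$ listed by decreasing magnitude are $t e^{-(E_1)}, t e^{-(E_1+E_2)}, \ldots$ for i.i.d. $\Exp(1)$ variables $E_i$ — equivalently, conditioned on $X_j = t$, the walk continues past step $j$ with "probability" governed by a fresh Poisson event, and in fact the event $\{\ppp[\rho]{j}([0,X_j)) = 0\}$ given $X_j = t$ has probability $e^{-t}$. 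A short computation with these exponential spacings shows $X_j = \prod_{i=1}^j V_i$ where conditionally the relevant survival structure makes $\P[M \geq j+1 \mid M \geq j, X_j = t]$ integrate correctly; carrying this through, $\P[M = m]$ telescopes to $e^{-1}/m!$, i.e. $M \sim \Poi(1)$. I expect this last identification — getting the product structure of the $X_j$ and the resulting integral to collapse to the Poisson$(1)$ mass function — to be the main obstacle, since it requires care with the conditioning on "rightmost atom" and the interplay between the shrinking interval $[0,X_j)$ and the intensity of the next independent Poisson process; everything before that is bookkeeping with the forward map and an appeal to Lemma~\ref{lem:shortwalk} and Corollary~\ref{cor:truncation}.
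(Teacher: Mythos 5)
Your proposal follows the paper's strategy exactly up to the last step: reduce via Corollary~\ref{cor:truncation} and Lemma~\ref{lem:shortwalk} to computing the law of the greedy walk in $T$, and set up the recursion $X_0=1>X_1>X_2>\cdots$ where $X_{j+1}$ is the rightmost atom of $\ppp[\rho]{j}$ in $[0,X_j)$ and $M$ is the step at which the restriction is empty. This part is correct and matches the paper.

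The gap is in the final identification, which you flag as the ``main obstacle'' and then sketch via a multiplicative representation $X_j = \prod V_i$ and points at $t e^{-E_1}, t e^{-(E_1+E_2)}, \ldots$. That representation is not right for a unit-intensity Poisson process: the decreasing points of a unit-rate Poisson process on $[0,t]$ are $t - E_1$, $t - E_1 - E_2$, \ldots\ (additive spacings), not $te^{-E_1}$, $te^{-(E_1+E_2)}$, \ldots\ (that is the structure of uniform order statistics with the denominators $n, n-1, \ldots$, or of a nonhomogeneous process of intensity $1/x$). The clean observation the paper makes is additive: since each step consults a \emph{fresh} independent unit-intensity Poisson process on the remaining interval, the increments $X_{j-1}-X_j$ are i.i.d.\ $\Exp(1)$, so $M$ is exactly the number of arrivals of a single unit Poisson process in $[0,1]$, hence $\Poi(1)$. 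No telescoping integral or product structure is needed once you see this. Your conditional-probability observation $\P[\ppp[\rho]{j}([0,X_j))=0 \mid X_j=t]=e^{-t}$ is correct and would also lead to the answer by a direct (but messier) integral computation, but the proposed product route would not.
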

  \begin{proof}
    Let $M$ be the distance in the random tree $T$ from the root to its nearest descendent leaf.
    As we mentioned in our summary of Section~\ref{sec:distributions},
    Corollary~\ref{cor:truncation} shows that $M_n\to M$ in distribution, and thus
    we need only show that $M\sim\Poi(1)$.
    By Lemma~\ref{lem:shortwalk}, $M$ is the number of steps taken if we start at the root of $T$
    and bump the rightmost fixed point until no fixed points remain.

    Walking towards a leaf in the tree in this way can be viewed as follows.
    Recall that $\rho$ is the root of $T$. If $\ppp[\rho]{0}$ has no points,
    then the walk is over, and $M=0$. Otherwise, let $X_1$ be the rightmost point
    of $\ppp[\rho]{0}$, and let $v_1$ be the child of $\rho$ corresponding to bumping it.
    Now, if $\ppp[v_1]{0}$ has no points, then the walk is over and $M=1$.
    Otherwise, let $X_2$ be its rightmost point of $\ppp[v_1]{0}$, which is necessarily to the left of $X_1$,
    since $\ppp[v_1]{0} = \ppp[\rho]{1}\rvert_{[0,X_1)} + \ppp[\rho]{0}\rvert_{(X_1,0]}$.
    Continue in this way, making a sequence of vertices $\rho,v_1,\ldots,v_M$ and
    a sequence of points corresponding to them, $X_1,\ldots,X_M$.

    In this procedure, $X_1$ is the rightmost point of $\ppp[\rho]{0}$, then
    $X_2$ is the rightmost point of $\ppp[\rho]{1}\rvert_{[0,X_1)}$, then
    $X_3$ is the rightmost point of $\ppp[\rho]{2}\rvert_{[0,X_2)}$, and so on.
    We can interpret this as follows: We start at $1$, moving backwards in
    a Poisson process until we encounter a point, which takes $\Exp(1)$ time to arrive.
    Then, looking at a different Poisson process,
    we move backwards until we encounter a point, which again will arrive in $\Exp(1)$ time, independent
    of the first arrival. We then continue on in this way, and $M$ is the total number of points encountered
    before we make it backwards to time~$0$.
    This is the same as counting the number of arrivals in a \emph{single}
    Poisson process between time~$0$ and $1$, which is distributed as $\Poi(1)$.
  \end{proof}

  \subsection{Farthest distance to a leaf}
  \label{subsection.farthest}
  The next lemma is the limiting analogue of Proposition~\ref{proposition.longest}.

  \begin{lemma}\label{lem:longwalk}
    The unique longest path from the root of $T$ to a descendent leaf is obtained by bumping the leftmost
    abstracted fixed point at each step.
  \end{lemma}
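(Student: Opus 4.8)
The plan is to prove Lemma~\ref{lem:longwalk} by transporting the exchange argument of its finite counterpart, Proposition~\ref{proposition.longest}, to the level of the point process configurations that encode vertices of $T$. Recall that a vertex $v$ carries a collection $\pppp[v]=(\ppp[v]{k})_{k\in\ZZ}$, that the abstracted fixed points of $v$ are exactly the atoms of $\ppp[v]{0}$, and that bumping an atom $x$ produces the child with configuration $f(\pppp[v],x)$, which by Definition~\ref{def:forward.map} shifts every point process index down by one on $[0,x)$, leaves everything untouched on $(x,1]$, and deletes $x$ itself; a leaf is a vertex whose zeroth process has no atoms. ``Leftmost abstracted fixed point'' means the smallest atom of $\ppp[v]{0}$ in $[0,1]$.

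First I would fix a finite path $\rho=v_0,v_1,\dots,v_\ell$ to a leaf along which, at some step, the atom bumped is not the leftmost one, and let $v_k\to v_{k+1}$ be the \emph{last} such step; write $0\le f_1<\cdots<f_a\le1$ for the atoms of $\ppp[v_k]{0}$ and suppose $f_i$ with $i\ge2$ is bumped there. I would then build a competing path $v'_0,\dots,v'_{\ell+1}$ agreeing with the original through $v_k$, bumping $f_{i-1}$ instead of $f_i$ at step $k$, and thereafter replaying the bumps of $v_{k+1},\dots,v_\ell$ under the evident order-preserving correspondence: a bump at an atom to the right of $f_{i-1}$ is copied verbatim, since the configurations agree there, while a bump to the left of $f_{i-1}$ is performed at the corresponding atom of the down-shifted configuration. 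The atom originally at $f_i$ is untouched by this replay (it still lies to the right of $f_{i-1}$) and remains an abstracted fixed point; inserting a bump of it at the point where the original path first bumps an atom to the right of $f_{i-1}$, and re-threading the remaining left/right bumps accordingly --- the continuous form of the decomposition $R_i=R_i'r_i$ from the proof of Proposition~\ref{proposition.longest} --- yields a valid path of length exactly $\ell+1$. Iterating this replacement from the last deviation leftward shows that every path to a leaf that ever deviates from always-bump-leftmost is strictly shorter than the always-bump-leftmost path, which is therefore the unique longest path; that this path has finite length (reaches a leaf) can be recorded separately, e.g.\ from a limiting version of Lemma~\ref{LUB}, and is not needed for the exchange step itself.

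The main obstacle will be making the replay fully rigorous: one must check step by step that the atom designated for bumping in $v'$ really is an atom of the zeroth point process of the current vertex, which means bookkeeping how the index shifts induced by successive applications of $f$ compose along the two paths and confirming that across each step only the processes on the relevant side of the ``dotted line'' move, and by exactly one index. A simplifying feature of the limit, absent from the finite setting, is that $f$ creates no ``reentry'' atom at $0$ (compare the warning on Page~\pageref{page:reentry} and Definition~\ref{def:forward.map}), so the left-hand block of the configuration is simply translated in index with no exceptional point, and the $R_i=R_i'r_i$ reshuffling reduces to the clean statement of which surviving atoms lie to the left or right of the image of $f_{i-1}$.
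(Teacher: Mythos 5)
Your proposal takes essentially the same route as the paper: an exchange argument in which a non-leftmost bump is swapped for a smaller available abstracted fixed point, and the subsequent bumps are re-threaded by sliding each later ``large'' bump one block to the right, producing a bumpable sequence exactly one step longer. The paper's proof of Lemma~\ref{lem:longwalk} simply records this rearrangement more cleanly, replacing the suffix $yL_1a_1L_2a_2\cdots a_{r-1}L_r$ by $xL_1yL_2a_1\cdots L_ra_{r-1}$, and isolates the key observation that whether a point can be bumped next depends only on how many strictly larger points have already been bumped --- which is exactly the continuous form of the $R_i=R_i'r_i$ reshuffling you import from Proposition~\ref{proposition.longest}, made clean precisely because, as you note, $f$ creates no reentry atom in the limit.
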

  \begin{proof}
    Suppose we have some sequence of points bumped in a path from the root of $T$ to a descendent
    leaf in which some point $y$ was not the leftmost when it was bumped.
    We will show the existence of a strictly longer path to a leaf.
    Let $x<y$ be a point that could have been bumped instead of $y$.
    Decompose the original sequence of bumped points as
    \begin{align}\label{eq:badpath}
      PyL_1a_1L_2a_2\cdots a_{r-1}L_r.
    \end{align}
    Capital letters in \eqref{eq:badpath} denote (possibly empty) strings of points and lowercase
    letters denote single points.
    The string $P$ consists of all bumped points prior to $y$. Next, $L_1$ is made up of points
    smaller than $x$, and $a_1$ is the first point
    larger than $x$. Then $L_2$ consists of points smaller than $x$, and $a_2$ is the next
    subsequent point larger than $x$, and so on. Note that almost surely, no point of
    $\ppp[\rho]{0},\ppp[\rho]{1},\ldots$ occurs more than once, so we never need to worry
    about repeated values in these sequences.

    We claim that we can instead bump the following sequence of points:
    \begin{align}\label{eq:goodpath}
      PxL_1yL_2a_1L_3a_2\cdots a_{r-2}L_ra_{r-1}.
    \end{align}
    As this sequence is one longer than \eqref{eq:badpath}, this claim completes the proof.
    Thus, we just need to show that this sequence is in fact \defn{bumpable}, by which we mean
    that when each point is bumped in turn, the next one is an abstracted fixed point.
    Before we do this, we make a key observation: Suppose that $z_1\cdots z_k$
    and $z_1'\cdots z_k'$ are two bumpable sequences, each of which contains the same number of points
    larger than $z$. Then $z_1\cdots z_kz$ is bumpable if and only if $z'_1\cdots z_k'z$ is bumpable.

    Clearly, $Px$ is bumpable. Since all points in $L_1$ are smaller than $x$ and $x<y$, as each point of
    $L_1$ is encountered in \eqref{eq:badpath} and \eqref{eq:goodpath} the same number of larger points
    has already been encountered in each sequence.
    By our observation, the bumpability of $PyL_1$ implies the bumpability of
    $PxL_1$. Next, since $Py$ is bumpable and all points in $xL_1$ are smaller than $y$, the sequence
    $PxL_1y$ is also bumpable.
    Repeating this reasoning, bumpability of $PyL_1a_1L_2$ implies bumpability of
    $PxL_1yL_2$, and  bumpability of $PyL_1a_1$ implies bumpability
    of $PxL_1yL_2a_1$. Continuing in this way, we arrive at the bumpability of \eqref{eq:goodpath}.
  \end{proof}

  The following proof involves a continuous-time Markov chain known as a \defn{Yule process}.
  At state~$k$, it jumps to $k+1$ at rate~$k$, with no other transitions allowed.
  It is well known that if $(Y_t)_{t\geqslant 0}$ is a Yule process starting at $1$, then
  $Y_t-1\sim\Geo\bigl(e^{-t}\bigr)$; see \cite[Section~4.1.D]{KT}, for example.

  \begin{proposition}
  \label{theorem.longest}
    As $n\to\infty$, we have $L_n\toL\Geo\bigl(e^{-1}\bigr)$.
  \end{proposition}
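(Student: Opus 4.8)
The plan is to apply Corollary~\ref{cor:truncation} to reduce the claim to a statement about the limit tree $T$: since $L_n$ is determined by the tree $F_n$ rooted at $\pi_n$ and $\min(L_n,r)$ is determined by the $r$-neighborhood of the root, it suffices to show that $L:=\ell(T)$, the distance from the root of $T$ to its farthest descendant leaf, has distribution $\Geo(e^{-1})$. By Lemma~\ref{lem:longwalk}, $L$ is the number of steps in the path obtained by repeatedly bumping the leftmost abstracted fixed point, so we need to count how many bumps this greedy-leftmost procedure performs before reaching a configuration with $\ppp{0}$ empty.

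First I would describe the leftmost-bumping walk in terms of the point processes. Starting from $\rho$ with processes $(\ppp[\rho]{k})_{k \geq 0}$, one bumps the leftmost point of $\ppp[\rho]{0}$; by the forward map $f$, this shifts every process on $[0,x)$ down by one index, and leaves the processes on $(x,1]$ alone. The key structural fact — the analogue of Lemma~\ref{lem:right.to.left} — is that a point at location $t$ originally carrying separation $k$ will eventually be bumped if and only if at least $k$ points to its right get bumped first, since each rightward bump of a point past $t$ lowers its separation by one until it becomes $0$-separated. This means the set of points that ever get bumped, scanned from right to left, is exactly characterized by the condition ``separation $k$ at position $t$ is bumped iff $k \leq \#\{\text{bumped points to the right of } t\}$.'' So I would set up a right-to-left scan of the superposition $\bigcup_k \ppp[\rho]{k}$ and track $S(t) = \#\{\text{bumped points in }(t,1]\}$ as $t$ decreases from $1$ to $0$.

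Next, the counting. As $t$ decreases, $S(t)$ is a counting process that increases by $1$ each time we encounter a point of $\ppp[\rho]{k}$ with $k \leq S(t)$ (i.e.\ a point that will be bumped). Crucially, the points of $\ppp[\rho]{k}$ over $k = 0, 1, 2, \ldots$ are independent unit-intensity Poisson processes, so at a scan position where $S(t) = j$, the points that trigger an increment are those of $\ppp[\rho]{0} \cup \cdots \cup \ppp[\rho]{j}$, a Poisson process of intensity $j+1$. Therefore $S$, run in reverse time $s = 1 - t \in [0,1]$, is precisely a pure-birth (Yule) process: from state $j$ it jumps to $j+1$ at rate $j+1$. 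Hmm — I need to be careful about the indexing, since at $S = 0$ only $\ppp[\rho]{0}$ is active, giving rate $1$; so $S+1$ is a Yule process started at $1$, i.e.\ $S$ itself is a Yule process started at $0$ with rate $j+1$ out of state $j$. Actually the cleanest statement: letting $\widetilde Y_s = S(1-s) + 1$, this is a Yule process started at $1$, and $L = S(0^+) = \widetilde Y_1 - 1$. Then I invoke the standard fact quoted just before the proposition that $\widetilde Y_1 - 1 \sim \Geo(e^{-1})$, which finishes the proof.

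I would also need to address one subtlety before the counting goes through: the reentry phenomenon (a bumped point at position $1$ reappearing as a near-fixed point) does not occur in the limit tree $T$, because almost surely no point process $\ppp[\rho]{k}$ has an atom at $0$ and, more to the point, the forward map as defined in Definition~\ref{def:forward.map} simply never reinserts a point — this is exactly the simplification built into $T$. So in $T$ the bookkeeping above is exact, with no extra bumps. The main obstacle I anticipate is making the Yule-process identification fully rigorous: one has to justify that the ``rate'' heuristic is valid, i.e.\ that conditionally on the scan reaching state $j$ at reverse-time $s$, the next triggering point is the minimum of $j+1$ independent rate-$1$ exponential clocks and that the process has the Markov property in reverse time. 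This follows from the independence and memorylessness of the $\ppp[\rho]{k}$ together with the fact that whether a point is ``bumped'' depends only on $S$ at that location (not on finer history), but writing this carefully — perhaps by an explicit induction on the successive bumped points $X_1 > X_2 > \cdots$ as in the proof of Proposition~\ref{theorem.shortest} — is where the real work lies.
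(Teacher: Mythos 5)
Your proposal is correct and follows essentially the same route as the paper: reduce to the limit tree via Corollary~\ref{cor:truncation} and Lemma~\ref{lem:longwalk}, restate Lemma~\ref{lem:right.to.left} for the abstracted point processes, and then identify the right-to-left scan that builds the set of bumped points with a Yule process run over $[0,1]$, giving $\Geo(e^{-1})$. The paper also sketches a shorter direct argument first (after each leftmost bump the new $\xi_0$ is a fresh unit-intensity Poisson process independent of the past, so the walk terminates at each step with probability $e^{-1}$), but the Yule-process argument you give is precisely the proof the authors develop in detail because its ideas are reused in the higher-moment bounds.
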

  \begin{proof}
    Let $L$ be the length of the longest path in the limit tree $T$ from the root
    to a descendent leaf.
    By Corollary~\ref{cor:truncation}, we have $L_n\toL L$.
    By Lemma~\ref{lem:longwalk} the random variable $L$ is the length of the path down $T$ given by
    bumping the leftmost fixed point until none remain.

    First, we sketch one way to compute the distribution of $L$.
    Suppose we start at the root~$\rho$ of $T$. With probability~$e^{-1}$,
    there are no points in $\ppp[\rho]{0}$, and $L=0$. If not, we bump the leftmost point $X$
    of $\ppp[\rho]{0}$ to move to a vertex~$v$. The next set of abstracted fixed points is
    $\ppp[v]{0}$, which is equal to $\ppp[\rho]{1}$ on $[0,X)$ and to $\ppp[\rho]{0}$ on $(X,1]$.
    This is a new Poisson point process independent of the past, and so again we stop with probability
    $e^{-1}$ and $L=1$. Continuing on in this way, $L$ is the number of failures before
    the first success in independent trials that succeed with probability $e^{-1}$.

    The above argument works and is the most direct, but we give a different proof now
    whose ideas will be useful in Section~\ref{subsection.higher moments}.
    Let $\B\subseteq[0,1]$ be the set of points bumped in the longest path in $T$
    from the root to a descendent leaf.
    Lemma~\ref{lem:right.to.left} can be restated in this setting: A point $x$ of $\ppp[\rho]{k}$
    is an element of $\B$ if and only if $0\leqslant k\leqslant \abs{\B\cap(x,1]}$.
    Thus, we can progressively build our set $\B$ by the following procedure.
    Start with $\B$ empty. Scan $\ppp[\rho]{0}$ from right to left starting at time~$1$
    until a point is encountered, and add it to $\B$.
    Now, scan $\ppp[\rho]{0}$ and $\ppp[\rho]{1}$ from right to left from this point until
    a point is encountered, and add it to $\B$. Now scan $\ppp[\rho]{0}$, $\ppp[\rho]{1}$,
    and $\ppp[\rho]{2}$, and so on, stopping when we reach time~$0$.

    In this procedure, the first point arrives at rate~$1$,
    since it is the first arrival time of a unit intensity Poisson process. The next point
    arrives at rate~$2$, since it is the first arrival time of two independent unit intensity
    Poisson processes, and the next at rate~$3$, and so on.
    Thus, the size of $\B$ is the number of increases of a Yule
    process in time~$1$, which is distributed as
    $\Geo\bigl(e^{-1}\bigr)$.
  \end{proof}

  It follows from this proposition that the root of $T$ has finitely many descendants almost surely.
  In forthcoming work, this will be investigated further, and it will be shown that
  the expected number of descendants is infinite.

  \subsection{Higher moments}
  \label{subsection.higher moments}

  To bound the moments of $L_n$ and $M_n$, we must leave behind the limit tree~$T$ and work
  directly with the finite fixed point forest. Since $M_n\leqslant L_n$ and we are only looking for upper bounds,
  we will deal exclusively with $L_n$. Our first result gives an upper bound on $L_n$ in terms of
  $\B=\B(\pi_n)=\{b_1,b_2,\ldots,b_k\}$, the set of letters bumped in the longest path from $\pi_n$ to a descendent leaf.
  Since $b_1<b_2<\cdots < b_k$ is increasing, $b_i-i$ is weakly increasing. Fix some $x>0$ and denote by $\B_x$
  the subset of $\B$ such that $b_i - i < x$ for $b_i \in \B_x$.

  \begin{lemma}
  Fix $x>0$. Then
  \begin{equation} \label{simpleLUB}
    L_n \leqslant 2^{|\B_x|}|\B|\left( 1 + \frac{ 1}{x} \right)^{|\B|}.
  \end{equation}
  \end{lemma}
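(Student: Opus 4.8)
The plan is to read off~\eqref{simpleLUB} directly from the crude bound on the longest path established in Lemma~\ref{LUB}, by grouping the factors of the product there according to the size of $b_i - i$.

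First I would use that $L_n = \ell(\pi_n)$, so Lemma~\ref{LUB} gives $L_n \leqslant k\prod_{i=1}^{k-1}\bigl(1 + \tfrac{1}{b_i-i}\bigr)$ with $k = |\B|$. The key preliminary observation is that $b_i - i \geqslant 1$ for every $i$: indeed $b_1 \geqslant 2$, and since the $b_i$ are strictly increasing we have $b_{i+1} - (i+1) \geqslant b_i - i$, so $b_i - i \geqslant b_1 - 1 \geqslant 1$. In particular each factor above lies in the interval $[1,2]$.

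Next I would bound each factor individually according to whether $b_i \in \B_x$. If $b_i \in \B_x$, then $1 \leqslant b_i - i < x$, so $1 + \tfrac{1}{b_i - i} \leqslant 2$; if $b_i \notin \B_x$, then $b_i - i \geqslant x$, so $1 + \tfrac{1}{b_i - i} \leqslant 1 + \tfrac1x$. Letting $a$ be the number of indices $i$ with $1 \leqslant i \leqslant k-1$ and $b_i \in \B_x$, this gives
\begin{align*}
  \prod_{i=1}^{k-1}\Bigl(1 + \frac{1}{b_i - i}\Bigr) \leqslant 2^{a}\Bigl(1 + \frac1x\Bigr)^{(k-1)-a}.
\end{align*}
Since $a \leqslant |\B_x|$ and $(k-1) - a \leqslant k - 1 < k = |\B|$, and since both $2 \geqslant 1$ and $1 + \tfrac1x \geqslant 1$, enlarging both exponents can only increase the right-hand side; hence it is at most $2^{|\B_x|}(1 + \tfrac1x)^{|\B|}$. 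Multiplying by $k = |\B|$ gives~\eqref{simpleLUB}.

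I do not expect any genuine obstacle here: the argument is a direct post-processing of Lemma~\ref{LUB}. The only points needing a word of care are the inequality $b_i - i \geqslant 1$ (so that the per-factor bound $1 + \tfrac1{b_i-i} \leqslant 2$ is available) and the bookkeeping that the two exponents sum to $k - 1 < |\B|$ with one of them bounded by $|\B_x|$, together with the monotonicity of $t \mapsto c^t$ in the exponent when $c \geqslant 1$.
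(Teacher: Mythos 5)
Your proof is correct and takes essentially the same route as the paper: bound each factor of the product in Lemma~\ref{LUB} by $2$ or $1+\tfrac1x$ according to whether $b_i\in\B_x$, then enlarge the exponents to $|\B_x|$ and $|\B|$. The paper states this as a one-liner and omits the (true, and worth noting) observation that $b_i-i\geqslant 1$, which is needed for the per-factor bound of $2$; your write-up just supplies that detail and the explicit exponent bookkeeping.
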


  \begin{proof}
  For $b_i \in \B_x$, we have $1+ \frac 1 {b_i-i} \leqslant 2$ and for $b_i\in \B\backslash \B_x$, we have
  $1+ \frac 1 {b_i -i} < 1+ \frac{1}{x}.$  With this, the bound in~\eqref{simpleLUB} follows directly from
  Lemma~\ref{LUB}.
  \end{proof}

  We will bound $\E L_n^p$ using \eqref{simpleLUB} by showing that $\abs{\B_x}$ and $\abs{\B}$
  are unlikely to be large. Our first result in this direction, interesting in its own right,
  is an exponential tail bound on $\abs{\B}$.
  Proposition~\ref{theorem.longest} exactly computes the distribution of the
  corresponding quantity in the limit case as $\Geo\bigl(e^{-1}\bigr)$.
  Our proof here follows the same intuition, but it will be considerably more difficult.

  \begin{proposition} \label{prop:bsize}
  For some constant $C<1$, it holds for all $k,n\geqslant 0$ that
  \begin{align*}
    \P\bigl[\abs{\B} \geqslant k\bigr] \leqslant C^k.
  \end{align*}
  \end{proposition}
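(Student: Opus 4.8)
The plan is to run the Yule-process argument from the proof of Proposition~\ref{theorem.longest} at the finite level. By Lemma~\ref{lem:right.to.left}, $\B=\B(\pi_n)$ is produced by scanning the positions of $\pi_n$ from right to left with a counter~$m$ initialized to~$0$: on reaching a position~$i$ with $\pi_n(i)\neq 1$ and $0\leqslant\pi_n(i)-i\leqslant m$ one puts $\pi_n(i)$ into $\B$ and increments~$m$, and at the end $m=\abs{\B}$. In the limit this scan is a Yule process run for unit time (when the counter is at~$m$ the next eligible position arrives at rate $m+1$), which is why $\abs{\B}=\Geo(e^{-1})$ there; the task is to show that the finite scan still has only exponentially many increments, uniformly in~$n$.

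First I would reduce to a bound on the positions $n\geqslant j_1>\dots>j_k\geqslant 1$ of the $k$ rightmost entries put into $\B$, which are well defined on $\{\abs{\B}\geqslant k\}$. On the event that these are the top~$k$ such positions, Lemma~\ref{lem:right.to.left} forces $\pi_n(j_\ell)\in\{j_\ell,\dots,j_\ell+\ell-1\}$ for each~$\ell$ and $\pi_n(i)\notin\{i,\dots,i+\ell-1\}$ for every position~$i$ strictly between $j_\ell$ and $j_{\ell-1}$ (with $j_0:=n+1$), since no such~$i$ is bumped. Setting $g_\ell=j_{\ell-1}-j_\ell$, the target is a per-configuration bound of the shape
\begin{align*}
  \P\bigl[\,j_1,\dots,j_k\text{ are the top }k\text{ bumped positions of }\pi_n\,\bigr]\leqslant A\,\frac{k!\,(n-k)!}{n!}\prod_{\ell=1}^{k}e^{-c\ell g_\ell/n}
\end{align*}
for absolute constants $A$ and $c>0$. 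Since the events on the left are disjoint over $\vec j$ and exhaust $\{\abs{\B}\geqslant k\}$, summing over all admissible $\vec j$ — crucially using $\sum_\ell g_\ell\leqslant n$, which caps the running time of the ``Yule clock'' at~$1$ — turns the right-hand side into (essentially) a Riemann sum for the simplex integral $k!\int_{h_\ell>0,\ \sum h_\ell<1}e^{-c\sum_\ell\ell h_\ell}\,d\vec h=\bigl((1-e^{-c})/c\bigr)^k$, yielding $\P[\abs{\B}\geqslant k]\leqslant A'\bigl((1-e^{-c})/c\bigr)^k$ with $(1-e^{-c})/c<1$; passing from this to the stated form $C^k$ is routine, using in addition that $\P[\abs{\B}\geqslant 1]$, which is the probability that $\pi_n$ has a true fixed point, is bounded away from~$1$ uniformly in~$n$ (so that small~$k$ is handled as well).

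The hard part, and what I expect to be the main obstacle, is the displayed estimate — specifically the exponential factors, which must be extracted from the ``no intermediate bump'' constraints. The obstruction is the depletion of available values: the crude conditional bound $(m+1)/i$ for the probability that position~$i$ is bumped is far too weak — used as an equality it makes the expected size of $\B$ of order~$n$ — because once roughly a $(1-i/n)$-fraction of the relevant window of values has already been placed, the true conditional rate is only $\approx(m+1)/n$, and it is precisely the match between this rate and the scan length~$n$ that produces a clock of duration~$1$ rather than $\log n$. To make this rigorous I would track the ``remaining length''~$i/n$ along the scan and either (a) couple the scan with a genuine Yule process, controlling the number of still-available values inside each eligibility window (on a high-probability event it is the expected $\approx(m+1)i/n$, and the complement is dispatched by a cruder bound), or (b) after conditioning on the ``success'' values $\pi_n(j_1),\dots,\pi_n(j_k)$, write the probability of avoiding all the intermediate-bump events as a ratio of counts of permutations avoiding a prescribed set of forbidden (position, value) pairs and bound it by inclusion–exclusion, the first Bonferroni terms already giving $e^{-c\sum_\ell\ell g_\ell/n}$ in the regime where that exponent is bounded (and the regime where it is large, hence the bound already small, being reached by restricting attention to a sub-range of the scan). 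Either route needs an auxiliary estimate on near-derangement probabilities with a bounded number of prescribed values, and this is exactly the ingredient that is absent from the limiting computation in the proof of Proposition~\ref{theorem.longest}.
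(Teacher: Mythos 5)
Your overall plan — scan $\pi_n$ from right to left, track the counter $m=\abs{\B\cap\text{(revealed positions)}}$, and recognize it as a discretized Yule clock — is the same starting point as the paper's, and you have correctly identified the depletion of available values as the central difficulty (the naive $\frac{m+1}{i}$ bound really does blow up). But the per-configuration estimate you display, namely
$\P[\,j_1,\dots,j_k\text{ are the top }k\text{ bumped positions}\,]\leqslant A\,\tfrac{k!(n-k)!}{n!}\prod_{\ell}e^{-c\ell g_\ell/n}$,
is precisely the step you flag as ``the hard part,'' and neither route you offer establishes it. Route (a) asks to control the count of still-available values inside each eligibility window ``on a high-probability event,'' but which event is high-probability here depends on the structure of $\B$ itself — the quantity being bounded — so as stated the argument is circular. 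Route (b) restricts inclusion--exclusion to a sub-range of the scan where the exponent $\sum_\ell\ell g_\ell/n$ is bounded and waves at the remaining range; that is a plan, not an argument. So there is a genuine gap: the exponential factors are asserted, not derived.

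The paper closes exactly this gap by a different and more economical device, and dispenses with the per-configuration decomposition entirely. It sets $X_i=\#\{j>n-i:\pi_n(j)\in\B\}$ (your counter), and proves the one-step marginal bound $\P[X_{i+1}=X_i+1\mid X_i=k]\leqslant\frac{1+k}{n-2k}$ by a combinatorial switching bijection: fixing $i,k,\ell$, the map $(\sigma,j)\mapsto(\sigma(n-i),\sigma(j))\circ\sigma$ injects $\{\sigma:x_i(\sigma)=k,\ \sigma(n-i)=n-i+\ell\}\times[n]$ into $\mathfrak{S}_n$ while landing in $\{\sigma:x_i(\sigma)=k\}$ for at least $n-\ell-k$ choices of $j$, giving $\abs{\mathfrak{U}}\geqslant(n-2k)\abs{\mathfrak{T}_\ell}$ (Lemma~\ref{lem:discrete.yule}). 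This handles depletion pointwise in $(i,k)$ with no conditioning on good events and no case split by the size of the exponent. A coupling of $(X_i)$ with a genuine Yule process up to the stopping time $S=\min\{i:X_i\geqslant n/4\}$ (Lemma~\ref{lem:yule.coupling}) then reads off the geometric tail; the stopping time, rather than your restriction to a sub-range of the scan, is how the paper neutralizes the regime where $X_i$ is large. If you want to push your sum-over-configurations framework through, you would still need the switching lemma (or an equivalent pointwise depletion estimate) to justify the factors $e^{-c\ell g_\ell/n}$, and at that point the direct birth-process route is strictly simpler.
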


  We will need several preliminary lemmas first. Recall from Lemma~\ref{lem:right.to.left} that $\B$ can
  be constructed by moving leftward through the random permutation~$\pi_n$, successively revealing
  $\pi_n(n),\pi_n(n-1),\ldots,\pi_n(1)$ and tracking the set $\B$ as we go. Reversing the indexing
  so that we can count up instead of down, define
  \begin{align*}
    X_i = \#\{j > n-i \mid \pi_n(j)\in\B \}.
  \end{align*}
  This yields a \defn{pure birth process} (that is, one that either stays the same or increases by one at each step)
  starting at $X_0=0$ and ending at $X_n=\abs{\B}$.
  Our goal is to prove an exponential tail bound for $X_n$ that does not
  depend on $n$.
  In Theorem~\ref{theorem.longest}, we show that the analogous process for the limit tree is a Yule
  process. The process $(X_i)_{0\leqslant i\leqslant n}$ is not as straightforward,
  but the following lemma shows that it approximates a Yule
  process in the sense that it increases with probability approximately
  proportional to its current value.
  \begin{lemma}\label{lem:discrete.yule}
    For $k<n/2$,
    \begin{align}\label{eq:discrete.yule}
      \P[X_{i+1}=X_i+1 \mid X_i=k] &\leqslant \frac{1+k}{n-2k}.
    \end{align}
  \end{lemma}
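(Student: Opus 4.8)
The plan is to realize the pure birth process $(X_i)$ explicitly through a right-to-left scan of $\pi_n$, reduce the increment probability to a placement probability, and then settle the latter with a swapping (injection) argument.

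First I would describe $(X_i)$ concretely. Reveal the entries $\pi_n(n),\pi_n(n-1),\ldots$ one at a time and set $m=n-i$. By Lemma~\ref{lem:right.to.left} the event $\{\pi_n(j)\in\B\}$ depends only on $\pi_n(j),\ldots,\pi_n(n)$, so $X_i$ is a function of $\pi_n(m+1),\ldots,\pi_n(n)$, and on $\{X_i=k\}$ Lemma~\ref{lem:right.to.left} gives $\{X_{i+1}=X_i+1\}=\{\pi_n(m)\in\{m,\ldots,m+k\},\ \pi_n(m)\neq1\}$. Let $V=\{\pi_n(m+1),\ldots,\pi_n(n)\}$ and $R=[n]\setminus V$. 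Conditionally on $V$, the value $\pi_n(m)$ is uniform on $R$ and independent of the arrangement of $V$ in positions $m+1,\ldots,n$, hence of $X_i$. Therefore
\begin{align*}
  \P\bigl[X_{i+1}=k+1\mid X_i=k\bigr]=\frac{1}{m}\,\E\bigl[\,\abs{R\cap(\{m,\ldots,m+k\}\setminus\{1\})}\,\bigm|\,X_i=k\,\bigr]=\frac{1}{m}\sum_{v}\P[v\in R\mid X_i=k],
\end{align*}
the sum running over the at most $k+1$ values $v\in\{m,\ldots,m+k\}\cap[n]$ with $v\neq1$.

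The crux is then $\P[v\in R\mid X_i=k]\leqslant m/(n-2k)$ for each such $v$; writing $x=\P[v\in R,\,X_i=k]$ and $y=\P[v\notin R,\,X_i=k]$ this is equivalent to $(n-2k-m)x\leqslant my$, which I would prove by an injection. Fix a permutation $\sigma$ with $X_i(\sigma)=k$ and with $v$ at some position $p\leqslant m$, and call $q\in\{m+1,\ldots,n\}$ \emph{available} if interchanging the values at positions $p$ and $q$ leaves $X_i$ unchanged. Such a swap only alters the value at position $q$ among positions $>m$, so $X_i$ is preserved precisely when the $\B$-status of $q$ is unchanged; and the $\B$-status of $q$ can change only if $q$ is one of the $k$ current $\B$-positions, or $q\in\{m+1,\ldots,m+k\}$ is currently a non-$\B$ position --- a non-$\B$ position $q>m+k$ receives the value $v\leqslant m+k<q$ and stays non-$\B$, so no cascade reaches positions $\ell$ with $m<\ell<q$. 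Hence at least $n-m-2k$ positions are available. The map $(\sigma,q)\mapsto(\sigma\circ(p\,q),\,p)$, over available $q$, is injective (recover $q$ as the position of $v$ in $\sigma\circ(p\,q)$, then recover $\sigma$), and its image lies in the set of pairs $(\sigma',p')$ with $v$ at a position $>m$ in $\sigma'$, with $X_i(\sigma')=k$, and with $p'\leqslant m$, a set of size $m\cdot\#\{\sigma':v\text{ to the right},\ X_i=k\}$. Counting yields $(n-m-2k)\#\{\sigma:v\text{ to the left},\ X_i=k\}\leqslant m\#\{\sigma':v\text{ to the right},\ X_i=k\}$, i.e.\ $(n-2k-m)x\leqslant my$ (immediate when $n-m-2k<0$). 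Then $\P[v\in R\mid X_i=k]=x/(x+y)\leqslant m/(n-2k)$, and summing over the $\leqslant k+1$ values $v$ gives $\P[X_{i+1}=k+1\mid X_i=k]\leqslant(k+1)/(n-2k)$.

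The step I expect to take the most care is the bookkeeping in the swapping argument: one must argue precisely that the only positions to the right of $m$ whose $\B$-status a single swap can disturb are the $k$ current $\B$-positions and the $k$ positions $m+1,\ldots,m+k$ (where inserting the small value $v$ could create a new abstracted fixed point), so that no cascade propagates further left and at least $n-m-2k$ positions stay available.
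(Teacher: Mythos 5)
Your proof is correct and uses the same core technique as the paper: a combinatorial switching argument showing that, given $X_i=k$, all but about $2k$ swaps (the $k$ current $\B$-positions plus the $\leqslant k$ near positions where a small value could create a new $\B$-member) preserve $X_i$, yielding the $n-2k$ denominator via an injection. The only difference is bookkeeping: the paper partitions the event $\{X_i=k\}$ by the value $\pi_n(n-i)=n-i+\ell$ into sets $\mathfrak{T}_\ell$ and shows directly that $\abs{\mathfrak{U}}\geqslant(n-2k)\abs{\mathfrak{T}_\ell}$, whereas you first reduce to the placement probability $\P[v\in R\mid X_i=k]$ for each target value $v\in\{m,\ldots,m+k\}$ and bound that by the analogous swap, but the heart of the argument is identical.
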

  \begin{proof}
    For a given permutation $\sigma\in \mathfrak{S}_n$, let $x_i(\sigma)=\#\{j > n-i \mid \sigma(j)\in\B \}$,
    so that $X_i=x_i(\pi_n)$. Taking $i$ and $k$ to be fixed, define
    \begin{align*}
      \mathfrak{T}_\ell &= \{\sigma\in\mathfrak{S}_n\mid x_i(\sigma)=k,\, \sigma(n-i)=n-i+\ell \},\\
      \mathfrak{U} &= \{\sigma\in\mathfrak{S}_n\mid x_i(\sigma)=k\}.
    \end{align*}
    By Lemma~\ref{lem:right.to.left}, the process $x_i(\sigma)$ increases in its next step
    if and only if $0\leqslant \sigma(n-i)-(n-i)\leqslant x_i(\sigma)$ and $\sigma(n-i)\neq 1$, showing that
    \begin{align}\label{eq:increase.prob}
      \P[X_{i+1}=X_i+1 \mid X_i=k] \leqslant \frac{\sum_{\ell=0}^k\abs{\mathfrak{T}_\ell}}{\abs{\mathfrak{U}}}.
    \end{align}
    Now, we compare the sizes of $\mathfrak{T}_\ell$ and $\mathfrak{U}$ by a combinatorial switching argument.
    Suppose that $\sigma\in\mathfrak{T}_\ell$. For any $j\in[n]$, let
    $\sigma^j=(\sigma(n-i), \sigma(j))\circ\sigma$, the permutation given by swapping the values
    at positions~$n-i$ and $j$ in $\sigma$.
    Given $\sigma^j$, $i$, $\ell$, and $n$ but without $\sigma$ or $j$, we can recover
    $\sigma$ by the formula $\sigma=(n-i+\ell,\sigma^j(n-i))\circ\sigma^j$ and $j$ by
    $j=(\sigma^j)^{-1}(n-i+\ell)$. This shows that the map from $\mathfrak{T}_\ell\times[n]\to\mathfrak{S}_n$
    given by $(\sigma, j)\mapsto \sigma^j$ is injective.

    We claim that for any $\sigma\in\mathfrak{T}_\ell$, the
    permutation~$\sigma^j$ falls in $\mathfrak{U}$ if either of the following holds:
    \begin{enumerate}[label = (\roman*)]
      \item $j\leqslant n-i$; \label{i1}
      \item $j> n-i+\ell$ and $\sigma(j)\notin\B(\sigma)$.\label{i2}
    \end{enumerate}
    Indeed, in the first case, $\sigma$ and $\sigma^j$ differ only at locations $n-i$ and smaller.
    Since $x_i(\sigma)$ is determined by $\sigma(n-i+1),\ldots,\sigma(n)$,
    we have $x_i(\sigma^j)=x_i(\sigma)=k$, and hence $\sigma^j\in\mathfrak{U}$.
    In the second case, the only way for $x_i(\sigma^j)$ to be different from $x_i(\sigma)$ is if
    $\sigma^j(j)\in\B$. But this cannot occur, since
    $\sigma^j(j)=\sigma(n-i)\leq n-i+\ell$ and $j>n-i+\ell$.

    For $\sigma\in\mathfrak{T}_\ell$, there are $n-i$ choices of $j$ satisfying \ref{i1}.
    There are another $i-\ell$ choices
    of $j$ satisfying $j>n-i+\ell$; at most $x_{i-\ell}(\sigma)\leqslant x_i(\sigma)=k$
    of these have $\sigma(j)\in\B(\sigma)$,
    giving us at least $i-\ell-k$ choices of $j$ satisfying \ref{i2}. Thus, for each $\sigma\in\mathfrak{T}_\ell$,
    there are at least $n-\ell-k$ choices of $j$ for which $\sigma^j\in\mathfrak{U}$. By injectivity
    of the map $(\sigma,j)\mapsto\sigma^j$,
    \begin{align*}
      \abs{\mathfrak{U}} \geq (n-\ell-k)\abs{\mathfrak{T}_\ell} \geq (n-2k)\abs{\mathfrak{T}_\ell}
    \end{align*}
    for $\ell\leq k$. Substituting this into~\eqref{eq:increase.prob} gives~\eqref{eq:discrete.yule}.
  \end{proof}

  Now, we couple $X_i$ with a true Yule process,
  whose marginal distributions we know to be exactly geometrically distributed.
  The only complication is that we lose control of $(X_i)$ if it becomes too large,
  which we deal with by considering it only up to a stopping time.
  \begin{lemma}\label{lem:yule.coupling}
    Let $Y_t$ be a Yule process starting from $1$.
    Let $S=\min\{i\mid X_i\geqslant n/4\}$, taking $S=n$ if the minimum is over the empty set.
    The processes $(X_i)_{0\leqslant i \leqslant n}$ and $(Y_t)_{0\leqslant t\leqslant 4}$
    can be coupled so that
    \begin{align*}
      1 + X_i \leqslant Y_{4i/n}
    \end{align*}
    for all $i\leqslant S$.
  \end{lemma}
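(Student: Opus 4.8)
The plan is to build the coupling inductively on $i$, interlacing the discrete jumps of $(X_i)$ with the continuous-time jumps of the Yule process $(Y_t)$, and to use Lemma~\ref{lem:discrete.yule} to control the jump probabilities. The target inequality $1+X_i \leqslant Y_{4i/n}$ suggests reparametrizing time so that one step of $(X_i)$ corresponds to $4/n$ units of Yule time. First I would recall the structure of a Yule process: if it is at state~$m$, the waiting time to the next jump is $\Exp(m)$, so the probability of at least one jump in an interval of length~$4/n$ from state~$m$ is $1-e^{-4m/n}$. Using the elementary bound $1 - e^{-t}\geqslant t - t^2/2 \geqslant t(1 - t/2)$, this probability is at least $\frac{4m}{n}\bigl(1 - \frac{2m}{n}\bigr)$. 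On the event $i\leqslant S$ we have $X_i < n/4$, so $1+X_i \leqslant n/4$; if the coupling has maintained $1+X_i\leqslant Y_{4i/n}=:m$, then $m\leqslant$ (something we must keep below $n/2$ or so), and comparing $\frac{4m}{n}\bigl(1-\frac{2m}{n}\bigr)$ against the bound $\frac{1+X_i}{n-2X_i}$ from Lemma~\ref{lem:discrete.yule} should show that a Yule jump in the next time slot is \emph{more} likely than an increase of $(X_i)$.

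The construction itself: I would use the standard ``coupling via a common uniform'' device. At step $i$, having already coupled so that $1+X_i\leqslant Y_{4i/n}$ and assuming $i<S$, draw a uniform random variable $U_{i+1}$ on $[0,1]$; declare that $X_{i+1}=X_i+1$ iff $U_{i+1}\leqslant \P[X_{i+1}=X_i+1\mid \mathcal{F}_i]$ (using the actual conditional increase probability, which by Lemma~\ref{lem:discrete.yule} is at most $\frac{1+X_i}{n-2X_i}$), and independently run the Yule process on the time interval $[4i/n, 4(i+1)/n]$ in such a way that it jumps at least once iff $U_{i+1}\leqslant 1-e^{-4Y_{4i/n}/n}$ (this is where I use that the Yule jump event on a slot, given the starting state, is a single Bernoulli event with known probability, and I can couple that Bernoulli to the \emph{same} $U_{i+1}$). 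Since the true increase probability of $X$ is at most $\frac{1+X_i}{n-2X_i}$, which I will have shown is at most $1-e^{-4Y_{4i/n}/n}$, whenever $X$ increases so does $Y$ (at least once), and $Y$ may additionally increase when $X$ does not. Hence the gap $Y_{4i/n} - (1+X_i)$ is nondecreasing in $i$ as long as $i\leqslant S$, so the inequality $1+X_i\leqslant Y_{4i/n}$ propagates. For $i$ in the range $S<i\leqslant n$ there is nothing to prove. One must also handle the base case $i=0$: $1+X_0 = 1 = Y_0$, so it holds with equality.

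The main obstacle is the inequality $\frac{1+X_i}{n-2X_i} \leqslant 1 - e^{-4Y_{4i/n}/n}$ on the event $i\leqslant S$. On that event $X_i\leqslant n/4$ so $n-2X_i\geqslant n/2$; and inductively $Y_{4i/n}\geqslant 1+X_i$. Writing $m = Y_{4i/n}$, I need $\frac{1+X_i}{n-2X_i}\leqslant 1-e^{-4m/n}$, and since $1+X_i\leqslant m$ and $n-2X_i\geqslant n/2$ it suffices that $\frac{2m}{n}\leqslant 1-e^{-4m/n}$. But this is \emph{not} true for all $m$ — it fails when $4m/n$ is large — so the argument only works while $m=Y_{4i/n}$ stays bounded by a constant multiple of $n$, and I must track that $Y_{4i/n}$ cannot have outrun this bound before time $S$. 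The cleanest fix is to stop the Yule process as well, or to observe that as long as the gap inequality has held up to step~$i\leqslant S$ we have $Y_{4i/n} \leqslant n/4 + (\text{total Yule jumps so far})$, and to get an a priori bound on the latter — alternatively, one checks $\frac{2m}{n}\leqslant 1-e^{-4m/n}$ holds whenever $m\leqslant cn$ for a suitable absolute constant $c$ (numerically $4m/n \lesssim 1.5936$ works, i.e. $m\lesssim 0.398\,n$), and then argues that on $i\leqslant S$ the Yule value stays below $cn$ with the required probability or deterministically by a further stopping-time truncation. Resolving this range issue carefully — deciding exactly how to stop or bound $Y$ so the elementary inequality stays valid throughout $[0,S]$ — is the delicate point; everything else is the routine uniform-coupling bookkeeping sketched above.
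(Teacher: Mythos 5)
Your overall strategy (time rescaling by a factor $4/n$, inductive construction of the Yule process, a common-uniform coupling on each slot, and Lemma~\ref{lem:discrete.yule} as the engine) is exactly the paper's approach. But your inequality chase goes off the rails at the crucial step and creates a spurious difficulty. You need to show
\[
  \P[X_{i+1}=X_i+1 \mid X_i] \ \leqslant\ 1 - e^{-4Y_{4i/n}/n}
\]
on the event $i<S$. Starting from Lemma~\ref{lem:discrete.yule} and $X_i<n/4$, the left side is at most $\frac{1+X_i}{n-2X_i}\leqslant\frac{2(1+X_i)}{n}$. At this point you substituted $1+X_i\leqslant m:=Y_{4i/n}$ into the \emph{numerator of the left side}, arriving at the sufficient condition $\frac{2m}{n}\leqslant 1-e^{-4m/n}$; but enlarging the left-hand side in this way only makes the inequality harder to prove, and indeed it fails for $m$ near $n/2$. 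You then correctly diagnose that this is the sticking point and propose patches (stopping $Y$, bounding total Yule jumps). None of that is needed.

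The clean route, which the paper takes, is to apply the elementary bound $1-e^{-x}\geqslant x/2$ \emph{at the value $x=4(1+X_i)/n$}, which lies in $[0,1]$ precisely because $i<S$ forces $X_i<n/4$. This gives $\frac{2(1+X_i)}{n}\leqslant 1-e^{-4(1+X_i)/n}$. Then, and only then, invoke the inductive hypothesis $1+X_i\leqslant Y_{4i/n}$ together with monotonicity of $x\mapsto 1-e^{-x}$ to pass to $1-e^{-4Y_{4i/n}/n}$. In other words, the bound $Y_{4i/n}\geqslant 1+X_i$ should be used to \emph{enlarge the right-hand side}, not to enlarge the left. With that reordering, no truncation of the Yule process is required, and the stopping time $S$ already gives you all the control you need. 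The remaining bookkeeping in your sketch (base case $1+X_0=1=Y_0$, common-uniform coupling so that $X$ steps up only when $Y$ does, and the observation that nothing needs to be shown for $i>S$) is correct.
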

  \begin{proof}
    Essentially, we just need to confirm that $X_i$ is less likely to increase from time~$i$ to $i+1$
    than $Y$ is from time~$4i/n$ to $4(i+1)/n$.
    Formally, we start with $(X_i,\,i\in [n])$ and then build up the Yule process inductively on the same
    probability space.
    Since a Yule process is Markov, we can construct $(Y_t)$ up to some time~$t_0$ and then
    extend it by attaching to its end a new Yule process starting at $Y_{t_0}$.

    Assume for some $j\in\{0,\ldots,n\}$ that we have already constructed
    a Yule process $(Y_t)_{0\leqslant t\leqslant 4j/n}$ so that
    $1+X_i\leqslant Y_{4i/n}$ for $i\leqslant\min(j,S)$. Note that this is trivial
    in the starting case $j=0$. We want to show that if $S>j$, then we can extend $(Y_t)$ up to time
    $4(j+1)/n$ so that $1+X_{j+1}\leqslant Y_{4(j+1)/n}$. (If $S\leqslant j$, then we can just extend $(Y_t)$ up to
    time $4(j+1)/n$ independently from $X_{j+1}$, since the relationship between the two is irrelevant.)

    So, we assume $S>j$, which implies that $X_j<n/4$.
    According to Lemma~\ref{lem:discrete.yule},
    \begin{align*}
      \P[X_{j+1}=X_j+1\mid X_j] \leqslant \frac{1+X_j}{n-2X_j}\leqslant \frac{2(1+X_j)}{n}
    \end{align*}
    under this assumption.
    The next increase of a Yule process at $Y_{4j/n}$ arrives at rate~$Y_{4j/n}$.
    Thus, conditional on $Y_{4j/n}$, the probability that a Yule process increases from $Y_{4j/n}$
    to $Y_{4j/n}+1$ within time~$4/n$ is
    \begin{align*}
      1 - \exp\biggl(-\frac{4Y_{4j/n}}{n}\biggr) \geqslant 1 - \exp\biggl(-\frac{4(1+X_j)}{n}\biggr)
        \geqslant \frac{2(1+X_j)}{n}.
    \end{align*}
    The first inequality uses the inductive hypothesis that $1+X_j\leqslant Y_{4j/n}$, and the second
    uses the inequality $1-e^{-x}\geqslant x/2$, which holds for $0\leqslant x\leqslant 1$.
    Thus, conditional on $X_j$ and $Y_{4j/n}$,
    a Yule process starting at $Y_{4j/n}$ is more likely to increase in time $4/n$
    than is the process $(X_i)$ from time $j$ to $j+1$. Thus, conditionally on $X_j$
    and $Y_{4j/n}$, we can couple a Yule process
    starting at $Y_{4j/n}$ of duration $4/n$ with $(X_i)$ so that it increases only if $(X_i)$
    does from $j$ to $j+1$. Tacking this Yule process onto the end of $(Y_t)$, we have extended
    our coupling as desired.
  \end{proof}

  \begin{proof}[Proof of Proposition~\ref{prop:bsize}]
    Recall that $X_n=\abs{\B}$.
    Couple $(X_i)$ and $(Y_t)$ according to Lemma~\ref{lem:yule.coupling}.
    Fix $k\leqslant n/4$, and let $S'=\min\{i\mid X_i\geqslant k\}$, taking $S'=n$ if the minimum is over
    the empty set. Since $S'\leqslant S$, we have $1+X_{S'}\leqslant Y_{4S'/n}$.

    We claim that if $X_n\geqslant k$, then $Y_4\geqslant 1+k$. To see this, observe that if
    $X_n\geqslant k$, then $X_{S'}\geqslant k$. Thus
    \begin{align*}
      Y_4\geqslant Y_{4S'/n}\geqslant 1+X_{S'}\geqslant 1+k.
    \end{align*}
    This demonstrates that
    \begin{align}\label{eq:smallk}
      \P[X_n\geqslant k] \leqslant \P[Y_4-1\geqslant k] = (1-e^{-4})^k\leqslant \exp\bigl( -e^{-4}k \bigr)
    \end{align}
    since $Y_t-1$ is distributed as $\Geo\bigl(e^{-t}\bigr)$ (see \cite[Section~4.1.D]{KT}, for example).

    For $n/4<k\leqslant n$,
    \begin{align}\label{eq:largek}
      \P[X_n \geqslant k] \leqslant \P[X_n\geqslant n/4] \leqslant\exp\biggl( -\frac{e^{-4}n}{4}\biggr)
      \leqslant \exp\biggl( -\frac{e^{-4}k}{4}\biggr).
    \end{align}
    The right hand side of \eqref{eq:largek} is larger than that of \eqref{eq:smallk}.
    As $X_n$ does not take values larger than $n$, this shows that
    \eqref{eq:largek} holds for all $k\geqslant 0$.
  \end{proof}

  Next, we prove a subexponential tail bound on the size of $\B_x$ for any fixed $x$.

  \begin{lemma}
  \label{bxsize}
  For $x>0$ and $t\leqslant n-x$,
  \begin{equation} \label{eq:bxsize}
    \P[ \abs{\B_x} \geqslant t] \leqslant {t+x \choose t} \prod_{i = 1}^t \left(\frac{x}{n-i}\right)
       \leqslant \biggl(\frac{ex}{t}\biggr)^t.
  \end{equation}
  \end{lemma}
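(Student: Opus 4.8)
My plan is to prove \eqref{eq:bxsize} by a union bound over the possible ``small'' members of $\B$, estimating the relevant probabilities via Lemma~\ref{lem:right.to.left} and using Proposition~\ref{prop:bsize} to dispose of the rare event that $\B$ is large. First I would record the elementary fact that $\B_x$ is always an \emph{initial segment} of $\B$: since $b_1<b_2<\cdots$ are distinct integers, $b_i-i$ is weakly increasing, so $\{i:b_i-i<x\}=\{1,\ldots,|\B_x|\}$. Hence $|\B_x|\geqslant t$ forces $b_1,\ldots,b_t$ to exist with $b_i-i<x$ for every $i\leqslant t$; writing $v_i=b_i$ gives distinct values $2\leqslant v_1<\cdots<v_t$ in $\B$ with $v_i<i+x$. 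The number of such admissible tuples is easily counted: setting $w_i=v_i-i$ turns the constraints into $1\leqslant w_1\leqslant\cdots\leqslant w_t<x$, so there are at most $\binom{t+\lceil x\rceil-2}{t}\leqslant\binom{t+x}{t}$ of them. Thus $\P[\abs{\B_x}\geqslant t]\leqslant\binom{t+x}{t}\cdot\max\,\P[v_1,\ldots,v_t\in\B]$, and it remains to bound this last probability, for a fixed admissible tuple, by $\prod_{i=1}^t\frac{x}{n-i}$.

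To estimate $\P[v_1,\ldots,v_t\in\B]$ I would reveal $\pi_n$ one position at a time from right to left, maintaining $\B$ as Lemma~\ref{lem:right.to.left} prescribes. The key structural fact is that in this construction the $j$-th element of $\B$ to be added sits at its position $p$ with exactly $j-1$ elements of $\B$ lying to the right of $p$, so by Lemma~\ref{lem:right.to.left} its value is $p+\ell$ for some $0\leqslant\ell\leqslant j-1$; this is the finite analogue of scanning $\ppp[\rho]{0},\ldots,\ppp[\rho]{j-1}$ in Proposition~\ref{theorem.longest}. Intersecting this window of admissible values with the requirement that the value be one of the target values $v_1,\ldots,v_t$ (each $<t+x$) confines the value at each ``small'' addition to a narrow window of integers, out of the roughly $n$ values still available at that step, and multiplying the resulting conditional probabilities over the $t$ small additions produces a bound of the shape $\prod_{i=1}^t\frac{x}{n-i}$, the $i$-th factor coming from the $i$-th addition. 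Where the a priori window coming from $j-1$ exceeds $x$ — which happens exactly when $\B$ has already accumulated many elements — I would split on the value of $\abs{\B}$ and use the exponential tail $\P[\abs{\B}\geqslant k]\leqslant C^k$ of Proposition~\ref{prop:bsize} to absorb that part of the sum.

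The main obstacle is precisely this window estimate: a priori the value of a newly added element of $\B$ ranges over as many positions as the current size of $\B$, which is not bounded by $x$, so one must genuinely argue that insisting the added value be small (hence eligible for $\B_x$) narrows the window to size $O(x)$. I expect to handle this by separating the members of $\B$ below and above the threshold $\lceil t+x\rceil-1$ and bookkeeping carefully how many of each kind lie to the right of a given position when the right-to-left scan reaches it; the constraint $v_i<i+x$ together with the fact that the small members occupy distinct positions below the threshold is what ultimately pins the effective window size. Once the first inequality in \eqref{eq:bxsize} is established, the second is routine: $\binom{t+x}{t}\leqslant\bigl(e(t+x)/t\bigr)^t$, $\prod_{i=1}^t\frac{x}{n-i}\leqslant\bigl(x/(n-t)\bigr)^t$, and the hypothesis $t\leqslant n-x$ together with $t!\geqslant(t/e)^t$ let one reorganize $\binom{t+x}{t}\prod_{i=1}^t\frac{x}{n-i}=\prod_{i=1}^t\frac{x(x+i)}{i(n-i)}$ into the claimed $(ex/t)^t$.
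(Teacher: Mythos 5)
Your union-bound setup in the first paragraph (the initial-segment observation, the count of admissible tuples at $\binom{t+x}{t}$, the reduction to bounding $\P[v_1,\ldots,v_t\in\B]$ over a fixed admissible tuple) is exactly the paper's. The problem is in the second step. You correctly note that in a right-to-left scan tracking $\B$ the $j$-th element added has an admissible window of size $j$, not $O(x)$, and you flag this as the ``main obstacle.'' The patch you propose --- split on $|\B|$ and absorb the large-$|\B|$ cases via the exponential tail of Proposition~\ref{prop:bsize} --- is not needed, and it would not cleanly yield the exact product $\prod_{i=1}^t\frac{x}{n-i}$ rather than a lossier bound.

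The missing, much simpler observation is the one-sided part of Lemma~\ref{lem:right.to.left}: if $c\in\B$ then, regardless of the rest of the permutation, its position $p=\pi_n^{-1}(c)$ satisfies $0\leqslant c-p$, i.e.\ $p\leqslant c$. This requires no scanning and no knowledge of how many elements $\B$ has accumulated so far. For an admissible tuple $v_1<\cdots<v_t$ with $v_i<x+i$, it immediately forces $\pi_n^{-1}(v_i)<x+i$ for every $i$. That inclusion reduces the problem to a pure question about where $t$ specified values land in a uniformly random permutation: revealing $\pi_n^{-1}(v_1),\ldots,\pi_n^{-1}(v_t)$ in turn, at the $i$-th step there are $n-i+1$ positions left and at most $x$ of those below $x+i$ remain unoccupied, giving $\prod_{i=1}^t\frac{x}{n-i+1}\leqslant\prod_{i=1}^t\frac{x}{n-i}$. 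No window estimate depending on $|\B|$, no invocation of Proposition~\ref{prop:bsize}, and the combinatorics of $\B$ enter only through the single inequality $p\leqslant c$. Your closing derivation of the second inequality in \eqref{eq:bxsize} is fine and matches the paper's.
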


  \begin{proof}
  In order for each of the first $t$ values in $\B$ to satisfy $b_i - i < x$, we must have that all of the values
  are less than $t+x$.  For a specific choice of letters $1<c_1<\cdots<c_t$, let $E(c_1,\ldots,c_t)$ denote the
  event that the first $t$ values of $\B$ are $c_1, \ldots, c_t.$  If $|\B_x| \geqslant t$ then $E(c_1,\ldots, c_2)$
  holds for one of at most ${t +x \choose t}$ possible choices of $(c_1,\ldots,c_t).$

  For a particular choice of letters $c_1<\cdots<c_t<t+x$,
  \[
    \P[E(c_1,\ldots,c_t)] \leq \P\left[ \bigcap_{i=1}^t \{\pi_n^{-1}(c_i)< x+ i\}\right]= \prod_{i=1}^t \frac{ x }{n-i+1}.
  \]

  To prove the final bound in \eqref{eq:bxsize}, we apply the union bound and evaluate
  \begin{align*}
    \binom{t+x}{t} \prod_{i = 1}^t \left(\frac{x}{n-i+1}\right)
      &= \frac{x^t  (x+t)\cdots(x+1)}{t! \;n\cdots(n-t+1)}\leqslant \frac{x^t}{t!},
  \end{align*}
  since $x+t\leqslant n$ under the assumptions of the lemma, and then we apply the bound
  $t!\geqslant (t/e)^t$.
 \end{proof}

 Proposition~\ref{prop:bsize} and Lemma~\ref{bxsize} now combine to bound $\E L_n^p$.

 \begin{proposition} \label{longpathmoment}
   For any $p>0$, it holds that $\sup_n \E L_n^p < \infty$.
 \end{proposition}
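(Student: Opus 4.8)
The plan is to combine the deterministic estimate \eqref{simpleLUB} with the tail bounds for $\abs{\B}$ and $\abs{\B_x}$ proved in Proposition~\ref{prop:bsize} and Lemma~\ref{bxsize}. Fix $p>0$. Because the constant $C$ of Proposition~\ref{prop:bsize} is strictly less than $1$, one can choose $x=x(p)$ large enough that $r:=(1+1/x)^{2p}C<1$, and this is the only use made of the freedom in $x$. Raising \eqref{simpleLUB} to the $p$-th power,
\begin{align*}
  L_n^p \leqslant 2^{p\abs{\B_x}}\,\abs{\B}^{p}\,(1+1/x)^{p\abs{\B}},
\end{align*}
so by the Cauchy--Schwarz inequality
\begin{align*}
  \E L_n^p \leqslant \Bigl(\E\bigl[2^{2p\abs{\B_x}}\bigr]\Bigr)^{1/2}
    \Bigl(\E\bigl[\abs{\B}^{2p}(1+1/x)^{2p\abs{\B}}\bigr]\Bigr)^{1/2},
\end{align*}
and it remains to bound each factor uniformly in $n$.

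For the second factor I would use the bound $\P[\abs{\B}=k]\leqslant\P[\abs{\B}\geqslant k]\leqslant C^k$ from Proposition~\ref{prop:bsize}, which is valid for every $n$, to get
\begin{align*}
  \E\bigl[\abs{\B}^{2p}(1+1/x)^{2p\abs{\B}}\bigr]
    \leqslant \sum_{k\geqslant 0}k^{2p}\bigl((1+1/x)^{2p}C\bigr)^{k}=\sum_{k\geqslant 0}k^{2p}r^{k},
\end{align*}
which is a finite constant depending only on $p$ and $x$ because $r<1$.

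For the first factor I would write $\E\bigl[2^{2p\abs{\B_x}}\bigr]\leqslant\sum_{t\geqslant 0}2^{2pt}\P[\abs{\B_x}\geqslant t]$ and split the sum at $t=n-x$. For $t\leqslant n-x$, Lemma~\ref{bxsize} gives $\P[\abs{\B_x}\geqslant t]\leqslant(ex/t)^t$, and $\sum_{t\geqslant 1}2^{2pt}(ex/t)^t=\sum_{t\geqslant 1}(2^{2p}ex/t)^t$ converges to a constant depending only on $p$ and $x$, since the summand is eventually at most $2^{-t}$. For $t>n-x$, monotonicity of tail probabilities and Lemma~\ref{bxsize} applied at $t=n-x$ give $\P[\abs{\B_x}\geqslant t]\leqslant\P[\abs{\B_x}\geqslant n-x]\leqslant\bigl(ex/(n-x)\bigr)^{n-x}$, so the contribution of these terms is at most $x\,2^{2pn}\bigl(ex/(n-x)\bigr)^{n-x}$, which tends to $0$ as $n\to\infty$ because the super-exponentially small factor $\bigl(ex/(n-x)\bigr)^{n-x}$ overwhelms $2^{2pn}$. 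Thus $\E\bigl[2^{2p\abs{\B_x}}\bigr]$ is bounded over all $n$, the finitely many values $n\leqslant x$ being handled separately since there $\E L_n^p$ is trivially finite (indeed $L_n\leqslant n2^{n}$). Combining the two uniform bounds with the displayed Cauchy--Schwarz inequality gives $\sup_n\E L_n^p<\infty$.

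The main obstacle is the dependence between $\abs{\B_x}$ and $\abs{\B}$: the crude consequence $L_n\leqslant\abs{\B}2^{\abs{\B}}$ of Lemma~\ref{LUB} is too weak to survive integration against only the exponential tail $C^k$ once $p$ is large, so one must exploit that the ``expensive'' factors of $2$ in \eqref{simpleLUB} come solely from the few indices counted by $\abs{\B_x}$, whose tail is super-exponential. Cauchy--Schwarz decouples the two quantities, at the price of requiring $x$ large enough that the residual factor $(1+1/x)^{2p}$ does not destroy the geometric summability supplied by Proposition~\ref{prop:bsize}; after that the remaining estimates are routine.
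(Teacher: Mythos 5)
Your proof is correct and follows essentially the same route as the paper's: raise \eqref{simpleLUB} to the $p$-th power, decouple $\abs{\B_x}$ from $\abs{\B}$ via Cauchy--Schwarz, control the $\abs{\B}$ factor by the geometric tail of Proposition~\ref{prop:bsize} after choosing $x$ so that $(1+1/x)^{2p}C<1$, and control the $\abs{\B_x}$ factor by the super-exponential tail of Lemma~\ref{bxsize}, splitting off the range $t>n-x$ which vanishes as $n\to\infty$. The only differences are cosmetic bookkeeping in the tail estimate (you pick up a harmless extra factor of $x$ there, and you explicitly note the trivial case $n\leqslant x$), so there is nothing to flag.
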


 \begin{proof}
   Fix $p>0$, and choose $x$ large enough that $(1+\frac{1}{x})^{2p}<C^{-1}$
   for the constant~$C$ from Proposition~\ref{prop:bsize}.
   By~\eqref{simpleLUB} and the Cauchy--Schwarz inequality,
   \begin{align}\label{eq:cs}
     \E L_n^p \leqslant \E\biggl[ 2^{2p\abs{\B_x}} \biggr]^{1/2}
             \E\Biggl[ \abs{\B}^{2p}\biggl(1 + \frac{1}{x}\biggr)^{2p\abs{\B}} \Biggr]^{1/2}.
   \end{align}
   By Proposition~\ref{prop:bsize},
   \begin{align}\label{eq:exterm}
     \E\Biggl[ \abs{\B}^{2p}\biggl(1 + \frac{1}{x}\biggr)^{2p\abs{\B}} \Biggr]
     &\leqslant \sum_{t=0}^\infty C^tt^{2p}\biggl(1 + \frac{1}{x}\biggr)^{2pt},
   \end{align}
   which is summable.
   Similarly, applying Lemma~\ref{bxsize} and using $\abs{\B_x}\leqslant n$,
   \begin{align*}
     \E\Bigl[ 2^{2p\abs{\B_x}} \Bigr]
       &\leqslant \sum_{t=0}^{n-x}\P\bigl[\abs{\B_x}=t\bigr]2^{2pt} + \P\bigl[\abs{\B_x}\geqslant n-x\bigr]2^{2pn}\\
       &\leqslant \sum_{t=0}^{\infty}\biggl(\frac{ex}{t}\biggr)^t2^{2pt} +
          \biggl(\frac{ex}{n-x}\biggr)^{n-x}2^{2pn}.
   \end{align*}
   The first term is finite, and the last term vanishes as $n\to\infty$.
   Together with \eqref{eq:exterm}, this yields an upper bound on \eqref{eq:cs}
   with no dependence on $n$.
 \end{proof}

 \section{Further Questions}\label{sec:further}

  Many of the open questions in~\cite{McKinley} about the global properties of the fixed point forest $F_n$
  remain unanswered. Recall that each base of $F_n$ is a permutation with $1$ as a fixed
  point. Let $T_n$ denote the tree in $F_n$ with the identity permutation as its base.
  In fact, $T_n$ is the largest tree in $F_n$, which can be seen as follows. Let $\pi$ be any base with
  $\pi(1) = 1.$  Let $i$ be the largest index of such that $\pi(i) \neq i.$  Then $\pi(i)$ is never bumped, so
  switching $\pi(i)$ and $\pi(1)$ creates a new permutation $\pi'$ with subtree at least as big as the tree with base $\pi$.
  Note that $\pi'\in T_n$, since all letters after $1$ are fixed points. Hence an isomorphic copy of the tree starting from
  $\pi$ is also in $T_n$.

  In~\cite[Propositions~24 and~25]{McKinley}, it is shown that for a uniformly random permutation $\pi_n$,
  \begin{align*}
    \frac{1}{n}\leqslant \P[\pi_n\in T_n]\leqslant \frac{e}{n},
  \end{align*}
  and it is conjectured that $\P[\pi_n\in T_n]\sim \frac{1}{n}$. We highlight this question here and tack
  on a few more of our own:
  \begin{question}
    Prove that $n\P[\pi_n\in T_n]$ converges as $n\to\infty$ and determine its limit. Characterize all permutations
    in $T_n$. How do the next largest components compare in size to it?
  \end{question}
  \noindent Another question posed in \cite{McKinley} is:
  \begin{question}
    Let $R_n$ be the distance from $\pi_n$ to the base of its tree in the fixed point forest.
    What are the limiting asymptotics of $\E R_n$?
  \end{question}
  \noindent One could also ask about the limiting fluctuations of $R_n$ from its mean.

  Though some of our work in Section~\ref{sec:combinatorics distributions} could be helpful in addressing
  these questions, our limit tree has nothing to say about them.
  For example, the limiting tree has no base at all, reflecting that questions about the distance from
  $\pi_n$ to a base are not local. One could instead look for
  a different limit of the fixed point forest where edges
  are scaled so that the diameter of each component of the forest stays bounded, along the lines of
  the continuum random tree (see \cite{Aldous.1991}).

  The present work has created other avenues to explore. Continuing in the same
  theme as Section~\ref{sec:distributions}, what can we say about
  paths from root to leaf in the limiting tree besides the longest and shortest ones? In particular,
  \begin{question}
    Walk from the root towards the leaves in the limiting tree by choosing randomly among all children
    at each step. What is the distribution of the number of steps before reaching a leaf?
  \end{question}
  \noindent
  Other properties of the tree are interesting as well:
  \begin{question}
    Is simple random walk on the limiting tree transient or recurrent?
    What is the branching number (see \cite[Section~1.2]{LyonsPeres}) of the tree?
  \end{question}
  \noindent
  A random walk on the nonlimit fixed point forest can be interpreted as a stochastic version of
  the bumping process where we randomly bump and unbump letters of a permutation. A solution to the above
  problem would likely give information on how quickly this process moves from the starting permutation.

  Another problem is to determine what happens when the root of the fixed point forest is chosen
  nonuniformly:
  \begin{question}
    Determine the local limit of the fixed point forest when the root is sampled from the
    Ewens or Mallows distributions.
  \end{question}
  \noindent The fixed points of Mallows distributions are not distributed evenly over the permutation,
  so we would expect convergence to a limiting tree defined by Poisson point processes
  of nonuniform intensity.





\ACKNO{We would like to thank David Aldous, Persi Diaconis, Gwen McKinley, and Graham White for discussions.
This work benefited from computations in \textsc{SageMath}~\cite{combinat,sage}.}


\end{document}